\numberwithin{theorem}{section}
\newcommand{\f}[2]{\frac{#1}{#2}}
\newcommand{\pd}[2]{\f{\partial #1}{\partial #2}}
\renewcommand{\d}[2]{\frac{d #1}{d #2}} 
\title{\vspace{18pt} Hausdorff Continuity of Region
  of Attraction Boundary Under Parameter Variation with
  Application to Disturbance Recovery}
\author{Michael W. Fisher and Ian A. Hiskens}
\begin{document}

\maketitle

\begin{abstract}
Consider a parameter dependent vector field on either Euclidean space or a
compact Riemannian manifold.
Suppose that it possesses a parameter dependent initial condition
and a parameter dependent stable hyperbolic equilibrium point.
It is valuable to determine the set of parameter values, which we call the
recovery set, whose corresponding
initial conditions lie within the region of attraction of the corresponding
stable equilibrium point.
A boundary parameter value is a parameter value whose corresponding
initial condition lies in the boundary of the region of attraction of the
corresponding stable equilibrium point.
Prior algorithms numerically estimated the recovery set by estimating
its boundary via computation of boundary parameter values.
The primary purpose of this work is to provide theoretical justification for
those algorithms for a large class of parameter dependent vector fields.
This includes proving that, for these vector fields, the boundary of
the recovery set consists of boundary parameter values, and that the
properties exploited by the algorithms to compute these desired boundary
parameters will be satisfied.
The main technical result which these proofs rely on is establishing that
the region of attraction boundary varies continuously in an appropriate
sense with respect to small variation in parameter value for this class
of vector fields.
Hence, the majority of this work is devoted to proving this result,
which may be of independent interest.
The proof of continuity proceeds by proving that, for this class of vector
fields, the region of attraction permits a decomposition into a union of the
stable manifolds of the equilibrium points and periodic orbits it contains,
and this decomposition persists under small perturbations to the vector field.
\end{abstract}

\section{Introduction}


This work is motivated by physical and engineered systems that
possess a stable equilibrium point (SEP) representing
desired operation, and a parameter-dependent initial condition (IC)
which represents a parametrized, finite time disturbance.
As an example, consider a power
system subject to a lightning strike on a particular transmission line.
In such applications, it is important to understand whether the
system will be able to recover from the disturbance to the desired SEP.
This setting is well described by a parameter dependent
vector field, on either Euclidean space or a compact Riemannian manifold,
possessing a parameter dependent IC.
The IC of interest is the system state when the disturbance clears; for the
power system example, this is the system state at the moment when protection
action disconnects the lightning-affected transmission line.
The system will recover from the disturbance if and only if this IC lies in
the region of attraction (RoA) of the desired SEP.
As the parameter values of physical systems are uncertain and time-varying
in practice, it is particularly valuable to determine the set of parameter
values for which the system is able to recover to this SEP, which we call
the {\it recovery set} and denote by $R$.
We call a parameter value whose corresponding IC lies in the boundary of the
RoA of the corresponding desired SEP a {\it boundary parameter value},
because we will show (see Theorem~\ref{thm:multi}) that $\partial R$
often (in a precise sense defined in Section~\ref{sec:params})
consists entirely of boundary parameter values.
Prior algorithms were developed to determine or approximate $R$ by
numerically computing boundary parameter values \cite{Fi16,Fi18}.
The primary objective of this work is to provide a theoretical foundation
for those algorithms for a large class of parameter dependent vector fields.

Consider a particular boundary parameter value $p^*$.
Let a critical element refer to either an equilibrium point or a periodic orbit.
Suppose the orbit
of the IC corresponding to $p^*$ converges to a critical element
in the boundary of the RoA of the corresponding
desired SEP\@.
Then the amount of time the trajectory corresponding to $p^*$ spends in
any neighborhood of this critical element is infinite.
By continuity of the flow, it seems reasonable to expect that as parameter
values approach $p^*$, the time that the corresponding trajectory spends in this neighborhood
diverges to infinity.
Hence, to compute boundary parameter values, the algorithms begin by
identifying a special critical element in the boundary of the RoA,
which we call the {\it controlling critical element},
place a ball of fixed radius in state space around that controlling
critical element, and vary parameter values so as to maximize the
time that the system trajectory spends inside this ball.
As the time in the ball increases, the goal is that the parameter value will be
driven towards $\partial R$
(preferably the point in $\partial R$ that is closest to the original
parameter value). Building on this idea, algorithms have been developed to numerically estimate $R$ either by tracing $\partial R$ directly for the case of two dimensional parameter space, or by finding the largest ball around an
initial parameter value in $R$ that does not intersect $\partial R$.

The main technical challenge behind the theoretical justification
of these algorithms is to show that the RoA boundary
varies continuously in an appropriate sense under small changes in
parameter values (see Corollaries~\ref{cor:euc}-\ref{cor:morse}).
To illustrate this point, Example~\ref{ex:haus} shows that when the boundary of
the RoA does not vary continuously about a particular parameter value, then it is
possible for the ICs to ``jump'' over the RoA boundary.
In this case, $\partial R$ may not consist of boundary parameter values
and, in fact, its possible that boundary parameter values may not even exist.
The former implies that computation of boundary parameter values may not
provide an accurate estimate of $\partial R$ and hence of $R$,
and the latter implies that any attempt to compute boundary parameters must
fail since they don't exist;
both are problematic for the algorithms \cite{Fi16,Fi18} mentioned above.
Furthermore, discontinuity of the RoA boundary implies that there may not
exist a controlling critical element in the RoA
boundary with the property that as parameter values approach boundary values,
the time the trajectory spends in a ball around the controlling critical
element diverges to infinity.
Hence, even if boundary parameter values exist, the strategy employed by the
algorithms in \cite{Fi16,Fi18} may be unable to compute them.
Therefore, establishing continuity of the RoA boundary for a large class
of parameter dependent vector fields is crucial for motivating such algorithms.

The stable manifold of a critical element is the set
of initial conditions in state space which converge to that critical element
in forward time.
The approach that is used to establish continuity of the RoA boundary is to
show that at a fixed parameter value the RoA boundary is equal to the union
of the stable manifolds of the critical elements it contains, and that
this decomposition persists for small changes in parameter values, for a
large class of parameter dependent vector fields
(see Theorem~\ref{thm:bound2}).
Earlier work \cite{Ch88} reported this decomposition result for a large
class of fixed parameter $C^1$ vector fields on Euclidean space.
However, their proof relied on a Lemma \cite[Lemma~3-5]{Ch88}
which has been disproven \cite{Fi20}.
Therefore, we begin by providing a complete proof for a fixed parameter
RoA boundary decomposition result, and then focus on our main goal of
extending this work to a 
parametrized family of $C^1$ vector fields on either Euclidean space
or a compact Riemannian manifold.
Finally, continuity of the RoA boundary is then used to establish the
existence of a controlling critical element
possessing the properties that the time spent by the trajectory in a
neighborhood of the controlling critical element is continuous with respect to
parameter values and diverges to infinity as the parameter values
approach $\partial R$, thereby providing justification for the prior algorithms \cite{Fi16,Fi18}.

The paper is organized as follows.
Section~\ref{sec:defs} presents relevant background and notational
conventions.
Section~\ref{sec:examples} provides an example motivating 
discontinuity of the RoA boundary and the negative implications this can have.
Section~\ref{sec:results} presents the main results,
focusing on parameter dependent vector fields and controlling critical
elements, although results for parameter independent vector fields are
also included.
A simple example is provided to illustrate the main theorems.
Section~\ref{sec:thm11} proves the boundary decomposition results for the
case where the vector field is parameter independent.
Section~\ref{sec:thm21} builds on that foundation to
prove persistence of the boundary decomposition and continuity of the RoA
boundaries for a large class of parameter dependent vector fields. These boundary continuity results are applied in Section~\ref{sec:thm22} to
prove the existence of a controlling critical element with the
properties that motivate the algorithms of \cite{Fi16,Fi18}.
Finally, Section~\ref{sec:conc} offers some concluding thoughts and
future directions.

\section{Notation and Definitions}\label{sec:defs}



If $S$ is a subset of a topological space, we let $\overline{S}$ denote
its topological closure, $\partial S$ its topological boundary,
and $\text{int }S$ its topological interior.
If $f:A \to B$ is any function and $S \subset A$ is any subset of $A$,
$f|_S$ denotes the restricted function $f':S \to B$ defined by
$f'(s) = f(s)$ for all $s \in S$.
For a set $S$ contained in a metric space $K$,
define the $r$-neighborhood of $S$, denoted $S_r$, 
to be the set of $x \in K$
such that for each $x$ there exists $y \in S$ with $d(x,y) < r$.
Let $\{x_n\}_{n = 1}^\infty$ be a sequence.
Let $\{n_m\}_{m=1}^\infty$ be any collection of positive integers
where we require that $m' > m$ implies that $n_{m'} > n_m$ to ensure the
ordering is preserved.
Then any subsequence of $\{x_n\}_{n=1}^\infty$ can be written as
$\{x_{n_m}\}_{m=1}^\infty$ for some choice of $\{n_m\}_{m=1}^\infty$.
Let $K$ be a nonempty, compact metric space.
Note that compact Riemmanian manifolds are compact metric spaces, so they are
also covered by the following discussion.
Let $\mathcal{C}(K)$ be the nonempty, closed subsets of $K$.
Let $X,Y \in \mathcal{C}(K)$.
We define the Hausdorff distance $d_h$ by
\begin{align*}
d_h(X,Y) = \inf \{r \geq 0:X \subset Y_r, Y \subset X_r\}.
\end{align*}
Then $d_h$ is a well-defined metric on $\mathcal{C}(K)$
\cite[Section~28]{Ha57}
and we say a sequence of sets $A_n \in \mathcal{C}(K)$
converges to $A \in \mathcal{C}(K)$, denoted $A_n \to A$,
if $\lim_{n \to \infty} d_h(A_n,A) = 0$.
For $A, B$ subsets of a metric space with metric $d$, define a set distance
$d_S$ by $d_S(A,B) = \inf \{d(a,b):a \in A,b \in B\}$.
Then if $A$ is compact, $B$ is closed, and $d(A,B) = 0$,
$A$ and $B$ must have nonempty intersection.
As noted earlier, any Riemannian manifold is also a metric space, so this set distance
is well-defined on Riemannian manifolds.

Let $J$ be a topological space. For $p \in J$, we say that $p$ has a countable neighborhood basis if there exists a countable collection $\{U_n\}_{n=1}^\infty$ of open sets in $J$ that contain $p$ such that for any open set $U \subset J$ which contains $p$, there
exists an $n$ such that $U_n \subset U$. Then we say that $J$ is first countable if every point $p \in J$ possesses
a countable neighborhood basis.
If $J$ is first countable, $S$ is any topological space, and
$f:J \to S$ is a function, then $f$ is continuous if and only if for every
convergent sequence $\{p_n\}_{n=1}^\infty$ in $J$,
say $p_n \to p$, $f(p_n) \to f(p)$.
Let $J$ be a first countable topological space and let
$F:J \to \mathcal{C}(K)$.
We say that the family $\{A_p\}_{p \in J}$ is a Hausdorff continuous
family of subsets of $K$ if there exists $F:J \to \mathcal{C}(K)$
such that $F(p) = A_p$ for $p \in J$ and $F$ is continuous.
Since $J$ is first countable, $F$ is continuous if and only if for
every $p \in J$ and every sequence $p_n \in J$ with $p_n \to p$,
$F(p_n) \to F(p)$.


We consider another notion of convergence on $\mathcal{C}(K)$.
Let $A_n \in \mathcal{C}(K)$ be a sequence of sets.
Define $\liminf_{n \to \infty} A_n$ to be the set of points
$x \in K$ such that there exists a sequence $\{a_n\}$, with
$a_n \in A_n$ for all $n$, such that $a_n \to x$.
Define $\limsup_{n \to \infty} A_n$ to be the set of points
$x \in K$ such that there exist $\{a_{n_m}\}$ with
$a_{n_m} \in A_{n_m}$ a subsequence of $\{A_n\}$, such that
$a_{n_m} \to x$.
Both $\limsup_{n \to \infty} A_n$ and $\liminf_{n \to \infty} A_n$ are closed 
\cite[Section~28]{Ha57}
and $\limsup_{n \to \infty} A_n$ is nonempty since $K$ is sequentially
compact,
so if $\liminf_{n \to \infty} A_n$
is nonempty then both are elements of $\mathcal{C}(K)$.
By definition, $\liminf_{n \to \infty} A_n \subset \limsup_{n \to \infty} A_n$.
If $\limsup_{n \to \infty} A_n \subset \liminf_{n \to \infty} A_n$ then we
say the limit exists and 
$\lim_{n \to \infty} A_n = \limsup_{n \to \infty} A_n = \liminf_{n \to \infty} A_n$.
By statement~V of \cite[Section~28]{Ha57},
since $K$ is compact, if 
$\limsup_{n \to \infty} A_n = \liminf_{n \to \infty} A_n =$ \linebreak $\lim_{n \to \infty} A_n =: A$ then $\lim_{n \to \infty} d_h(A_n,A) = 0$.
Thus, if there exists $F:J \to \mathcal{C}(K)$ such that for every
$p \in J$ and every $p_n \to p$, 
$\limsup_{n \to \infty} F(p_n) = \liminf_{n \to \infty} F(p_n) = F(p)$,
then $\lim_{n \to \infty} d_h(F(p_n),F(p)) = 0$ so $\{F(p)\}_{p \in J}$ is a Hausdorff
continuous family of subsets of $K$.

Let $M = \mathbb{R}^n$ and let $\mathcal{C}(M)$ be the closed, nonempty
subsets of $M$.
The standard Hausdorff distance is not well-defined for unbounded
sets, so instead consider the one-point compactification of $M$, $\mathbb{R}^n \cup \{\infty\} \cong S^n$,
where $S^n$ is the $n$-sphere.
Equip $S^n$ with the induced Riemannian metric from its inclusion into
$\mathbb{R}^{n+1}$, and let its associated distance function be the
desired metric on $M \cup \infty$.
Then, since $S^n$ is a compact, nonempty metric space, the Hausdorff
distance is well-defined for all closed, nonempty subsets of $S^n$.
Let $\overline{\mathcal{C}}(M) = \{A \cup \{\infty\}:A \in \mathcal{C}(M)\}$.
Then all sets in $\overline{\mathcal{C}}(M)$ are closed and nonempty,
so the Hausdorff distance is well-defined on $\overline{\mathcal{C}}(M)$,
and the metric topology it induces on $\overline{\mathcal{C}}(M)$
is called the Chabauty topology.
From the discussion above regarding Hausdorff continuity, it follows 
that if there exists $F:J \to \overline{\mathcal{C}}(M)$ such that
for every $p \in J$ and every $p_n \to p$, 
$\limsup_{n \to \infty} F(p_n) = \liminf_{n \to \infty} F(p_n) = F(p)$,
then $d_h(F(p_n),F(p)) \to 0$ so $\{F(p)\}_{p \in J}$ is a Chabauty
continuous family of subsets of $M$.


Let $M$ be a Riemannian manifold. For each $x \in M$, let $T_xM$ denote the tangent space to $M$ at $x$.
Then the tangent bundle is given by $TM = \sqcup_{x \in M} T_xM$, where $\sqcup$ denotes the disjoint union\footnote{If $\{D_x\}_{x \in S}$ is a family
of sets $D_x$ parametrized by $x \in S$ for some set $S$, then the disjoint union of the family is $\sqcup_{x \in S} D_x = \bigcup_{x \in S} (x,D_x)$.}. Note that $TM$
is naturally a manifold with dimension twice that of $M$. Let the zero section be the subspace of $TM$ consisting of the zero vector
from each tangent space $T_xM$ over $x \in M$ (note that it is naturally
diffeomorphic to $M$ itself).
Note that a function $f:M \to N$, where $M$ and $N$ are $C^1$ manifolds,
is a submersion if $df_y$ is surjective for every $y \in M$, where $df_y$ denotes the differential of $f$ at $y$.
Let $X \subset M$ and $i:X \to M$ be the inclusion map, so $i(x) = x$ for all $x \in X$. We say that $i$ is a $C^1$ immersion if it is $C^1$ and for every
$y \in X$, $di_y$ is injective.
Then we say $X$ is an immersed submanifold if $i$ is a $C^1$ immersion.
Let $T_XM = \sqcup_{x \in X}T_xM$ denote the tangent bundle of $M$ over $X$.
Consider a pair of $C^1$ immersed submanifolds $X$ and $Y$.
We say that $X$ and $Y$ are transverse at a point $x \in X \cap Y$
if $T_xX \oplus T_xY$ spans $T_xM$.
Then we say that $X$ and $Y$ are transverse if for every $x \in X \cap Y$,
$X$ and $Y$ are transverse at $x$.
Note that if $X$ and $Y$ are disjoint, they are vacuously transverse.
A $C^1$ disk is the image of $i:B \to M$ where $B \subset \mathbb{R}^m$
is a closed ball around the origin in some Euclidean space $\mathbb{R}^m$,
and $i$ is a $C^1$ immersion.
A continuous family of $C^1$ disks is a parametrized family
$\{D_x\}_{x \in S}$ where $S$ is a topological space, $D_x$ is a $C^1$ disk
for each $x$, and $\{D_x\}_{x \in S}$ is a Hausdorff continuous family.
Suppose that $A$ is a $C^1$ immersed submanifold of $B$, which is a
$C^1$ immersed submanifold of $C$.
By the tubular neighborhood theorem \cite[Theorem~6.24]{Lee13},
there exists a $C^1$ continuous family of pairwise disjoint disks
$\{D(x)\}_{x \in B}$ in $C$ centered along $B$ and transverse to it
such that their union is an open neighborhood of $B$ in $C$.
Taking the restriction $\{D(x)\}_{x \in A}$ gives a $C^1$ continuous family
of pairwise disjoint disks in $C$ centered along $A$ and transverse to $B$.
If $F:M \to N$ is a continuous and injective map between manifolds $M$
and $N$ of the same dimension, then by invariance of domain
\cite[Theorem~2B.3]{Ha01}, $F$ is an open map, which means the image
under $F$ of every open set is open.

Let $V$ be a $C^1$ vector field on a Riemannian manifold $M$.
An integral curve $\gamma$ of $V$ is a map from an open subset
$U \subset \mathbb{R}$ to $M$ such that for every $t \in U$,
$\d{}{t} \gamma(t) = V_{\gamma(t)}$.
A flow is a map $\phi:U \times M \to M$, where $U \subset \mathbb{R}$ is open,
such that for any $x \in M$, $\phi(\cdot,x)$ is an integral curve of $V$.
For any $C^1$ vector field $V$ on a Riemannian manifold $M$, there exists
a $C^1$ flow $\phi$ \cite[Theorem~9.12]{Lee13}.
We say that $V$ is complete if it possesses a flow $\phi$
defined on $\mathbb{R} \times M$.
For $T \in \mathbb{R}$, we let $\phi_T:M \to M$
by $\phi_T(x) = \phi(T,x)$.
Then $\phi_T$ is a $C^1$ diffeomorphism of $M$ for any $T$ since $\phi$ is
$C^1$ and $\phi_T^{-1} = \phi_{-T}$.

If $M$ and $N$ are Riemannian manifolds which are at least $C^1$,
Let $C^1(M,N)$ denote the set of $C^1$ maps from $M$ to $N$.
There are two common topologies that $C^1(M,N)$ can be equipped with:
the strong and weak $C^1$ topologies.
Full definitions of these are available in \cite[Chapter~2]{Hi76}, but
the properties of these topologies which are most important for this work
are summarized below.
The purpose of introducing these topologies is to provide a framework
for careful consideration of perturbations to vector fields, and to be able
to define a continuous family of vector fields in a suitable way.
We will typically equip $C^1(M,N)$ with the weak topology, denoted
$C^1_W(M,N)$.
A major benefit of the weak topology is that it has a complete metric,
which we denote $d_{C^1}$ and refer to as $C^1$ distance.
If $V$ is a $C^1$ vector field on $M$ then $V \in C^1(M,TM)$.
A pair of $C^1$ vector fields $V, W$ on $M$ are $\epsilon$ $C^1$-close
if $d_{C^1}(V,W) < \epsilon$,
where $d_{C^1}$ is the $C^1$ distance on $C^1(M,TM)$.
A (weak) $C^1$ perturbation to the vector field $V$ is a vector field $W$
such that $V, W$ are $\epsilon$ $C^1$-close for sufficiently 
small $\epsilon > 0$.
A parameterized family of $C^1$ vector fields $\{V_p\}_{p \in J}$ on $M$
is (weakly) $C^1$ continuous if the induced map $J \to C^1_W(M,TM)$
that sends $p$ to $V_p$ is continuous.
Note that this implies that $V:M \times J \to TM \times TJ$ defined
by $V(x,p) = (V_p(x),0)$ is a $C^1$ vector field on $M \times J$.
In the case of $M$ compact, the strong and weak topologies
on $C^1(M,N)$ coincide.
For $M$ noncompact, a strong $C^1$ perturbation to a vector field only involves
changes to that vector field on a compact set, whereas a weak $C^1$ perturbation
to that vector field could have changes that are unbounded. For example, \cite[Example 19-1, p. 359]{Ch15} shows that under
weak $C^1$ perturbations to the vector field, a new equilibrium point can appear
arbitrarily close to infinity (the equilibrium point
``comes in from infinity'').
This is not possible under strong $C^1$ perturbations to the
vector field because for any strong $C^1$ perturbation there exists
an open neighborhood of infinity on which the vector field remains unchanged
by the perturbation.
Hence, weak continuity of vector fields is a weaker assumption than
strong continuity.


There is a notion of a generic $C^1$ vector field, which is meant to represent
typical behavior, similar to the idea of probability one in a
probability space.
If a property holds for a generic class of $C^1$ vector fields, it is therefore
considered to be typical or usual behavior.
As there exist many pathological $C^1$ vector fields,
it is often advantageous to
restrict attention to certain classes of generic $C^1$ vector fields when
possible, and to prove results for generic vector fields that often would
not hold for arbitrary vector fields.
We follow this approach here.
In a topological space $M$, a Baire set \cite[Section~48]{Mu15}
is a countable intersection
of open, dense subsets of $M$.
A topological space $M$ is metrizable if there exists a metric on $M$
whose metric topology corresponds with the original topology on $M$.
It is completely metrizable if the resultant metric space is complete.
The Baire category theorem states that if the topology on $M$ is
completely metrizable, then every Baire set in $M$ is dense.
By the discussion above, $C^1(M,N)$ is a complete metric space for
$M$ and $N$ under consideration here, so every Baire set will be dense.
Suppose $P$ is a property that may be possessed by elements of a
topological space $M$.
Then $P$ is called a generic property if the set of elements in $M$
which possess the property $P$ contains a Baire set in $M$.
So, a property of vector fields is generic (with respect to the weak topology)
if the subset of vector fields
in $C^1_W(M,TM)$ that possess this property contains a Baire set.

An equilibrium point $x_e \in M$ is a singularity of the vector field,
i.e.,~$V(x_e) = 0$.  A periodic orbit $X \subset M$ is an integral
curve of $V$ where there exists $T > 0$ such that each point of
$X$ is a fixed point of $\phi_T$.
For each point $x \in X$, there exists a codimension-one 
embedded submanifold $S$ transverse to the flow, called a cross section,
and a neighborhood $U$ of $x$ in $S$ such that the Poincar\'e first
return map $\tau:U \to S$ is well-defined and $C^1$
\cite[Page~281]{HiSm74}.
We call $X \subset M$ a critical element if it is either an
equilibrium point or a periodic orbit.

A set $S \subset M$ is forward invariant if $\phi_t(S) \subset S$
for all $t > 0$.
It is backward invariant if $\phi_t(S) \subset S$ for all $t < 0$,
and invariant if it is both forward and backward invariant.
Note that critical elements are invariant.

Let $x_e$ be an equilibrium point.
Then $x_e$ is hyperbolic if $d(\phi_1)_{x_e}$ is a hyperbolic linear
map, i.e.~if it has no eigenvalues of modulus one. 
It is stable if every eigenvalue of $d(\phi_1)_{x_e}$ has modulus
less than one.
If $X$ is a periodic orbit then let $x \in X$,
$S$ a cross section centered at~$x$,
$U$ a neighborhood of $x$ in $S$,
and $\tau:U \to S$ the $C^1$ first return map.
Then $X$ is hyperbolic if $d\tau_x$ is a hyperbolic linear map.

If $X \subset M$ is a hyperbolic critical element then it possesses
local stable and unstable manifolds \cite[Chapter 6]{Ka99},
$W^s_{\text{loc}}(X)$ and $W^u_{\text{loc}}(X)$, respectively,
such that $\phi_t(W^s_{\text{loc}}(X)) \subset W^s_{\text{loc}}(X)$
and $\phi_{-t}(W^u_{\text{loc}}(X)) \subset W^u_{\text{loc}}(X)$ for all $t > 0$.
Furthermore, the local stable and unstable manifolds are chosen to be compact.
The stable and unstable manifolds of $X$ are then defined as
$W^s(X) = \bigcup_{t \leq 0} \phi_t(W^s_{\text{loc}}(X))$
and
$W^u(X) = \bigcup_{t \geq 0} \phi_t(W^u_{\text{loc}}(X))$, respectively.
By \cite[Chapter 6]{Ka99}, $W^s(X)$ consists of the set of $x \in M$
such that the forward time orbit of $x$ converges to $X$, and $W^u(X)$
consists of the set of $x \in M$ such that the backward time orbit of $x$
converges to $X$.
Note that they are invariant under the flow.
If $X$ is a hyperbolic periodic orbit and $S$ is a cross section of $X$
with $C^1$ first return map $\tau$, it is often convenient
to consider $W^s_{\text{loc}}(X) \cap S$ and $W^u_{\text{loc}}(X) \cap S$.
Then $\tau(W^s_{\text{loc}}(X) \cap S) \subset W^s_{\text{loc}}(X) \cap S$
and $\tau^{-1}(W^u_{\text{loc}}(X) \cap S) \subset W^u_{\text{loc}}(X) \cap S$.
Hence, we abuse notation and let $W^s_{\text{loc}}(X)$ refer either to
$W^s_{\text{loc}}(X)$ as defined above or to $W^s_{\text{loc}}(X) \cap S$ for some
cross section $S$ of $X$.
The distinction should be clear from context.
Define the notation $W^u_{\text{loc}}(X)$ analogously.

Let $X$ be a hyperbolic critical element with
$(W^s(X)-X) \cap (W^u(X)-X) \neq \emptyset$.
Then the orbit of each $x \in (W^s(X)-X) \cap (W^u(X)-X)$ is called a
homoclinic orbit.
If, in addition, 
$W^s(X)$ and $W^u(X)$ have nonempty, transversal intersection, 
then the orbit of each $x \in (W^s(X)-X) \cap (W^u(X)-X)$ is called a
transverse homoclinic orbit. 
Let $X$, $Y$ be hyperbolic critical elements with 
$(W^s(X)-X) \cap (W^u(Y)-Y) \neq \emptyset$.
Then the orbit of each $x \in (W^s(X)-X) \cap (W^u(Y)-Y)$ 
is called a heteroclinic orbit, 
and it is called a transverse heteroclinic orbit if the
intersection is transverse.
Let $X^1,...,X^n$ be a finite set of hyperbolic critical elements
with $X^n = X^1$.
If $(W^s(X^i)-X^i) \cap (W^u(X^{i+1})-X^{i+1})$ 
is nonempty and transverse for each
$i \in \{1,...,n-1\}$, then we call $\{X^i\}_{i=1}^n$ a
heteroclinic cycle.
If $X^1,X^2, ...$ is a sequence of hyperbolic critical elements with
$(W^s(X^i)-X^i) \cap (W^u(X^{i+1})-X^{i+1})$ 
nonempty and transverse for all $i$,
then we call $\{X^i\}_{i=1}^\infty$ a heteroclinic sequence.

Let $X$ be a hyperbolic critical element.
If $X$ is an equilibrium point, let $B = W^u_{\text{loc}}(X)$,
let $D$ be a $C^1$ disk in $M$ such that $D$ has nonempty,
transversal intersection with $W^s(X)$,
and let $f = \phi_1$ be the time-one flow.
If $X$ is a periodic orbit, let $B = W^u_{\text{loc}}(X) \cap S$,
where $S$ is a cross section of $X$, let $D$ be a $C^1$ disk
in $S$ such that $D$ has nonempty, transversal intersection
in $S$ with $W^s(X) \cap S$, and let $f$ be the $C^1$ first
return map defined on an open subset of $S$.
Suppose $\text{dim }D \geq \text{dim }B$, and let $q \in D \cap W^s(X)$.
Let $f^n = f \circ f \circ ... \circ f$ denote composition of $f$ with itself
a total of $n$ times.
Then the Inclination Lemma, otherwise known as the Lambda Lemma,
states \cite{Pa69} that for every $\epsilon > 0$ there exists $n_0 > 0$
such that $n \geq n_0$ implies a submanifold of
$f^n(D)$ containing $f^n(q)$ is $\epsilon~C^1$-close to $B$.
For convenience, we often omit the submanifold qualifier and implicitly
redefine (shrink) $D$ so that
$f^n(D)$ itself is $\epsilon~C^1$-close to $B$.

Let $V$ be a $C^1$ vector field on a Riemannian manifold $M$ with
corresponding flow $\phi$.
A point $x \in M$ is nonwandering for $V$ 
if for every open neighborhood $U$
of $x$ and every $T > 0$, there exists $t > T$ such that 
$\phi_t(U) \cap U \neq \emptyset$.
Let $\Omega(V)$ denote the set of nonwandering points for $V$ in $M$.
If $y \in M$, define its $\omega$-limit set to be
the set of points $x \in M$ such that there exists a sequence
$t_i \to \infty$ with $\phi_{t_i}(y) \to x$.
If $\gamma \subset M$ is an orbit, define its $\omega$-limit set to
be the $\omega$-limit set of any $y \in \gamma$, and note that this
is well-defined because all points on an orbit share the same
$\omega$-limit set.
Define the $\alpha$-limit set of an orbit analogously,
for $t_i \to -\infty$.
Write $\omega(\gamma)$ and $\alpha(\gamma)$ for the $\omega$-limit set
and $\alpha$-limit set, respectively, of the orbit $\gamma$.
Then $V$ is a Morse-Smale vector field if it satisfies:
\begin{enumerate}
\item $\Omega(V)$ is a finite union of critical elements.
\item Every critical element is hyperbolic.
\item The stable and unstable manifolds of each individual and all pairs of
critical elements have transversal intersection.
\end{enumerate}
By \cite{Sm62,Ku63},
Assumptions 2 and 3 are generic, whereas Assumption 1 is not.
Note that Morse-Smale vector fields were defined for compact
Riemannian manifolds $M$ \cite{Sm60}. We will see in Section~\ref{sec:ind} that an additional assumption (Assumption~\ref{as:inf1}) is necessary for $M = \mathbb{R}^n$.

Let $J \subset \mathbb{R}$ be an open interval representing parameter
values and fix $p_0 \in J$.
Let $J_r = \{p \in J:|p-p_0| < r\}$
and $J_{\overline{r}} = \{p \in J:|p-p_0| \leq r\}$.
For $Q \subset J$, let $M_Q = M \times Q$ and let $M_p = M_{\{p\}}$.
Let $\{V_p\}_{p \in J}$ be a $C^1$ continuous family of vector fields on $M$.
Suppose $X_{p_0}$ is a hyperbolic critical element of $V_{p_0}$ for some
$p_0 \in J$.
Then for $J$ sufficiently small, $p \in J$ implies there exists
a unique hyperbolic critical element $X_p$ of $V_p$ which is $C^1$-close to
$X_{p_0}$ \cite[Chapter 16]{HiSm74}.
This defines the family $\{X_p\}_{p \in J}$ of a critical element of
the vector fields $\{V_p\}_{p \in J}$.
To avoid ambiguity,
we reserve the phrase ``family of a critical element'' to refer to the family
obtained from a single critical element as the parameter value $p$
varies over $J$.
In particular, this implies that for each fixed parameter value $p$, the family
of a critical element will possess exactly one critical element of $V_p$.
Throughout the paper, for a fixed parameter value $p \in J$, it will sometimes
be convenient to think of a critical element $X_p$ as being a subset of $M$,
and sometimes as a subset of $M \times J$.
Therefore, we abuse notation and let $X_p$ denote a critical element
of $V_p$, where sometimes we consider $X_p \subset M$
and sometimes we consider $X_p \subset M \times \{p\} \subset M \times J$.
The distinction should be clear from context.
For $Q \subset J$, we write $X_Q = \bigcup_{p \in Q} X_p \subset M \times J$.
We write $W^s(X_Q) = \sqcup_{p \in Q} W^s(X_p) \subset M \times J$,
and $W^u(X_Q) = \sqcup_{p \in Q} W^u(X_p) \subset M \times J$.


\section{Motivating Example}\label{sec:examples}

\begin{example}[Lack of Hausdorff Continuity of Boundaries]
\label{ex:haus}

We show that every smooth manifold $M$ possesses a family of
smooth vector fields which is continuous with respect to the strong 
$C^\infty$ topology and such that the vector fields have a family of stable
equilibria whose boundaries of their regions of attraction are not 
Hausdorff continuous.
As the strong $C^\infty$ topology is the most restrictive of the standard
$C^r$ topologies, this implies that even such a high degree of regularity
is not sufficient to prevent a lack of Hausdorff continuity of the
boundaries.
We define the family of vector fields such that they are supported within
a single chart, and then extend them trivially to the entire manifold $M$
by declaring them to be zero outside this chart.
So, it suffices to consider $M = \mathbb{R}^n$.
Let $h_{(a,b)}:[0,\infty) \to [0,1]$ be a smooth bump function with
$h_{(a,b)}^{-1}(1) = [0,a]$ and $h_{(a,b)}^{-1}(0) = [b,\infty)$.
Let $p \in \mathbb{R}$, $e_1$ denote the first standard basis vector,
and define, for $x \in \mathbb{R}^n$,
\begin{align*}
V_p(x) = -x h_{(0,1.5)}(|x|) + p h_{(2,3)}(|x|)e_1.
\end{align*}
Then $\{V_p\}_{p \in (-0.2,0.2)}$ is continuous with respect to the weak
$C^1$ topology.
In fact, \linebreak $\{V_p\}_{p \in (-0.2,0.2)}$ is also continuous with
respect to the more restrictive strong $C^\infty$ topology, so
$\{V_p\}_{p \in (-0.2,0.2)}$ varies as smoothly as might be desired.
Furthermore, for $p \in (-0.2,0.2)$, the vector field $V_p$ is unchanged
outside a fixed compact set.
Nevertheless, despite the smoothness of $\{V_p\}_{p \in (-0.2,0.2)}$
and the fact that variations are restricted to a fixed compact set,
this family of vector fields exhibits discontinuity in the boundaries of the
regions of attraction of a family of stable equilibria.

For each $p$, $V_p$ has a stable equilibrium point near the origin,
call them $\{X^s_p\}_{p \in (-0.2,0.2)}$.
The case of $n=1$ is illustrated in Fig.~\ref{fig:Vps}, which shows
the vector field $V_p$ for a few values of $p$.
For $p = 0.1$, the vector field is positive for $x \in (-3,X_{0.1}^s)$,
driving initial conditions in this range towards $X^s_{0.1}$,
and is negative for $x$ greater than $X^s_{0.1}$ but less than about 1.1,
driving these initial conditions towards $X^s_{0.1}$ as well.
So, $W^s(X^s_{0.1}) \approx (-3,1.1)$ consists of a line segment and
$\partial W^s(X^s_{0.1}) \approx \{-3,1.1\}$ consists of the two points on the
boundary of the line segment.
In fact, for any $p \in (0,0.2)$, $W^s(X^s_p)$ will be a line segment that
includes $(-3,0)$, and $\partial W^s(X^s_p)$ will consist of the two points
on its boundary, one of which is $\{-3\}$.
For $p = -0.1$, the vector field is negative for
$x \in (X_{-0.1}^s,3)$,
driving initial conditions in this range towards $X^s_{-0.1}$,
and is positive for $x$ less than $X^s_{-0.1}$ but greater than about -1.1,
driving these initial conditions towards $X^s_{-0.1}$ as well.
So, $W^s(X^s_{-0.1}) \approx (-1.1,3)$ consists of a line segment and
$\partial W^s(X^s_{-0.1}) \approx \{-1.1,3\}$ consists of the two points on the
boundary of the line segment.
In fact, for any $p \in (-0.2,0)$, $W^s(X^s_p)$ will be a line segment that
includes $(0,3)$, and $\partial W^s(X^s_p)$ will consist of the two points
on its boundary, one of which is $\{3\}$.
Now consider the case where $p = 0$.
By analogous reasoning to the above, based on the sign of the vector field,
$W^s(X^s_0) = (-1.5,1.5)$.
So, $\partial W^s(X^s_0) = \{-1.5,1.5\}$.
But, we saw that as $p$ approaches zero from above,
$\partial W^s(X^s_p)$ contains the point $\{-3\}$,
and as $p$ approaches zero from below, $\partial W^s(X^s_p)$ contains
the point $\{3\}$, neither of which are contained in
$\partial W^s(X^s_0) = \{-1.5,1.5\}$.
Hence, the family $\{\partial W^s(X^s_p)\}_{p \in (-0.2,0.2)}$ is Hausdorff
discontinuous at $p = 0$ from both above and below.

\begin{figure}
\centering
\includegraphics[width=0.5\textwidth]{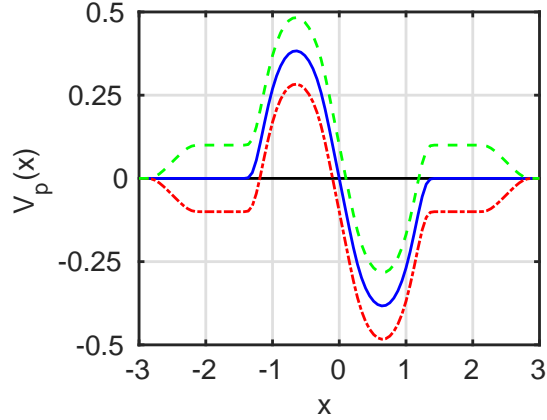}
\caption{The graph of $V_p$ for $p = 0.1$ (green dashed),
$p = 0$ (blue solid), and $p = -0.1$ (red dot-dashed).
This figure originally appeared in \cite{Fi17}.}
\label{fig:Vps}
\end{figure}

\begin{figure}
\centering
\includegraphics[width=0.5\textwidth]{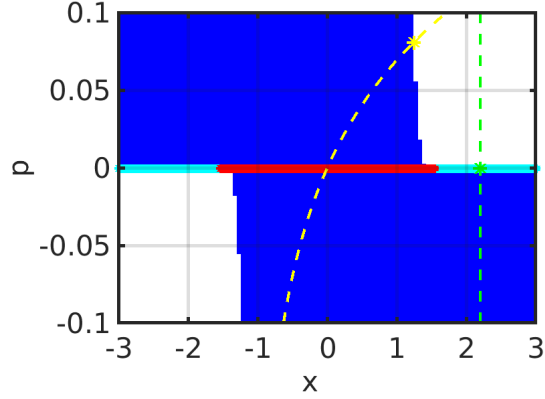}
\caption{The disjoint union of the regions of attraction of the family of 
stable equilibria of the vector fields $\{V_p\}$ over 
$p \in [-0.1,0.1]$ (blue).
The region of attraction of the stable equilibrium point of $V_0$
is shown in red.
Then $\partial W^s(X^s_0)$ consists of the two points on the boundary of this
red line segment, while
$\partial W^s(X^s_J) \cap \left(\mathbb{R} \times \{0\}\right)$
is equal to the union of the cyan line segments together with the end points of
the red line segment.
One family of initial conditions (yellow) begins inside the regions of
attraction and passes through one of their boundaries as $p$ is increased.
Another family of initial conditions (green) begins inside the regions of
attraction and passes outside without passing through one of their
boundaries.
This occurs because the boundaries of the regions of attraction fail to be
Hausdorff continuous at $p = 0$.
This figure originally appeared in \cite{Fi17}.}
\label{fig:bounds}
\end{figure}

Fig.~\ref{fig:bounds} illustrates this more clearly,
by showing $W^s(X^s_p)$ and $\partial W^s(X^s_p)$ for $p \in (-0.1,0.1)$,
as well as $\partial W^s(X^s_{(-0.1,0.1)})$.
Let $M = \mathbb{R}$ and let $J = (-0.1,0.1)$.
Then for $p \in J$ with $p \neq 0$, Fig.~\ref{fig:bounds} plots
$W^s(X^s_p) \subset M \times \{p\} \subset M \times J$ in blue.
And at $p = 0$,
$W^s(X^s_0) \subset M \times \{0\}$ is shown in red.
For $p > 0$, $W^s(X^s_p)$ includes $(-3,0)$, so $\partial W^s(X^s_p)$
contains $\{-3\}$.
For $p < 0$, $W^s(X^s_p)$ includes $(0,3)$, so $\partial W^s(X^s_p)$
contains $\{3\}$.
However, for $p = 0$, $W^s(X^s_0) = (-1.5,1.5)$,
so $\partial W^s(X^s_0) = \{-1.5,1.5\}$, which does not contain
$\{3\}$ nor $\{-3\}$.
Thus, as discussed above, $\{\partial W^s(X^s_p)\}_{p \in J}$
is Hausdorff discontinuous at $p = 0$ from both above and below.
Now consider $\partial W^s(X^s_J)$.
First note that $\partial W^s(X^s_J)$ contains
$\sqcup_{p \in J} \partial W^s(X^s_p)$, so for each $p \in J$ it contains the
two points of $\partial W^s(X^s_p)$.
However, as $\partial W^s(X^s_J)$ is obtained by taking the topological
boundary of $W^s(X^s_J)$ in $M \times J$, it also contains the cyan
line segments shown at $p = 0$, which are
$[-3,-1.5]$ and $[1.5,3]$.
Hence, $\partial W^s(X^s_J) \cap \left(M \times \{0\}\right)$
contains the line segments $[-3,-1.5]$ and $[1.5,3]$,
whereas $\partial W^s(X^s_0) = \{-1.5,1.5\}$ consists only of two points.
In particular, $\partial W^s(X^s_J)$ is strictly larger than
$\sqcup_{p \in J} \partial W^s(X^s_p)$.
For a large class of families of $C^1$ vector fields,
Theorem~\ref{thm:bound2} shows that
$\partial W^s(X^s_J) = \sqcup_{p \in J} \partial W^s(X^s_p)$,
and Corollaries~\ref{cor:euc}-\ref{cor:comp} show that
$\{\partial W^s(X^s_p)\}_{p \in J}$ varies Chabauty or Hausdorff continuously,
respectively.

From a practical perspective, we consider an initial condition $y_p$ which is
a $C^1$ function of parameter $p$ and represents the system state after a finite
time, parameter-dependent disturbance.
In order to prove Theorem~\ref{thm:time}, which provides theoretical motivation
for the prior algorithms of \cite{Fi16}, it is essential that there exists
a boundary parameter value $p^*$ such that $y_{p^*} \in \partial W^s(X^s_{p^*})$.
Suppose for some values of $p$ that $y_p \in W^s(X^s_J)$,
so the system recovers from the disturbance, and for
other values of $p$ that $y_p \not\in W^s(X^s_J)$,
so the system does not recover from the disturbance.
Then since $y_p$ is continuous in $p$, $y_J$ is connected, so there
must exist at least one $p^*$ such that $y_{p^*} \in \partial W^s(X^s_J)$.
However, as this example shows, $y_{p^*} \in \partial W^s(X^s_J)$ does not
necessarily imply that $y_{p^*} \in \partial W^s(X^s_{p^*})$ as is required
for the proof of Theorem~\ref{thm:time}.
In particular, Fig.~\ref{fig:bounds} shows two families of initial conditions
$y_J$: a yellow family of initial conditions which does pass through
$\partial W^s(X^s_{p^*})$ for some parameter value $p^*$, and a green family of
initial conditions which does not pass through $\partial W^s(X^s_p)$ for any
$p \in J$ but passes through $\partial W^s(X^s_J)$ via one of the cyan
line segments.
Hence, this example shows that 
when the assumptions required by Theorem~\ref{thm:bound2} are not
met, the conclusions of that theorem may not hold and,
as a result, the conclusions of Theorem~\ref{thm:time} may not hold either.

The discussion above generalizes to arbitrary dimension $n$. An example which shows that it is possible for a new nonwandering point to enter
the RoA boundary under arbitrarily small perturbations,
even if the vector field is globally Morse-Smale before the perturbation,
is given in \cite{Fi18b}.
In that example, a strong $C^1$
continuous family of Morse-Smale vector fields on $\mathbb{R}^2$
has a new equilibrium point enter the boundary of the RoA
for $p$ arbitrarily close to $p_0$, and the RoA boundary is Chabauty
discontinuous at $p_0$.
This motivates the need for Assumption~\ref{as:inf2} in
Section~\ref{sec:params}.

\end{example}

\section{Main Results}\label{sec:results}

\subsection{Vector Field is Parameter Independent}\label{sec:ind}

The primary motivation for presenting the results of this section
for parameter independent
vector fields is to provide a foundation for, and to improve the clarity of
presentation of, the results for parameter dependent vector fields
in Section~\ref{sec:params}.
However, the main result here (Theorem~\ref{thm:bound1})
may also be of some independent interest as
it provides a complete proof for parameter independent vector fields of
a result for which earlier proofs \cite{Ch88} are incomplete.

Let $V$ be a complete $C^1$ vector field on $M$,
where $M$ is either a compact Riemannian manifold or $\mathbb{R}^n$.
Let $X^s$ be a stable equilibrium point of $V$.
We make the following assumptions.

\begin{assumption}\label{as:wand1}
There exists a neighborhood $N$ of $\partial W^s(X^s)$ such that
$\Omega(V) \cap N$ consists of a finite union
of critical elements; call them $\{X^i\}_{i \in I}$ where $I = \{1,...,k\}$.
\end{assumption}

\begin{assumption}\label{as:inf1}
For every $x \in \partial W^s(X^s)$, the forward orbit of $x$ under
$V$ is bounded.
\end{assumption}

\begin{assumption}\label{as:hyp1}
Every critical element in $\partial W^s(X^s)$ is hyperbolic.
\end{assumption}

\begin{assumption}\label{as:trans1}
For each pair of critical elements in $\partial W^s(X^s)$,
say $X^i$ and $X^j$, 
$W^s(X^i)$ and $W^u(X^j)$ are transversal.
\end{assumption}

\begin{remark}
Assumptions~\ref{as:wand1},\ref{as:hyp1}, and \ref{as:trans1}
ensure that $V$ is Morse-Smale along \\$\partial W^s(X^s)$.
\end{remark}

\begin{remark}
Assumption~\ref{as:inf1} is necessary in the case $M = \mathbb{R}^n$
since Morse-Smale vector fields were defined
on compact manifolds \cite{Sm60},
whereas for $M = \mathbb{R}^n$ it is necessary to prohibit
orbits in $\partial W^s(X^s)$ from diverging to infinity in forward time.
\end{remark}

\begin{remark}
By the Kupka-Smale Theorem for $M$ compact \cite{Sm60,Ku63},
and its generalization for $M$ $\sigma$-compact \cite[Page~294]{Ka99},
Assumptions~\ref{as:hyp1} and \ref{as:trans1} are generic
with respect to the weak $C^1$ topology.
\end{remark}

\begin{remark}
\label{rem:wand}
By \cite[Remark 4.4]{Fi18b} and \cite[Lemma 4.5]{Fi18b},
Assumption~\ref{as:wand1} can be relaxed to
the assumption that there exists a neighborhood of $\partial W^s(X^s)$
in which the number of equilibrium points and periodic orbits is finite,
together with an additional assumption that is generic with respect to the
strong $C^1$ topology.
\end{remark}

\begin{remark}
By Assumption~\ref{as:hyp1}, hyperbolicity of the critical elements
implies that their stable and unstable manifolds exist.
\end{remark}

\begin{remark}
Assumptions~\ref{as:wand1}-\ref{as:inf1} together imply that for any
orbit $\gamma \subset \partial W^s(X^s)$,
$\omega(\gamma) = X^i$ for some $i \in I = \{1,...,k\}$.
\end{remark}



Theorem~\ref{thm:bound1} 
gives a decomposition of the boundary of the region 
of attraction for
a parameter independent vector field as a union of the stable manifolds
of the critical elements it contains.

\begin{theorem}\label{thm:bound1}
Let $M$ be either a compact Riemannian manifold or Euclidean space,
and suppose $V$ is a $C^1$ vector field on $M$ satisfying
Assumptions~\ref{as:wand1}-\ref{as:trans1}
Let $\{X^i\}_{i \in I}$ be the critical elements contained in
$\partial W^s(X^s)$.
Then $\partial W^s(X^s) = \bigcup_{i \in I} W^s(X^i)$.
\end{theorem}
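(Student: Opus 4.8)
The plan is to prove the two inclusions $\bigcup_{i \in I} W^s(X^i) \subseteq \partial W^s(X^s)$ and $\partial W^s(X^s) \subseteq \bigcup_{i \in I} W^s(X^i)$ separately. The second inclusion is the substantive one. For the first (easy) direction, fix a critical element $X^i \subseteq \partial W^s(X^s)$ and a point $x \in W^s(X^i)$. Since $\partial W^s(X^s)$ is closed (it is a topological boundary) and invariant under the flow (the region of attraction of a stable equilibrium is open and invariant, so its boundary is invariant), and since the forward orbit of $x$ converges to $X^i \subseteq \partial W^s(X^s)$, the whole forward orbit of $x$ together with its $\omega$-limit lies in $\partial W^s(X^s)$; by invariance running backward, $x$ itself lies in $\partial W^s(X^s)$. (More directly: $x \in W^s(X^i)$ and $X^i$ invariant with $X^i \subseteq \partial W^s(X^s)$ closed forces $x \in \overline{W^s(X^s)}$ but $x \notin W^s(X^s)$ since orbits in the open invariant set $W^s(X^s)$ cannot converge to a point on its boundary.)

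For the reverse inclusion, take $x \in \partial W^s(X^s)$. By Assumption~\ref{as:inf1} the forward orbit $\gamma^+(x)$ is bounded, hence has nonempty compact $\omega$-limit set $\omega(x)$, and by invariance and closedness of $\partial W^s(X^s)$ we have $\omega(x) \subseteq \partial W^s(X^s)$. The goal is to show $\omega(x)$ equals a single critical element $X^i$, for then $x \in W^s(X^i)$ by definition of the stable manifold. By Assumptions~\ref{as:wand1}–\ref{as:inf1} (see the final Remark), $\omega(x) = X^i$ for some $i \in I$: indeed $\omega(x)$ is an invariant set contained in the neighborhood $N$ of $\partial W^s(X^s)$, it is chain-recurrent / nonwandering in the restricted dynamics, so $\omega(x) \subseteq \Omega(V) \cap N = \bigcup_{i\in I} X^i$; connectedness of $\omega(x)$ (for a bounded orbit) together with the critical elements being disjoint compact invariant sets forces $\omega(x)$ to lie inside one $X^i$, and minimality/recurrence arguments then give $\omega(x) = X^i$. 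This is the step I expect to carry out carefully, since the claim "$\omega(x)$ is a single critical element" is exactly where the flawed Lemma~3--5 of \cite{Ch88} was used and where \cite{Fi20} found the gap; one must argue it from the Morse--Smale structure along the boundary (the transversality Assumption~\ref{as:trans1} rules out cycles of heteroclinic connections, so there is no nontrivial chain-recurrent behavior other than the critical elements themselves, forcing convergence to a single one).

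The main obstacle, then, is establishing rigorously that no orbit in $\partial W^s(X^s)$ can have an $\omega$-limit set strictly larger than one critical element — i.e., ruling out the scenario where the orbit shadows a heteroclinic cycle among the $X^i$. The strategy is: suppose $\omega(x)$ meets two distinct critical elements $X^i, X^j$; then by invariance and connectedness of $\omega(x)$ there is an orbit in $\omega(x)$ from (a neighborhood of) one to the other, producing a heteroclinic orbit, and iterating yields a heteroclinic cycle. Transversality (Assumption~\ref{as:trans1}) together with a dimension count along the cycle, combined with the Inclination/Lambda Lemma to propagate transversality around the loop, yields a contradiction with the boundary having empty interior (or with $X^s$ being attracting). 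Once the single-critical-element conclusion is in hand, the decomposition $\partial W^s(X^s) = \bigcup_{i\in I} W^s(X^i)$ follows immediately by combining the two inclusions.
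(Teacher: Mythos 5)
There is a genuine gap, and it sits exactly where you have labelled the problem ``easy.'' Your argument for the inclusion $\bigcup_{i \in I} W^s(X^i) \subseteq \partial W^s(X^s)$ is a non sequitur: from $x \in W^s(X^i)$ and $X^i \subseteq \partial W^s(X^s)$ you conclude that ``the whole forward orbit of $x$ together with its $\omega$-limit lies in $\partial W^s(X^s)$,'' and then that $x \in \overline{W^s}(X^s)$. Invariance of $\partial W^s(X^s)$ says that orbits which \emph{start} in the boundary stay in it; it does not say that a point whose $\omega$-limit set lies in the boundary is itself in the boundary. A closed invariant set need not contain the stable sets of its points, and the assertion that $\partial W^s(X^s)$ \emph{does} contain $W^s(X^i)$ for each critical element $X^i$ it contains is precisely the nontrivial half of the theorem --- assuming it as a consequence of invariance is circular. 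Nothing in your sketch explains why an arbitrary point of $W^s(X^i)$, possibly far from $X^i$, should be approximable by points of $W^s(X^s)$.

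The paper's proof spends essentially all of its effort on this direction. The key intermediate fact is Lemma~\ref{lem:int1}: $W^u(X^i) \cap W^s(X^s) \neq \emptyset$ for every critical element $X^i \subset \partial W^s(X^s)$. Granting that, one takes $x \in W^s_{\text{loc}}(X^i)$, a small disk $D$ through $x$ transverse to $W^s(X^i)$, and applies the Inclination Lemma: forward iterates of $D$ become $C^1$-close to $W^u_{\text{loc}}(X^i)$, which meets the \emph{open} invariant set $W^s(X^s)$, so some forward iterate of $D$ meets $W^s(X^s)$, hence $D$ itself does by invariance; shrinking $D$ gives $x \in \overline{W^s}(X^s) \setminus W^s(X^s) = \partial W^s(X^s)$. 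Establishing Lemma~\ref{lem:int1} in turn requires Lemma~\ref{lem:bound1} (the unstable manifold of $X^i$ meets $\overline{W^s}(X^s)$, via the neighborhood construction of Lemma~\ref{lem:tech1}), the construction of a heteroclinic sequence descending from $X^i$, and the proof that this sequence terminates --- i.e., Lemma~\ref{lem:het}, the absence of heteroclinic cycles, which rests on the dimension count of Lemma~\ref{lem:counting} and transversality. This finiteness step is where the disproven \cite[Lemma~3-5]{Ch88} entered the original argument, not in the single-critical-element claim for $\omega$-limit sets as you suggest. By contrast, the direction you call substantive, $\partial W^s(X^s) \subseteq \bigcup_{i} W^s(X^i)$, is handled in the paper in two lines: for a bounded orbit $\gamma \subset \partial W^s(X^s)$, the set $\omega(\gamma)$ is nonempty, compact, connected, invariant, consists of nonwandering points, and lies in $N$, so Assumption~\ref{as:wand1} and disjointness of the $X^i$ force $\omega(\gamma) = X^i$ for a single $i$; your sketch of this is essentially correct, and the lengthy discussion of shadowed heteroclinic cycles is not needed for it.
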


\begin{remark}
Theorem~\ref{thm:bound1} was originally reported in
\cite[Theorem~4-2]{Ch88} under slightly more general assumptions.
Namely, our Assumption~\ref{as:wand1} was replaced by the assumption that
for every $x \in \partial W^s(X^s)$, the trajectory of $x$ converges to a
critical element in forwards time.
Hence, the number of critical elements in $\partial W^s(X^s)$ was not
assumed to be finite, and the set of $\omega$ limit points in
$\partial W^s(X^s)$, rather than the nonwandering set on a neighborhood of
$\partial W^s(X^s)$,
was assumed to consist solely of critical elements
(in general the nonwandering set may be larger than the closure of the set
of $\omega$ limit points).
The main purpose for presenting Theorem~\ref{thm:bound1} 
under these more restrictive assumptions is that its treatment more closely parallels
the results and proofs of Theorem~\ref{thm:bound2} for the case
of parameter dependent vector fields.
For example, a finite number of critical elements is necessary to ensure that
all critical elements persist under small perturbations to the vector field.
It should also be noted, though, that the proof of \cite[Theorem~4-2]{Ch88} relies on
\cite[Lemma~3-5]{Ch88}, which has been disproven~\cite{Fi20}. Therefore, the proof of \cite[Theorem~4-2]{Ch88} is incomplete,
so the proof of Theorem~\ref{thm:bound1} presented here represents the first
complete proof of this result.
\end{remark}

\subsection{Vector Field is Parameter Dependent}\label{sec:params}

Next we generalize the above results to the case where the vector field
is parameter dependent.
Let $J$ be a connected smooth manifold representing a family of
parameters, and let $\{V_p\}_{p \in J}$ be a weak $C^1$ continuous family
of complete $C^1$ vector fields on $M$.
Let $V$ be the complete $C^1$ vector field on $M \times J$
defined by $V(x,p) = (V_p(x),0) \subset T_xM \times T_pJ$.
Let $\phi$ be the $C^1$ flow of $V$, where $\phi(t,x,p)$ denotes the flow at time $t \in \mathbb{R}$ from initial condition $x \in M$ of the vector field $V_p$.
For fixed $t$, we often write
$\phi_t:M \times J \to M \times J$ by $\phi_t(x,p) = \phi(t,x,p)$
and note that $\phi_t$ is a $C^1$ diffeomorphism for each~$t$.

Let $\{X^s_p\}_{p \in J}$ be a $C^1$ continuous family of stable
equilibria of the vector fields $\{V_p\}_{p \in J}$.
Let $X^s_J = \bigcup_{p \in J} X^s_p$
and let $W^s(X^s_J) = \sqcup_{p \in J} W^s(X^s_p)$.
In this setting, there are two different boundaries of 
regions of attraction to consider. First, for any fixed parameter value $p \in J$ we have
$\partial W^s(X^s_{p})$, where the topological boundary operation is taken in $M$. Second, we have $\partial W^s(X^s_J)$, where the topological boundary operation is taken in 
$M \times J$. It is always true that $\sqcup_{p \in J} \partial W^s(X^s_p) \subset \partial W^s(X^s_J)$, but the two boundaries may differ as in Example~\ref{ex:haus}. Therefore, we make assumptions regarding the behavior of $V$ along $\partial W^s(X^s_J)$ rather than along $\sqcup_{p \in J} \partial W^s(X^s_p)$.
For some fixed $p_0 \in J$ we make the following assumptions.

\begin{assumption}\label{as:wand2}
There exists a neighborhood $N$ of $\partial W^s(X^s_J) \cap M_{p_0}$ in
$M_{p_0}$ such that $\Omega(V_{p_0}) \cap N$ consists of a finite union of critical elements of $V_{p_0}$;
call them $\{X^i_{p_0}\}_{i \in I}$ where $I = \{1,...,k\}$ and $k \geq 1$.
\end{assumption}

\begin{assumption}\label{as:hyp2}
Every critical element in $\partial W^s(X^s_J) \cap M_{p_0}$ is hyperbolic in $M$ with respect to $V_{p_0}$.
\end{assumption}

\begin{remark}\label{rem:hyp}
By Assumption~\ref{as:hyp2}, the critical elements $\{X^i_{p_0}\}_{i \in I}$ in
$\partial W^s(X^s_J) \cap M_{p_0}$ are hyperbolic
so, since $I$ is finite, 
they and their stable and unstable manifolds persist for
$J$ sufficiently small.
Let $X^i_p$ denote the perturbation of $X^i_{p_0}$ for $i \in I$ and $p \in J$.
Let $W^s(X^i_p)$ and $W^u(X^i_p)$ denote the stable and unstable manifolds,
respectively, for each $i \in I$ and $p \in J$.
\end{remark}

\begin{assumption}\label{as:inf2}
For each $p \in J$, $\Omega(V_p) \cap \left(\partial W^s(X^s_J) \cap M_p\right)
= \bigcup_{i \in I} X^i_p$ and for every $x \in \partial W^s(X^s_J) \cap M_p$
its forward orbit under $V_p$ is bounded.
\end{assumption}

\begin{assumption}\label{as:trans2}
For each pair of critical elements that are contained in
$\partial W^s(X^s_J) \cap M_{p_0}$, say $X^i_{p_0}$ and $X^j_{p_0}$,
$W^s(X^i_{p_0})$ and $W^u(X^j_{p_0})$ are transversal in $M$.
\end{assumption}

\begin{remark}
Assumptions~\ref{as:wand2}, \ref{as:hyp2} and \ref{as:trans2} 
are straightforward generalizations
of Assumptions~\ref{as:wand1}, \ref{as:hyp1} and \ref{as:trans1}.
They ensure that $V_{p_0}$ is Morse-Smale along $\partial W^s(X^s_J) \cap M_{p_0}$.
\end{remark}

\begin{remark}
Assumption~\ref{as:inf2} generalizes Assumption~\ref{as:inf1} 
by ensuring that, for every $p \in J$,
every orbit in $\partial W^s(X^s_J) \cap M_p$
converges to $X^i_p$ for some $i \in I$. This implies that the set $I$ indexing the critical elements remains unchanged for all $p \in J$,
and therefore that no critical elements enter or exit $\partial W^s(X^s_J)$ for $p \in J$.
%
\end{remark}

\begin{remark}
Using the results of \cite{Fi18b}, Assumption~\ref{as:wand2} can be partially
relaxed as in Remark~\ref{rem:wand}.
\end{remark}

\begin{remark}
If $M$ is a compact Riemannian manifold, Assumption~\ref{as:inf2} is not necessary, according to \cite[Theorem 4.6]{Fi18b}. If $M$ is Euclidean, \cite[Theorem 4.16]{Fi18b} allows Assumption~\ref{as:inf2} to be partially relaxed when $\{V_p\}_{p \in J}$ is a strong $C^1$ continuous family of vector fields. In particular, in this case it suffices to assume that for every $x \in \partial W^s(X^s_{p_0})$, the forward orbit of $x$ is bounded, that there exists a neighborhood $N$ of infinity such that $\Omega(V_{p_0}) \cap N = \emptyset$ and no orbit under $V_{p_0}$ is entirely contained in $N$ in both forward and backward time. There is also a requirement for some additional generic assumptions related to points of continuity of semi-continuous functions.
\end{remark}

Theorem~\ref{thm:bound2} gives a decomposition of $\partial W^s(X^s_J)$ as a disjoint union over parameter values in $J$ of a union of the stable manifolds of its critical elements. Furthermore, it shows that the topological boundary in $M \times J$, $\partial W^s(X^s_J)$, is equal to the disjoint union over $p \in J$ of the topological boundaries in $M$ of the stable manifolds of the stable
equilibria.
Using Theorem~\ref{thm:bound2}, it is straightforward to then show that
$\{\partial W^s(X^s_p)\}_{p \in J}$ is a continuous family of subsets
of $M$ (Corollary~\ref{cor:euc}).
Hence, if $M$ is a compact Riemannian manifold, this implies that
$\{\partial W^s(X^s_p)\}_{p \in J}$ is a Hausdorff continuous family of 
subsets of $M$ (Corollary~\ref{cor:comp}).
Finally, if $V_{p_0}$ is Morse-Smale on $M$ a compact Riemannian manifold,
using persistence of the so-called phase diagram of Morse-Smale vector
fields under perturbation \cite{Pa69}, one can show that for any
$C^1$ continuous family of vector fields $\{V_p \}_{p \in J}$ containing $V_{p_0}$,
and for $J$ sufficiently small, $\{\partial W^s(X^s_p)\}_{p \in J}$
is a Hausdorff continuous family of subsets of $M$ 
(Corollary~\ref{cor:morse}).  Analogous to $W^s(X^s_J)$, for each $i \in I$, let $W^s(X^i_J) = \sqcup_{p \in J} W^s(X^i_p)$.

\begin{theorem}\label{thm:bound2}
Let $M$ be either a compact Riemannian manifold or Euclidean space,
and let $\{V_p\}_{p \in J}$ be a family of vector
fields on $M$ continuous with respect to the weak $C^1$ topology
and satisfying Assumptions~\ref{as:wand2}-\ref{as:trans2}.
Let $\{X^i_{p_0}\}_{i \in I}$ denote the critical elements of $V_{p_0}$
in $\partial W^s(X^s_J) \cap M_{p_0}$.
Then in $M \times J$ for sufficiently small $J$,
$\partial W^s(X^s_J) = \sqcup_{p \in J} \partial W^s(X^s_p)
= \bigcup_{i \in I} W^s(X^i_J)$.
\end{theorem}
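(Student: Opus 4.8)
The plan is to bootstrap from the parameter-independent result (Theorem~\ref{thm:bound1}) by working on the product manifold $M \times J$. First I would observe that the augmented vector field $V(x,p) = (V_p(x),0)$ on $M \times J$ has $X^s_J$ as a normally hyperbolic invariant manifold (a ``curve'' of stable equilibria), so that $W^s(X^s_J) = \sqcup_{p \in J} W^s(X^s_p)$ is exactly the region of attraction of $X^s_J$ for the flow $\phi$ on $M \times J$. The critical elements of $V_{p_0}$ in $\partial W^s(X^s_J) \cap M_{p_0}$, namely $\{X^i_{p_0}\}_{i \in I}$, are hyperbolic in $M$ by Assumption~\ref{as:hyp2}; since $J$ carries the trivial flow, each $X^i_{p_0}$ is \emph{not} hyperbolic as a critical element of $V$ on $M \times J$, but it is normally hyperbolic, and by Remark~\ref{rem:hyp} it persists to a family $X^i_J = \sqcup_{p} X^i_p$ with $W^s(X^i_J) = \sqcup_p W^s(X^i_p)$ and $W^u(X^i_J) = \sqcup_p W^u(X^i_p)$ for $J$ sufficiently small. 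The key structural facts to establish are: (i) $\partial W^s(X^s_J)$ (boundary in $M\times J$) is invariant under $\phi$ and, by Assumption~\ref{as:inf2}, every forward orbit in it is bounded and has $\omega$-limit set equal to some $X^i_p$; (ii) the nonwandering set of $V$ restricted to a neighborhood of $\partial W^s(X^s_J)$ consists precisely of $\bigcup_{i \in I} X^i_J$ — this requires upgrading Assumption~\ref{as:wand2}, which is stated only at $p_0$, to all nearby $p$, which is where the finiteness of $I$ and persistence of the $X^i$ together with a compactness/continuity argument on $J_{\bar r}$ come in; and (iii) the transversality in Assumption~\ref{as:trans2}, stated at $p_0$, persists to an open set of parameters, so $W^s(X^i_p)$ and $W^u(X^j_p)$ remain transverse in $M$ for $p$ near $p_0$ (hence the corresponding stable/unstable manifolds on $M \times J$ are transverse, the extra $J$-direction being common to both).

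Granting (i)--(iii), I would run the argument of Theorem~\ref{thm:bound1} on $M \times J$ almost verbatim, with one caveat about the non-hyperbolicity in the $J$-direction: the Inclination Lemma and the stable-manifold decomposition arguments are applied ``fiberwise'' over $J$, or equivalently one notes that the product structure $V_p(x) \times 0$ means all the relevant local stable/unstable manifolds split as (local manifold in $M$) $\times$ ($J$-interval), so the inclination-lemma convergence from Section~\ref{sec:defs} applies within each $M_p$ and depends continuously on $p$. This yields the decomposition $\partial W^s(X^s_J) = \bigcup_{i \in I} W^s(X^i_J)$. The inclusion $\bigcup_i W^s(X^i_J) \subset \partial W^s(X^s_J)$ follows because each $W^s(X^i_p) \subset \partial W^s(X^s_p) \subset \partial W^s(X^s_J)$ once we know $X^i_p$ lies in $\partial W^s(X^s_p)$ for all nearby $p$ (a consequence of Assumption~\ref{as:inf2} together with persistence); the reverse inclusion $\partial W^s(X^s_J) \subset \bigcup_i W^s(X^i_J)$ is the substantive direction, obtained by taking $x \in \partial W^s(X^s_J) \cap M_p$, using boundedness of its forward orbit and Assumption~\ref{as:inf2} to get $\omega(x) = X^i_p$ for some $i$, hence $x \in W^s(X^i_p) \subset W^s(X^i_J)$.

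For the remaining equality $\partial W^s(X^s_J) = \sqcup_{p\in J}\partial W^s(X^s_p)$: the inclusion $\supset$ is noted in the text to always hold. For $\subset$, take $x \in \partial W^s(X^s_J) \cap M_p$; by the decomposition just proved, $x \in W^s(X^i_p)$ for some $i$, and since the same decomposition applied at the fixed parameter $p$ via Theorem~\ref{thm:bound1} (whose hypotheses Assumptions~\ref{as:wand1}--\ref{as:trans1} now hold for $V_p$ by (ii)--(iii) and Assumption~\ref{as:inf2}) gives $\partial W^s(X^s_p) = \bigcup_i W^s(X^i_p)$, we get $x \in \partial W^s(X^s_p)$. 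Conversely the pieces $W^s(X^i_p)$ making up $\bigcup_i W^s(X^i_J)$ are exactly the pieces of $\partial W^s(X^s_p)$, so the disjoint-union-over-$p$ reassembles correctly.

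I expect the main obstacle to be step (ii): Assumption~\ref{as:wand2} only controls $\Omega(V_{p_0})$ near $\partial W^s(X^s_J)\cap M_{p_0}$, and upper semicontinuity of the nonwandering set does not, on its own, forbid new nonwandering points from appearing in $\partial W^s(X^s_J)\cap M_p$ for $p\neq p_0$ arbitrarily close to $p_0$ — indeed Example~\ref{ex:haus} and the example of \cite{Fi18b} show precisely this failure mode in the absence of Assumption~\ref{as:inf2}. The resolution must come from Assumption~\ref{as:inf2}, which directly postulates $\Omega(V_p) \cap (\partial W^s(X^s_J)\cap M_p) = \bigcup_i X^i_p$ for \emph{all} $p\in J$ together with boundedness of forward orbits; so the real work is to show this assumption is consistent (i.e.\ that $\partial W^s(X^s_J)\cap M_p$ is nonempty, closed, invariant, and that the $X^i_p$ genuinely lie in it for $p$ near $p_0$) and to assemble it with the persistence statements of Remark~\ref{rem:hyp} and the transversality persistence into hypotheses under which the fixed-parameter Theorem~\ref{thm:bound1} can be invoked uniformly for $p \in J$ with $J$ shrunk appropriately. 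A secondary technical point is making the ``fiberwise'' application of the Inclination Lemma on the degenerate product $M \times J$ rigorous — but this should be routine given the product form of $V$.
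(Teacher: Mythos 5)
Your overall architecture (work on $M\times J$, exploit the product structure $V=(V_p,0)$, use Assumption~\ref{as:inf2} for the $\omega$-limit structure, and reduce to fiberwise arguments) matches the paper's, but there is a genuine gap at the crux. You assert that ``$X^i_p$ lies in $\partial W^s(X^s_p)$ for all nearby $p$'' is ``a consequence of Assumption~\ref{as:inf2} together with persistence.'' It is not: Assumption~\ref{as:inf2} only places $X^i_p$ inside $\partial W^s(X^s_J)\cap M_p$, the slice of the boundary taken in $M\times J$, and Example~\ref{ex:haus} shows this slice can be strictly larger than $\partial W^s(X^s_p)\times\{p\}$. Bridging that gap --- i.e., showing $W^u(X^i_{p_0})\cap W^s(X^s_{p_0})\neq\emptyset$ \emph{within the fiber} $M_{p_0}$, so that $X^i_{p_0}\subset\partial W^s(X^s_{p_0})$ and, by persistence of this (trivially transverse, since $W^s(X^s_p)$ is open) intersection, $X^i_p\subset\partial W^s(X^s_p)$ for nearby $p$ --- is the substantive content of the theorem. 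In the paper it occupies Lemma~\ref{lem:tech2} (a parametrized disk-family construction producing a neighborhood of $X^i_{p_0}$ that is open in $M\times J$, not merely in $M_{p_0}$), Lemma~\ref{lem:bound2} (a double limit $r\to 0$ then $\epsilon\to 0$ that forces an intersection point of $W^u(X^i_J)$ with $\overline{W^s}(X^s_J)$ to land in the fiber $M_{p_0}$), and Lemma~\ref{lem:int2} (the heteroclinic descent run inside $M_{p_0}$). Your ``fiberwise Inclination Lemma'' remark gestures at the first of these but does not address why the resulting intersection must occur at $p_0$ rather than at some nearby parameter; this matters because the transversality and finiteness-of-$\Omega$ hypotheses needed for the descent are only assumed at $p_0$.

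A secondary problem is your step (iii) and the plan to invoke Theorem~\ref{thm:bound1} at each fixed $p$: transversality of the \emph{global} (noncompact) manifolds $W^s(X^i_p)$ and $W^u(X^j_p)$ does not persist under perturbation in general (only transversality at points of intersection of compact pieces does), and Assumption~\ref{as:wand1} for $V_p$ with $p\neq p_0$ (finiteness of $\Omega(V_p)$ on a \emph{neighborhood} of $\partial W^s(X^s_p)$ in $M$) is neither assumed nor derivable from Assumption~\ref{as:inf2}, which controls $\Omega(V_p)$ only on $\partial W^s(X^s_J)\cap M_p$ itself. The paper deliberately avoids needing either: at $p\neq p_0$ it uses only the persistence of the nonempty intersection $\phi_T(W^u_{\text{loc}}(X^i_p))\cap W^s_{\text{loc}}(X^s_p)$ with the open set $W^s(X^s_p)$, plus the Inclination-Lemma half of the fixed-parameter proof, to conclude $W^s(X^i_p)\subset\partial W^s(X^s_p)$; the full Morse--Smale machinery is exercised only in the fiber $M_{p_0}$.
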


\begin{corollary}\label{cor:euc}
Let $M = \mathbb{R}^n$ and let $\{V_p\}_{p \in J}$ be a weak 
$C^1$ continuous family of vector
fields on $M$ satisfying Assumptions~\ref{as:wand2}-\ref{as:trans2}.
Then $\{\partial W^s(X^s_p)\}_{p \in J}$ is a Chabauty continuous
family of subsets of $M$.
\end{corollary}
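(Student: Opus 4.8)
The plan is to deduce Corollary~\ref{cor:euc} directly from Theorem~\ref{thm:bound2} using the characterization of Chabauty continuity recorded in Section~\ref{sec:defs}. Recall that to show $\{\partial W^s(X^s_p)\}_{p \in J}$ is a Chabauty continuous family of subsets of $M = \mathbb{R}^n$, it suffices to exhibit a map $F: J \to \overline{\mathcal{C}}(M)$ with $F(p) = \partial W^s(X^s_p) \cup \{\infty\}$ such that for every $p \in J$ and every sequence $p_n \to p$ in $J$ (after passing to $J$ sufficiently small, as guaranteed by the theorem), one has $\limsup_{n \to \infty} F(p_n) \subset F(p)$ and $F(p) \subset \liminf_{n \to \infty} F(p_n)$. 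Throughout, I would work in the one-point compactification $S^n = \mathbb{R}^n \cup \{\infty\}$, and note that since the assumptions only ensure the conclusion for $J$ sufficiently small, it is harmless to shrink $J$ at the outset.

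First I would establish the $\limsup$ inclusion. Let $x \in \limsup_{n} F(p_n)$, so there is a subsequence $p_{n_m} \to p$ and points $x_{n_m} \in \partial W^s(X^s_{p_{n_m}}) \cup \{\infty\}$ with $x_{n_m} \to x$ in $S^n$. If infinitely many $x_{n_m} = \infty$ then $x = \infty \in F(p)$ and we are done, so assume $x_{n_m} \in \partial W^s(X^s_{p_{n_m}}) \subset M$ for all $m$. If $x = \infty$ again we are done; otherwise $x \in M$. By Theorem~\ref{thm:bound2}, $(x_{n_m}, p_{n_m}) \in \partial W^s(X^s_J)$ for each $m$, and since $\partial W^s(X^s_J)$ is closed in $M \times J$ and $(x_{n_m}, p_{n_m}) \to (x,p)$, we get $(x,p) \in \partial W^s(X^s_J)$. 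Invoking Theorem~\ref{thm:bound2} once more, $\partial W^s(X^s_J) = \sqcup_{q \in J} \partial W^s(X^s_q)$, so the fiber over $p$ is exactly $\partial W^s(X^s_p)$; hence $x \in \partial W^s(X^s_p)$ and $x \in F(p)$.

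Next I would establish $F(p) \subset \liminf_n F(p_n)$. Since $\infty \in F(p_n)$ for all $n$, trivially $\infty \in \liminf_n F(p_n)$. Now fix $x \in \partial W^s(X^s_p) \subset M$; I must produce $x_n \in \partial W^s(X^s_{p_n})$ with $x_n \to x$. By Theorem~\ref{thm:bound2}, $\partial W^s(X^s_p) = \bigcup_{i \in I} W^s(X^i_p)$, so $x \in W^s(X^i_p)$ for some fixed $i \in I$. The family of critical elements $\{X^i_q\}_{q \in J}$ is a $C^1$ family of hyperbolic critical elements (Remark~\ref{rem:hyp}), and its stable manifolds vary continuously with the parameter; more precisely, using the persistence of $W^s_{\text{loc}}(X^i_q)$ together with the fact that $W^s(X^i_q) = \bigcup_{t \le 0} \phi_t(W^s_{\text{loc}}(X^i_q))$ and the continuity of the flow $\phi$ of $V$ on $M \times J$, one shows that for $x \in W^s(X^i_p)$ there exist $x_n \in W^s(X^i_{p_n})$ with $x_n \to x$. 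Then $x_n \in W^s(X^i_{p_n}) \subset \bigcup_{j \in I} W^s(X^j_{p_n}) = \partial W^s(X^s_{p_n})$ by Theorem~\ref{thm:bound2} applied at $p_n$, so $x \in \liminf_n F(p_n)$. Combining both inclusions with the discussion in Section~\ref{sec:defs} gives $\lim_n d_h(F(p_n), F(p)) = 0$, which is exactly Chabauty continuity of $\{\partial W^s(X^s_p)\}_{p \in J}$.

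The main obstacle is the lower-semicontinuity step, i.e.\ producing the approximating sequence $x_n \to x$ with $x_n \in W^s(X^i_{p_n})$. The local stable manifolds $W^s_{\text{loc}}(X^i_q)$ persist and vary $C^1$-continuously with $q$, so approximation is immediate for $x \in W^s_{\text{loc}}(X^i_p)$; for general $x \in W^s(X^i_p)$ one has $\phi_T(x) \in W^s_{\text{loc}}(X^i_p)$ for some $T \ge 0$ (working in $M \times J$ where the flow fixes the parameter coordinate), pulls back an approximating sequence $y_n \to \phi_T(x)$ with $y_n \in W^s_{\text{loc}}(X^i_{p_n})$, and sets $x_n$ to be the $M$-component of $\phi_{-T}(y_n, p_n)$; continuity of $\phi$ then gives $x_n \to x$. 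The one subtlety to handle carefully is that the approximation must be uniform enough across the (finitely many) indices $i \in I$ and must respect the Chabauty/one-point-compactification framework when $x$ or the $x_n$ could a priori drift toward infinity — but Assumption~\ref{as:inf2} (boundedness of forward orbits in $\partial W^s(X^s_J) \cap M_p$) combined with compactness of the local stable manifolds keeps everything in a bounded region, so $\infty$ plays no role beyond making $F(p)$ closed in $S^n$.
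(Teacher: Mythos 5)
Your proposal is correct and follows essentially the same route as the paper: the $\limsup$ inclusion via closedness of $\partial W^s(X^s_J)$ together with the fiberwise decomposition $\partial W^s(X^s_J) = \sqcup_{p} \partial W^s(X^s_p)$ from Theorem~\ref{thm:bound2}, and the $\liminf$ inclusion by flowing $x$ into $W^s_{\text{loc}}(X^i_p)$, using the $C^1$ parameter dependence of the local stable manifolds, and pulling back with $\phi_{-T}$. The only (harmless) difference is that you handle the point at infinity in the one-point compactification explicitly, which the paper leaves implicit.
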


\begin{corollary}\label{cor:comp}
Let $M$ be a compact Riemannian manifold
and let $\{V_p\}_{p \in J}$ be a $C^1$ continuous family of vector
fields on $M$ satisfying Assumptions~\ref{as:wand2}-\ref{as:trans2}.
Then $\{\partial W^s(X^s_p)\}_{p \in J}$ is a Hausdorff continuous
family of subsets of $M$.
\end{corollary}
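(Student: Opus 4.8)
The plan is to derive this from Corollary~\ref{cor:euc}'s argument structure, but specialized to the compact setting where the Hausdorff metric on $\mathcal{C}(M)$ is directly available without passing to a one-point compactification. First I would invoke Theorem~\ref{thm:bound2}: for $J$ sufficiently small, $\partial W^s(X^s_J) = \sqcup_{p \in J} \partial W^s(X^s_p)$ as subsets of $M \times J$. The key observation is that $\partial W^s(X^s_J)$ is a \emph{closed} subset of $M \times J$ (it is a topological boundary), and since $M$ is compact, for each $p$ the slice $\partial W^s(X^s_J) \cap M_p = \partial W^s(X^s_p)$ is a nonempty (by Assumption~\ref{as:wand2}, $k \geq 1$) closed, hence compact, subset of $M$. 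So the candidate map $F: J \to \mathcal{C}(M)$ sending $p \mapsto \partial W^s(X^s_p)$ is well-defined, and it remains to show $F$ is continuous. Since $J$ is first countable (being a manifold), by the criterion recalled in Section~\ref{sec:defs} it suffices to show that for every $p_n \to p$ in $J$, $\limsup_{n} F(p_n) = \liminf_n F(p_n) = F(p)$.

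The two set-theoretic inclusions I would establish are the standard upper and lower semicontinuity of the slice map, both exploiting that $\partial W^s(X^s_J)$ is closed. For $\limsup_n F(p_n) \subset F(p)$: if $x_{n_m} \in \partial W^s(X^s_{p_{n_m}})$ and $x_{n_m} \to x$, then $(x_{n_m}, p_{n_m}) \in \partial W^s(X^s_J)$ and $(x_{n_m}, p_{n_m}) \to (x, p)$ in the compact space $M \times J$; since $\partial W^s(X^s_J)$ is closed, $(x,p) \in \partial W^s(X^s_J)$, so by Theorem~\ref{thm:bound2} again $x \in \partial W^s(X^s_p) = F(p)$. The reverse inclusion $F(p) \subset \liminf_n F(p_n)$ is the genuinely substantive direction: given $x \in \partial W^s(X^s_p)$, I need a sequence $x_n \in \partial W^s(X^s_{p_n})$ with $x_n \to x$. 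Here I would use the second equality of Theorem~\ref{thm:bound2}, $\partial W^s(X^s_J) = \bigcup_{i \in I} W^s(X^i_J)$, together with Assumption~\ref{as:inf2} which guarantees the \emph{same} index set $I$ of critical elements persists for all $p \in J$: so $x \in W^s(X^i_p)$ for some $i \in I$, and the forward orbit of $x$ under $V_p$ converges to $X^i_p$. Using the Inclination Lemma and continuous dependence of the local stable manifolds $W^s_{\mathrm{loc}}(X^i_p)$ on $p$ (Remark~\ref{rem:hyp}: the $X^i_p$ and their stable manifolds persist and vary $C^1$-continuously), I would pull back a nearby point of $W^s_{\mathrm{loc}}(X^i_{p_n})$ along the flow $\phi_{-T}$ for a suitably large time $T$ and show it lands near $x$ inside $W^s(X^i_{p_n}) \subset \partial W^s(X^s_{p_n})$; continuity of the flow $\phi$ on $M \times J$ makes this approximation uniform for $n$ large.

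I expect the lower-semicontinuity direction (constructing the approximating sequence $x_n \to x$) to be the main obstacle, since it requires genuinely tracking how the decomposition $\partial W^s(X^s_p) = \bigcup_{i} W^s(X^i_p)$ behaves under perturbation rather than merely exploiting closedness. However, much of this work is already packaged: Theorem~\ref{thm:bound2} delivers the persistence of the full decomposition, and the persistence and $C^1$-continuity of local stable manifolds (Remark~\ref{rem:hyp}) plus the Inclination Lemma are exactly the tools needed to go from ``$x$ is in some $W^s(X^i_p)$'' to ``there are nearby points in $W^s(X^i_{p_n})$.'' So in practice the proof reduces to: (i) note well-definedness of $F$ via compactness of $M$ and $k \geq 1$; (ii) verify $\limsup_n F(p_n) \subset F(p)$ by closedness of $\partial W^s(X^s_J)$; (iii) verify $F(p) \subset \liminf_n F(p_n)$ via the $W^s(X^i_J)$ decomposition, flow continuity, and the Inclination Lemma; (iv) conclude $F(p_n) \to F(p)$ in $d_h$ by the $\liminf$/$\limsup$ criterion from Section~\ref{sec:defs}. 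One should also double-check that the ``sufficiently small $J$'' from Theorem~\ref{thm:bound2} can be taken once and for all so that the family $\{\partial W^s(X^s_p)\}_{p \in J}$ is continuous on that $J$; this is immediate since the hypotheses of Theorem~\ref{thm:bound2} are assumed on all of $J$ after the initial shrinking.
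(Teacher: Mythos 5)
Your proposal is correct and follows essentially the same route as the paper: the paper proves Corollary~\ref{cor:comp} by citing Corollary~\ref{cor:euc}, whose proof is exactly your $\liminf$/$\limsup$ argument (closedness of $\partial W^s(X^s_J)$ plus Theorem~\ref{thm:bound2} for the $\limsup$ inclusion, and backward flow of points in the $C^1$-continuously varying local stable manifolds $W^s_{\text{loc}}(X^i_{p_n})$ for the $\liminf$ inclusion), followed by the observation that Chabauty and Hausdorff continuity coincide for compact $M$. The only cosmetic difference is that the paper's lower-semicontinuity step needs no Inclination Lemma, only the parametrization $F^i_s$ of the local stable manifolds and the diffeomorphism $\phi_{-T}$.
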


\begin{corollary}\label{cor:morse}
Let $M$ be a compact Riemannian manifold and let $V_{p_0}$ be a
Morse-Smale vector field on $M$.  Then for any $C^1$ continuous
family of vector fields $\{V_p\}_{p \in J}$ on $M$ with $p_0 \in J$,
for sufficiently small $J$,
$\{\partial W^s(X^s_p)\}_{p \in J}$ is a Hausdorff continuous family
of subsets of $M$.
\end{corollary}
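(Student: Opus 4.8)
The plan is to reduce the corollary to Corollary~\ref{cor:comp} by showing that, after shrinking $J$, the family $\{V_p\}_{p\in J}$ satisfies Assumptions~\ref{as:wand2}-\ref{as:trans2}; the only ingredient needed beyond the definition of a Morse-Smale vector field is the structural stability of Morse-Smale flows on compact manifolds, in the form of persistence of the phase diagram \cite{Pa69}. Throughout, $X^s_{p_0}$ denotes the stable equilibrium of $V_{p_0}$ under consideration (as in the standing setup of Section~\ref{sec:params}) and $\{X^s_p\}_{p\in J}$ its persistence family.

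First I would dispatch Assumptions~\ref{as:wand2}, \ref{as:hyp2}, and \ref{as:trans2} at $p_0$. Since $V_{p_0}$ is Morse-Smale and $M$ is compact, $\Omega(V_{p_0})$ is a finite union of hyperbolic critical elements all of whose stable and unstable manifolds are pairwise transverse, so taking the neighborhood $N$ of Assumption~\ref{as:wand2} to be all of $M$ gives all three assumptions at once, provided the nonemptiness clause $k\geq 1$ holds. For that, note that $W^s(X^s_{p_0})$ is an open, noncompact submanifold of the compact manifold $M$ (being the basin of a hyperbolic sink), hence a proper subset, so $\partial W^s(X^s_{p_0})\neq\emptyset$; since $\sqcup_{p\in J}\partial W^s(X^s_p)\subset\partial W^s(X^s_J)$, we get $\partial W^s(X^s_J)\cap M_{p_0}\supset\partial W^s(X^s_{p_0})\neq\emptyset$. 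As $\partial W^s(X^s_{p_0})$ is a nonempty compact set invariant under $V_{p_0}$, it contains a minimal set, which lies in $\Omega(V_{p_0})$ and is therefore a critical element; hence $I\neq\emptyset$ and $k\geq 1$.

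Next I would treat Assumption~\ref{as:inf2}. Boundedness of forward orbits is automatic since $M$ is compact, so the only substantive requirement is that for every $p\in J$ the nonwandering points of $V_p$ lying in $\partial W^s(X^s_J)\cap M_p$ coincide with $\bigcup_{i\in I}X^i_p$, i.e., that the collection of critical elements sitting in the region of attraction boundary does not change as $p$ varies. This is where structural stability enters. By weak $C^1$ continuity of $p\mapsto V_p$ (which agrees with strong $C^1$ continuity since $M$ is compact), shrink $J$ so that every $V_p$ lies in a $C^1$ neighborhood of $V_{p_0}$ on which the phase diagram persists \cite{Pa69}; then each $V_p$ is Morse-Smale, its critical elements are exactly the persistence families of those of $V_{p_0}$, and the incidence relations $W^u(X^i_p)\cap W^s(X^j_p)\neq\emptyset$ are independent of $p$. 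Since the property that a critical element lies in $\partial W^s(X^s_p)$ is determined by this incidence data (by the Inclination Lemma it amounts to the existence of a chain of incidences from that critical element down to $X^s_p$, and Theorem~\ref{thm:bound1} applied fiberwise then writes $\partial W^s(X^s_p)$ as the union of the corresponding stable manifolds), it follows that the boundary critical elements form the fixed family $\{X^i_p\}_{i\in I}$ for all $p\in J$, that $\partial W^s(X^s_p)=\bigcup_{i\in I}W^s(X^i_p)$, and that $\partial W^s(X^s_J)\cap M_p=\partial W^s(X^s_p)$; Assumption~\ref{as:inf2} follows. (Alternatively, one may invoke \cite[Theorem~4.6]{Fi18b}, by which Assumption~\ref{as:inf2} is unnecessary when $M$ is compact.) With Assumptions~\ref{as:wand2}-\ref{as:trans2} now in force for $\{V_p\}_{p\in J}$ on the shrunken $J$, Corollary~\ref{cor:comp} applies and yields that $\{\partial W^s(X^s_p)\}_{p\in J}$ is a Hausdorff continuous family of subsets of $M$.

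I expect the third step to be the main obstacle, namely converting the structural-stability statement, which a priori only asserts the existence of a global topological equivalence carrying $V_{p_0}$ to $V_p$, into the precise, uniform-in-$p$ assertion that the critical elements lying in the region of attraction boundary are unaffected by the perturbation, so that $\partial W^s(X^s_J)$ restricts fiberwise to $\partial W^s(X^s_p)$ with a single index set $I$. This is exactly the mechanism that rules out the ``jumps'' of Example~\ref{ex:haus} and of the $\mathbb{R}^2$ example of \cite{Fi18b}, and care is needed both in extracting the incidence data of the phase diagram from the topological equivalence and in choosing all the relevant neighborhoods, together with the single shrinking of $J$, uniformly.
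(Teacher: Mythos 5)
Your overall architecture matches the paper's: verify Assumptions~\ref{as:wand2}--\ref{as:trans2} for the family and invoke Corollary~\ref{cor:comp}. But the key step — the $\subset$ direction of Assumption~\ref{as:inf2} — has a gap that is essentially a circularity. You correctly show, via phase-diagram persistence and Theorem~\ref{thm:bound1} applied fiberwise, which critical elements lie in the \emph{fiberwise} boundary $\partial W^s(X^s_p)$, and that this index set is independent of $p$. But Assumption~\ref{as:inf2} is a statement about $\partial W^s(X^s_J)\cap M_p$, the slice of the boundary taken in $M\times J$, which always contains $\partial W^s(X^s_p)\times\{p\}$ and may a priori be strictly larger — that is exactly the failure mode of Example~\ref{ex:haus}, and it is the reason the assumptions of Section~\ref{sec:params} are phrased in terms of $\partial W^s(X^s_J)$ in the first place. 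Your sentence ``it follows that \dots $\partial W^s(X^s_J)\cap M_p=\partial W^s(X^s_p)$'' is precisely the conclusion of Theorem~\ref{thm:bound2}, whose proof requires Assumption~\ref{as:inf2}; so as written you assume what you are verifying. The paper closes this gap with Lemma~\ref{lem:int2} (resting on the $M\times J$ tubular-neighborhood construction of Lemma~\ref{lem:tech2} and on Lemma~\ref{lem:bound2}): if a critical element is contained in $\partial W^s(X^s_J)$, then its unstable manifold meets $\overline{W^s}(X^s_J)$ \emph{within the same fiber}, hence meets $W^s(X^s_p)$, hence the critical element already lies in the fiberwise boundary and belongs to $I$. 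That fiber-pinning step is the real content and is missing from your argument; your closing paragraph identifies it as ``the main obstacle'' but does not supply it. (Your parenthetical appeal to \cite[Theorem 4.6]{Fi18b} would sidestep the issue, but it outsources exactly this hard step; the paper's own proof does the work directly.)

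Two smaller points. First, taking $N=M$ in Assumption~\ref{as:wand2} makes $I$ index \emph{all} critical elements of $V_{p_0}$, including $X^s_{p_0}$ and any others not in the RoA boundary; Assumption~\ref{as:inf2} then cannot hold, since it would force $X^s_p\subset\partial W^s(X^s_J)$. The paper instead uses normality of $M$ to choose $N$ containing $\partial W^s(X^s_J)\cap M_{p_0}$ but disjoint from the remaining critical elements, so that $I$ indexes exactly the boundary critical elements; you need the same shrinking. Second, your argument that $k\geq 1$ (via noncompactness of the basin of a hyperbolic sink and existence of a minimal set in the resulting nonempty compact invariant boundary) is correct and is actually more careful than the paper, which does not address nonemptiness explicitly.
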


\subsection{Time in Neighborhood of Special Critical Element} \label{sec:time}

Recall from Section~\ref{sec:params} that $J$ is chosen to be a connected smooth manifold. Assume further, shrinking $J$ if necessary, that $\overline{J}$ is compact and convex.
Let $y:\overline{J} \to M$ send $p$ to the initial condition of $V_p$ and assume that
$y$ is $C^1$ over $\overline{J}$. We write $y_p := y(p)$ and, as with critical elements above, sometimes consider $y_p \in M$ and sometimes $y_p \in M \times \overline{J}$; the distinction should be clear from context. Then a parameter value $p^* \in \overline{J}$ is a boundary parameter value if and only if $y_{p^*} \in \partial W^s(X^s_{p^*})$. We restrict our attention to cases where $J$ contains points $p_1$ and $p_2$ such that $y_{p_1} \in W^s(X^s_{p_1})$ and $y_{p_2} \not\in W^s(X^s_{p_2})$. Let $R = \{p \in J:y_p \in W^s(X^s_p)\}$ and let $C = \{p \in \overline{J}:y_p \in \partial W^s(X^s_p)\}$. 
Then $R$ represents the set of parameters for which the system will recover
to the SEP, called the recovery set,
$C$ represents the set of boundary parameter values, and we let $\partial R$ denote the boundary of $R$ in $\overline{J}$.
Theorem~\ref{thm:multi} shows that $\partial R \subset C$ under the assumptions
of Section~\ref{sec:params}.
Furthermore, if $p_0 \in R$ is any parameter value in the recovery set and $J_0 \subset C$ is the set of parameter values which achieve minimum distance
from $p_0$ to $C$, then $J_0 \subset \partial R$ and
$J_0$ is the set of parameter values in $\partial R$ which achieve
minimum distance from $p_0$ to $\partial R$.


\begin{theorem}\label{thm:multi}
Let $M$ be either a compact Riemannian manifold or Euclidean space,
and let $\{V_p\}_{p \in J}$ be a family of vector
fields on $M$ continuous with respect to the weak $C^1$ topology
and satisfying Assumptions~\ref{as:wand2}-\ref{as:trans2}.
Let $y: \overline{J} \to M$ be $C^1$. Then $\partial R \subset C$. Fix any $p_0 \in R$ and let $J_0 = \{p^* \in C:d(p_0,p^*) = d_S(p_0,C)\}$.
Then $J_0$ is nonempty, $J_0 \subset \partial R$, and
$J_0 = \{p^* \in \partial R:d(p_0,p^*) = d_S(p_0,\partial R)\}$.
\end{theorem}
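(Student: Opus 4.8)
The plan is to derive Theorem~\ref{thm:multi} from Theorem~\ref{thm:bound2} by what is essentially formal point-set reasoning, using compactness and convexity of $\overline J$ only for the distance-minimization assertions. Shrink $J$ if necessary so that the conclusion of Theorem~\ref{thm:bound2} is available on an open set containing $\overline J$, and read $R$ and $C$ relative to $\overline J$. Write $\mathcal W=\sqcup_{p}W^s(X^s_p)\subset M\times\overline J$ and, by Theorem~\ref{thm:bound2}, $\partial\mathcal W=\sqcup_{p}\partial W^s(X^s_p)$, and let $f\colon\overline J\to M\times\overline J$, $f(p)=(y_p,p)$, which is $C^1$ and hence continuous. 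The first step is to record that $\mathcal W$ is open in $M\times\overline J$: since $X^s_{p_0}$ is stable hyperbolic it persists and is uniformly attracting in the parameter, so there is an open neighborhood $O$ of $X^s_{\overline J}$ in $M\times\overline J$ with $O\subset\mathcal W$, and then $\mathcal W=\bigcup_{t\ge0}\phi_{-t}(O)$ is a union of open sets because $\mathcal W$ is backward invariant and every orbit in $\mathcal W$ eventually enters $O$. Consequently $M\times\overline J=\mathcal W\sqcup\partial\mathcal W\sqcup\mathcal E$, where $\partial\mathcal W=\overline{\mathcal W}\setminus\mathcal W$ is closed and $\mathcal E=(M\times\overline J)\setminus\overline{\mathcal W}$ is open; pulling back by $f$, the sets $R=f^{-1}(\mathcal W)$ and $f^{-1}(\mathcal E)$ are open in $\overline J$, $C=f^{-1}(\partial\mathcal W)$ is closed, $R\cap C=\emptyset$, and $\overline J=R\sqcup C\sqcup f^{-1}(\mathcal E)$.

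Given this decomposition, $\partial R\subset C$ is immediate: $R$ is open, so $\partial R=\overline R\setminus R\subset\overline J\setminus R=C\sqcup f^{-1}(\mathcal E)$, and no point $p\in\partial R$ can lie in the open set $f^{-1}(\mathcal E)$, since that set would then be a neighborhood of $p$ disjoint from $R$, contradicting $p\in\overline R$. For nonemptiness of $J_0$, note first that $C\neq\emptyset$: we may assume $p_2\notin C$ (otherwise we are done), so $f(p_2)\in\mathcal E$ while $f(p_1)\in\mathcal W$, and if $C$ were empty then $\overline J=f^{-1}(\mathcal W)\sqcup f^{-1}(\mathcal E)$ would disconnect the connected set $\overline J$. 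Since $C$ is a nonempty closed subset of the compact set $\overline J$, the infimum $d_S(p_0,C)$ is attained, so $J_0\neq\emptyset$.

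For $J_0\subset\partial R$, fix $p^*\in J_0$ and consider the segment $\gamma(t)=(1-t)p_0+tp^*$, which lies in $\overline J$ by convexity. For $t\in[0,1)$ we have $d(p_0,\gamma(t))=t\,d(p_0,p^*)<d_S(p_0,C)$, so $\gamma(t)\notin C$, i.e.\ $f(\gamma(t))\in\mathcal W\cup\mathcal E$; hence $\gamma^{-1}(f^{-1}(\mathcal W))$ and $\gamma^{-1}(f^{-1}(\mathcal E))$ are disjoint relatively open subsets covering the connected interval $[0,1)$, and since $0$ lies in the former (because $p_0\in R$), connectedness forces $[0,1)\subset\gamma^{-1}(f^{-1}(\mathcal W))$, i.e.\ $\gamma([0,1))\subset R$. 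Letting $t\to1^-$ gives $p^*\in\overline R$, while $p^*\in C$ and $R\cap C=\emptyset$ give $p^*\notin R$; thus $p^*\in\overline R\setminus R=\partial R$. Finally, $\partial R\subset C$ yields $d_S(p_0,C)\le d_S(p_0,\partial R)$, while $J_0\subset\partial R$ together with $d(p_0,p^*)=d_S(p_0,C)$ for $p^*\in J_0$ yields the reverse inequality, so $d_S(p_0,\partial R)=d_S(p_0,C)$; consequently $\{p^*\in\partial R:d(p_0,p^*)=d_S(p_0,\partial R)\}=\partial R\cap J_0=J_0$, using $\partial R\subset C$ and $J_0\subset\partial R$.

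Essentially all of the mathematical substance is already contained in Theorem~\ref{thm:bound2}; within the present argument the only ingredient beyond formal topology is the openness of $\mathcal W$ in $M\times\overline J$, which I expect to be routine given persistence of the stable hyperbolic equilibrium and continuous dependence of the flow on state and parameter, together with the bookkeeping needed to make the decomposition of Theorem~\ref{thm:bound2} available at boundary points of $\overline J$ (handled by the slight enlargement of the parameter set). The step I expect to require the most care in writing is the convexity/connectedness argument establishing $J_0\subset\partial R$, since it is the only place where the geometric hypothesis on $\overline J$ is genuinely used and where one must verify that the entire half-open segment $\gamma([0,1))$, rather than merely a relative interior, remains inside $R$.
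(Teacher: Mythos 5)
Your proposal is correct and follows essentially the same route as the paper: both arguments rest on Theorem~\ref{thm:bound2} to identify $\partial W^s(X^s_J)$ with $\sqcup_p \partial W^s(X^s_p)$, on openness of $W^s(X^s_J)$ (which the paper records as Lemma~\ref{lem:open2}), on compactness of $C$ in $\overline{J}$ for nonemptiness of $J_0$, and on a minimal geodesic plus a connectedness argument for $J_0 \subset \partial R$. Your pullback of the open--closed--open tripartition $\mathcal{W} \sqcup \partial\mathcal{W} \sqcup \mathcal{E}$ by $p \mapsto (y_p,p)$ is a slightly tidier packaging of the paper's pointwise sequence arguments, but not a different proof.
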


Theorem~\ref{thm:multi} justifies the method of determining or
approximating $R$ by computing the closest boundary parameter values.
Then Corollary~\ref{cor:multi} shows that for each boundary parameter
value $p^*$ there exists a critical element $X_{p^*}^*$, called the controlling
critical element, such that $y_{p^*}$ lies in its stable manifold.

\begin{corollary}\label{cor:multi}
Assume the conditions of Theorem~\ref{thm:multi}.
Fix any $p^* \in J_0$.
Then there exists a unique critical element $X^*_{p^*} \subset \partial W^s(X^s_J)$, 
called the {\it controlling critical element} corresponding to $p^*$,
such that $y_{p^*} \in W^s(X^*_{p^*})$.
\end{corollary}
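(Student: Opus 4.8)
The plan is to read off existence of $X^*_{p^*}$ directly from the boundary decomposition in Theorem~\ref{thm:bound2}, and to obtain uniqueness from the elementary fact that the forward orbit of a point can converge to at most one critical element.

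First I would note that, since $p^* \in J_0 \subset C$ by Theorem~\ref{thm:multi}, we have $y_{p^*} \in \partial W^s(X^s_{p^*})$, where the boundary is taken in $M$. By Theorem~\ref{thm:bound2}, for $J$ sufficiently small $\partial W^s(X^s_J) = \sqcup_{p \in J}\partial W^s(X^s_p) = \bigcup_{i \in I} W^s(X^i_J)$ in $M \times J$; intersecting both sides with $M_{p^*}$ and using that $W^s(X^i_J) \cap M_{p^*} = W^s(X^i_{p^*})$ gives $\partial W^s(X^s_{p^*}) = \bigcup_{i \in I} W^s(X^i_{p^*})$. Hence there is some $i \in I$ with $y_{p^*} \in W^s(X^i_{p^*})$, and I would set $X^*_{p^*} := X^i_{p^*}$. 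Because $X^i_{p^*} \subset W^s(X^i_{p^*}) \subset W^s(X^i_J) \subset \partial W^s(X^s_J)$, this critical element lies in $\partial W^s(X^s_J)$, as required.

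For uniqueness, suppose $y_{p^*} \in W^s(X^i_{p^*}) \cap W^s(X^j_{p^*})$ for $i,j \in I$. By the characterization of stable manifolds recalled in Section~\ref{sec:defs}, the forward $V_{p^*}$-orbit $\gamma$ of $y_{p^*}$ converges both to $X^i_{p^*}$ and to $X^j_{p^*}$; since the $\omega$-limit set of a point whose forward orbit converges to a critical element equals that critical element, $X^i_{p^*} = \omega(\gamma) = X^j_{p^*}$. As the perturbed critical elements $X^1_{p^*},\dots,X^k_{p^*}$ remain pairwise distinct for $J$ small (Remark~\ref{rem:hyp}), this forces $i = j$, so $X^*_{p^*}$ is the unique critical element contained in $\partial W^s(X^s_J)$ with $y_{p^*} \in W^s(X^*_{p^*})$.

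I do not expect a genuine obstacle here: the corollary is essentially a bookkeeping consequence of Theorems~\ref{thm:multi} and~\ref{thm:bound2}, the only subtlety being the disjointness of the sets $W^s(X^i_{p^*})$, which is immediate from uniqueness of the limiting critical element. The one point to verify carefully is that every $X^i_{p^*}$ genuinely appears on the right-hand side of the decomposition — that is, that each $X^i_{p^*}$ is a critical element of $V_{p^*}$ lying in $\partial W^s(X^s_J) \cap M_{p^*}$ — which is exactly what Assumption~\ref{as:inf2} together with Remark~\ref{rem:hyp} guarantees.
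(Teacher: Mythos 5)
Your proposal is correct and follows essentially the same route as the paper: Theorem~\ref{thm:multi} gives $p^* \in C$, hence $y_{p^*} \in \partial W^s(X^s_{p^*})$, and the decomposition $\partial W^s(X^s_{p^*}) = \bigcup_{i \in I} W^s(X^i_{p^*})$ from Theorem~\ref{thm:bound2} yields existence, with uniqueness coming from the pairwise disjointness of the stable manifolds. The only difference is that you spell out the disjointness argument (a forward orbit has a single $\omega$-limit set) which the paper leaves implicit.
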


For a fixed boundary parameter value $p^*$, by Corollary~\ref{cor:multi} there exists a unique controlling critical element $X^*_{p^*}$.  Since $X^*_{p^*} \subset W^s(X^s_J)$ is a critical element, by Assumption~\ref{as:inf2} there exists $j \in I$ such that $X^*_{p^*} = X^j_{p^*}$.  Furthermore, by Remark~\ref{rem:hyp}, as $X^j_{p^*}$ is hyperbolic, it persists over $p \in J$, and we can write $X^*_p = X^j_p$ for all $p \in J$. The notation $X^*_J = X^j_J$ is similarly defined.

Let $\gamma:[0,1] \to J$ be any $C^1$ path in $J$ such that
$\gamma([0,1)) \subset R$ and $\gamma(1) \in C$.
Let $X^*_{\gamma(1)}$ be the controlling critical element corresponding to $\gamma(1)$,
as in Corollary~\ref{cor:multi}.
Consider the following assumption regarding the path $\gamma$.

\begin{assumption}\label{as:neigh2}
Let $\gamma$ be a $C^1$ path in $J$ such that $\gamma([0,1)) \subset R$
and $\gamma(1) \in C$.
There exists a compact codimension-zero smooth embedded submanifold with
boundary $N$ in $M$ such that for $p \in \gamma([0,1])$,
$X^*_p$ is contained in the interior of $N$,
$X^s_p$ and $y_p$ are disjoint from $N$,
and the orbit of $y_p$ under $V_p$ has nonempty, transversal intersection with
$\partial N$.
\end{assumption}
  
\begin{remark}
Unlike Assumption~\ref{as:trans2}, the transversality
condition of Assumption~\ref{as:neigh2} can be easily checked directly 
by numerical simulation, and the neighborhood $N$ adjusted accordingly
if necessary.
In applications, $N$ is typically taken to be a closed ball
and its radius is adjusted to ensure the transversality condition
of Assumption~\ref{as:neigh2} holds. 
\end{remark}

\begin{remark}
Assumption~\ref{as:neigh2} also ensures that the initial conditions
and the stable equilibria do not intersect the neighborhood $N$,
and that the controlling critical element $X^*_J$ is contained in $N$.
\end{remark}

Let $\gamma$ and $N$ be as in Assumption~\ref{as:neigh2}.
Let $\tau_N:\gamma([0,1]) \to [0,\infty]$ be given by
$\tau_N(p) = \int_0^\infty \mathbb{1}_N(\phi(t,y_p,p)) dt$
where $\mathbb{1}_N$ is the indicator function of $N$, with $\mathbb{1}_N(x) = 1$ if $x \in N$ and $\mathbb{1}_N(x) = 0$ if
$x \not\in N$.
Therefore, $\tau_N(p)$ measures the length of time the orbit of $V_p$
with initial condition $y_p$ spends in $N$.
Theorem~\ref{thm:time} shows that $\tau_N$ is well-defined
and continuous over $\gamma([0,1])$.
Since $y_{\gamma(1)} \in W^s(X_{\gamma(1)}^*)$ and $X^*_{\gamma(1)} \subset N$,
it will follow that $\tau_N(p)$ diverges to infinity as $p$ approaches
$\gamma(1)$ along the path $\gamma$.

\begin{theorem}\label{thm:time}
Assume the conditions of Theorem~\ref{thm:multi}.
Fix any $p_0 \in J$, let
$J_0 = \{p^* \in \partial R:d(p_0,p^*) = d_S(p_0,\partial R)\}$,
and fix any $p^* \in J_0$.
By Corollary~\ref{cor:multi}, there exists a unique critical element
$X^*_{p^*} \subset \partial W^s(X^s_J)$ such that $y_{p^*} \in W^s(X^*_{p^*})$.
Let $\gamma:[0,1] \to J$ be a $C^1$ path satisfying Assumption~\ref{as:neigh2}
and such that $\gamma(0) = p_0$, $\gamma(1) = p^*$, and
$\gamma([0,1)) \subset R$.
Then $\tau_N:\gamma([0,1]) \to [0,\infty]$ is well-defined and
continuous. In particular,
$\lim_{s \to 1} \tau_N(\gamma(s)) = \tau_N(p^*) = \infty$.
\end{theorem}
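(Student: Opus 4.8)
The plan is to establish the three assertions of the theorem in order: first that $\tau_N$ is well-defined (i.e. the integral makes sense and is finite except possibly at $p^*$), then that it is continuous on $\gamma([0,1])$, and finally that $\tau_N(\gamma(s)) \to \infty$ as $s \to 1$. Throughout I will work with the flow $\phi$ of the combined vector field $V$ on $M \times \overline{J}$, which is $C^1$ and, since $\overline{J}$ is compact and the $V_p$ are complete, well-behaved. The key structural facts I will use are Corollary~\ref{cor:multi} (existence of the controlling critical element $X^*_{p^*}$ with $y_{p^*} \in W^s(X^*_{p^*})$), Theorem~\ref{thm:bound2} and Assumption~\ref{as:inf2} (which force $X^*_{p^*} = X^j_{p^*}$ for some fixed $j \in I$ and hence give a persistent family $X^*_p$), and Assumption~\ref{as:neigh2} (the fixed neighborhood $N$ with $X^*_p$ in its interior, $y_p$ and $X^s_p$ disjoint from it, and the orbit of $y_p$ crossing $\partial N$ transversally).

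\textbf{Well-definedness.} Fix $p \in \gamma([0,1])$. If $p \in \gamma([0,1)) \subset R$, then $y_p \in W^s(X^s_p)$, so $\phi(t,y_p,p) \to X^s_p$ as $t \to \infty$; since $X^s_p$ is disjoint from the closed set $N$ and convergence is eventual, the orbit spends only a bounded amount of time in $N$, so $\tau_N(p) < \infty$. (The integrand $\mathbb{1}_N(\phi(t,y_p,p))$ is measurable since $\phi$ is continuous and $N$ is closed, so the integral is always defined in $[0,\infty]$.) For $p = \gamma(1) = p^*$, we have $y_{p^*} \in W^s(X^*_{p^*})$ with $X^*_{p^*} \subset \operatorname{int} N$, so the orbit enters $N$ and, converging to a critical element inside $N$, never leaves a neighborhood of $X^*_{p^*}$ contained in $N$ for all large $t$; hence $\tau_N(p^*) = \infty$. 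This already gives the last sentence's equality $\tau_N(p^*) = \infty$ and reduces the limit claim to continuity from the left together with $\tau_N(p^*)=\infty$ — but actually the limit statement needs a genuine divergence argument, addressed below.

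\textbf{Continuity on $[0,1)$, and the divergence at $1$.} For $s$ in the interior, I would argue as follows. By Assumption~\ref{as:neigh2} the orbit of $y_p$ meets $\partial N$ transversally, so (for $p$ near a given $p_1 \in \gamma([0,1))$, using $C^1$ dependence of the flow on initial condition and parameter and the implicit function theorem at each transversal crossing) the finitely many times at which $\phi(\cdot,y_p,p)$ crosses $\partial N$ vary continuously with $p$, and the number of crossings is locally constant. Since $\tau_N(p)$ is the sum of the lengths of the finitely many time-intervals during which the orbit lies in $N$ (a consequence of transversality: the orbit cannot be tangent to $\partial N$, so the entry/exit times are isolated and finite in number on $[0,\infty)$ because the orbit eventually converges to $X^s_p \notin N$), it is a continuous function of these crossing times, hence continuous at $p_1$. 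The real work is at $s = 1$: I would show $\liminf_{s\to 1}\tau_N(\gamma(s)) = \infty$. Pick any $T > 0$. Because $y_{p^*} \in W^s(X^*_{p^*})$ and $X^*_{p^*} \subset \operatorname{int} N$, there is a time $t^*$ with $\phi(t,y_{p^*},p^*) \in \operatorname{int} N$ for all $t \in [t^* , t^* + T]$; indeed for all $t \ge t^*$. By continuity of $\phi$ on the compact time-interval $[0, t^*+T]$ and continuity of $p \mapsto y_p$, for $s$ close enough to $1$ we get $\phi(t,y_{\gamma(s)},\gamma(s)) \in N$ for all $t \in [t^*, t^*+T]$, whence $\tau_N(\gamma(s)) \ge T$. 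Since $T$ was arbitrary, $\tau_N(\gamma(s)) \to \infty$.

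\textbf{Main obstacle.} The delicate point is controlling the orbit near the controlling critical element $X^*_{p^*}$ uniformly as the parameter varies, specifically ensuring that "$y_p$ enters $N$ and stays long" is detected on a \emph{finite} time window so that simple continuity of the flow applies — this is exactly why the argument above pushes a large fixed time $T$ and uses compactness of $[0,t^*+T]$ rather than trying to track the orbit for all time. A secondary subtlety is the local constancy of the crossing count and the continuity of crossing times near points of $\gamma([0,1))$: one must rule out orbits becoming tangent to $\partial N$ or developing new crossings under perturbation, which Assumption~\ref{as:neigh2}'s transversality hypothesis is designed to prevent, but making this rigorous requires a careful application of the implicit function theorem to the function $(t,p) \mapsto $ (signed distance of $\phi(t,y_p,p)$ to $\partial N$) at each transversal crossing, plus an argument (using $y_p \in W^s(X^s_p)$ and $X^s_p \notin N$, so the orbit is eventually bounded away from $N$) that bounds the number of crossings locally. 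I expect this crossing-time bookkeeping to be the most technical part of the proof, whereas the divergence at $\gamma(1)$ is comparatively soft.
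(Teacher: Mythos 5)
Your proposal is correct and takes essentially the same route as the paper, which splits the theorem into the same three pieces (well-definedness, divergence of $\tau_N$ at $\gamma(1)$ via continuity of the flow on a compact time window $[t^*,t^*+T]$, and continuity on $\gamma([0,1))$ via the implicit function theorem applied at the transversal crossings of $\partial N$ together with a compactness argument ruling out new crossings). The crossing-time bookkeeping you flag as the technical core is indeed where the paper spends most of its effort (its Lemma on continuity over $\gamma([0,1))$), and your sketch of it contains the right ingredients.
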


\subsection{Illustrative Example}

\begin{example}[Illustration of Main Theorems]

To illustrate the results of Theorems~\ref{thm:bound1},
\ref{thm:bound2}, \ref{thm:time} and Corollary~\ref{cor:euc} we consider the
simple example of a damped, driven nonlinear pendulum with constant
driving force.
The dynamics are given by,
\begin{align}
\dot{x}_1 &= x_2 \label{sys1a} \\
\dot{x}_2 &= -c_1\sin(x_1) - c_2x_2 + c_3, \label{sys1b}
\end{align}
where $c_1, c_2, c_3 > 0$ are real parameters and 
$x = (x_1, x_2) \subset \mathbb{R}^2$.
Physically, $x_1$ represents the angle of the pendulum,
$x_2$ its angular velocity, $c_1$ the square of the  natural
frequency of the pendulum (under the small angle approximation), $c_2$ a damping coefficient due to air drag, and $c_3$ the constant driving torque.
Eqs.~\ref{sys1a}-\ref{sys1b} 
can also be interpreted as an electrical generator with
$(x_1,x_2)$ the angle and angular velocity of the turbine,
$c_1$ a constant determining the electrical torque supplied by
the generator, $c_2$ a damping coefficient
due to friction, and $c_3$ the constant driving mechanical torque.
For the demonstration below, we set 
$c = (c_1,c_2,c_3) = (2,0.5,1.5)$ and we restrict $x_1$ to a single interval
of length~$2\pi$ since $x_1$ is defined modulo~$2\pi$.
Although $c_3$ is initially given the fixed value of~$1.5$, we let
$p \equiv c_3$ and will subsequently treat it as a free parameter,
setting $p_0 = 1.5$.
At $p_0$, this system possesses one stable equilibrium point $X^s_{p_0}$,
at $(0.848,0)$, one unstable equilibrium point $X^1_{p_0}$, at $(2.294,0)$, 
and no other nonwandering elements.
Variation of the value of $p$ over a range $J$ that contains $p_0$
then generates a $C^1$ continuous family
of vector fields, as well as families of equilibria $\{X^s_p\}_{p \in J}$
and $\{X^1_p\}_{p \in J}$.

We establish an initial condition to Eqs.~\ref{sys1a}-\ref{sys1b} as the output of the related system,
\begin{align}
\dot{z}_1 &= z_2 \label{sys2a} \\
\dot{z}_2 &= - c_2z_2 + c_3,  \label{sys2b}
\end{align}
starting from the stable equilibrium point $X_p^s$ and running for time $c_4 = 0.8$~sec, which is the length of time the disturbance is active. Let $\phi_d$ denote the flow of Eqs.~\ref{sys2a}-\ref{sys2b} and let $J = (1.3,2)$. Then the initial condition of Eqs.~\ref{sys1a}-\ref{sys1b} is given by $y_p : J \to \mathbb{R}^2$ with $y_p = \phi_d(c_4,X^s_p,p)$. If Eqs.~\ref{sys1a}-\ref{sys1b} are interpreted as an electrical generator, then Eqs.~\ref{sys2a}-\ref{sys2b} represent a short circuit on the terminals of the generator so that it can no longer supply any electrical torque. This is modeled by setting $c_1=0$ in Eqs.~\ref{sys1a}-\ref{sys1b}, which then gives Eqs.~\ref{sys2a}-\ref{sys2b}.

Fig.~\ref{fig:fixed_bound} shows $\partial W^s(X^s_{p_0})$.
Note that the intersection of $\partial W^s(X^s_{p_0})$ with the nonwandering
set is $X^1$, every orbit $\gamma \subset \partial W^s(X^s_{p_0})$ has
$\omega(\gamma) = X^1_{p_0}$,
$X^1_{p_0}$ is hyperbolic, and the transversality assumption is
vacuously true since $X^1_{p_0}$ is the only critical element in 
$\partial W^s(X^s_{p_0})$.
Therefore, the system satisfies Assumptions~\ref{as:wand1}-\ref{as:trans1},
so by Theorem~\ref{thm:bound1}
we must have $\partial W^s(X^s_{p_0}) = W^s(X^1_{p_0})$, as can be seen 
in Fig.~\ref{fig:fixed_bound}.

\begin{figure}
\centering
\includegraphics[width=0.45\textwidth]{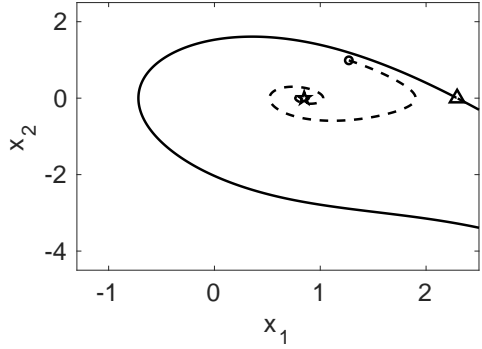}
\caption{The region of attraction boundary $\partial W^s(X^s_{p_0})$
  (solid black line) of the stable
  equilibrium point $X^s_{p_0}$ (black star) of Eqs.~\ref{sys1a}-\ref{sys1b} is
  shown.  It is equal to
  $W^s(X^1_{p_0})$ where $X^1_{p_0}$ (black triangle) is the unstable equilibrium
  point.  The orbit (dashed black line) from the initial condition
  $y_{p_0}$ (black circle) is shown.}
\label{fig:fixed_bound}
\end{figure}

\begin{figure}
\centering
\includegraphics[width=0.45\textwidth]{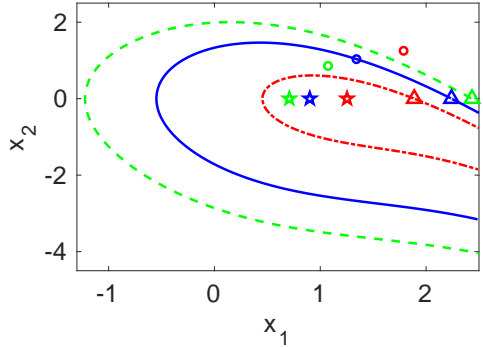}
\caption{The region of attraction boundaries $\partial W^s(X^s_p)$ of the
  stable equilibrium
  points $X^s_p$ (stars) for parameter values $p = 1.3$ (green dashed),
  $p = 1.568$ (solid blue), and $p = 1.9$ (red dot dashed) are shown.
  Each boundary is equal to $W^s(X^1_p)$ where $X^1_p$ (triangle) is the 
  unstable equilibrium point corresponding to parameter value $p$.
  The initial conditions (circles) are shown.}
\label{fig:fibers}
\end{figure}

Fig.~\ref{fig:fibers} 
shows the boundaries of the regions of attraction of the family of vector
fields for several values of the parameter $p \equiv c_3$.
At $p = 2$ the stable and unstable equilibria $X^s_p$ and $X^1_p$
collide in a saddle-node bifurcation and annihilate each other,
so we must restrict attention to sufficiently small $J = (1.3,2)$.
Fix $p_0 = 1.5$ as above.
Then the intersection of the nonwandering set with
$\partial W^s(X^s_J) \cap M_{p_0}$ is $X^1_{p_0}$,
for every orbit $\gamma \subset \partial W^s(X^s_J)$ we have
$\omega(\gamma) \subset X^1_J$, $X^1_{p_0}$ is hyperbolic,
and the transversality condition for $\partial W^s(X^s_J) \cap M_{p_0}$
is vacuously satisfied since the only critical element in
$\partial W^s(X^s_J) \cap M_{p_0}$ is $X^1_{p_0}$.
Therefore, the system satisfies Assumptions~\ref{as:wand2}-\ref{as:trans2},
so by Theorem~\ref{thm:bound2}
we must have 
$\partial W^s(X^s_J) = \sqcup_{p \in J} \partial W^s(X^s_p) = W^s(X^1_J)$,
and by Corollary~\ref{cor:euc} 
$\{\partial W^s(X^s_p)\}_{p \in J}$ is a Chabauty continuous family of subsets of $M$.

Choose two values of $p$, call them $p_1$ and $p_2$,
such that $y_{p_1} \in W^s(X^s_J)$ but $y_{p_2} \not\in W^s(X^s_J)$.
In particular, we may choose $p_1 = 1.3 ~(=p_0)$ and $p_2 = 1.9$. Then $y_{p_1} = (1.07,0.86) \in W^s(X^s_J)$ and $y_{p_2} = (1.79,1.25)\not\in W^s(X^s_J)$, as could be
verified, for example, by numerical integration.
Furthermore, since $\phi_d$ is $C^1$ then $y$ is also.

Hence, by Theorem~\ref{thm:multi} there must exist a boundary parameter value
$p^*$ such that $y_{p^*} \in \partial W^s(X^s_{p^*})$
and $d(p_0,p^*) = d_S(p_0,\partial R)$.
We will see that $p^* = 1.568$ is the desired boundary parameter value.
Since $\partial W^s(X^s_{p^*}) = W^s(X^1_{p^*})$,
this implies that $y_{p^*} \in W^s(X^1_{p^*})$, so $X^*_{p^*} = X^1_{p^*}$. 
Let $\gamma:[0,1] \to J$ by $\gamma(s) = (1-s)p_0 + sp^*$.
Then $\gamma$ is $C^1$,
$\gamma(0) = p_0$, $\gamma(1) = p^*$, and $\gamma(s) \notin \partial R$ for
$s \in [0,1)$ because $\gamma$ is a minimal geodesic
and $d(p_0,p^*) = d_S(p_0,C) = d_S(p_0,\partial R)$ by Theorem~\ref{thm:multi}.
As $\gamma([0,1))$ is connected, it does not intersect $\partial R$,
and $\gamma(0) \in R$, we must have $\gamma([0,1)) \subset R$.

Let $N$ be the closed ball centered at $X^*_{p_1} = X^1_{p_1}$ of radius 
$r = 1$ in $\mathbb{R}^2$.
Fig.~\ref{fig:ball_params} 
shows $X^*_p$ and the orbit of Eqs.~\ref{sys1a}-\ref{sys1b}
for a range of initial conditions $y_p$ for $p \in [p_1,p_2]$.
In particular, one can infer that each orbit has nonempty, transversal
intersection with $\partial N$ for $p \in [p_1,p_2]$.
Furthermore, $X^*_J \subset N$, and $y_{[p_1,p_2]}$ and $X^s_J$ are disjoint from $N$.
Therefore, the path $\gamma$ defined above satisfies Assumption~\ref{as:neigh2}
so by Theorem~\ref{thm:time}
we must have that the time $\tau_N$ spent by the orbit in the neighborhood $N$ is well-defined and continuous over $\gamma([0,1]) = [p_0,p^*]$.
Fig.~\ref{fig:time_params} 
illustrates the dependence of $\tau_N$ on $p \in \gamma([0,1]) = [p_0,p^*]$.
One observes that $\tau_N$ is continuous and 
that $\tau_N$ diverges to infinity as $p$ converges to a fixed value 
$p^*$.
For $p = p^*$ Fig.~\ref{fig:fibers} shows (solid blue) that
$y_{p^*} \in \partial W^s(X^s_J)$.
Furthermore, $p \in \gamma([0,1)) = [p_0,p^*)$ implies that $y_p \in W^s(X^s_J)$.
Although $\tau_N$ is monotonic in this example, this need not be true
in general.

\begin{figure}
\centering
\includegraphics[width=0.45\textwidth]{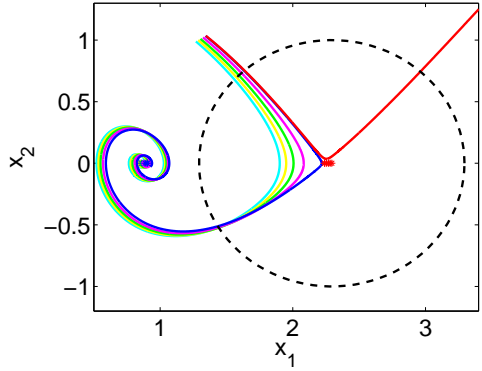}
\caption{The transversal intersection of several orbits with the ball 
$N$ containing the unstable equilibria (red stars).
Orbits are shown for parameter values
(driving torques) of 1.5 (cyan), 1.516 (yellow), 1.532 (green), 
1.55 (magenta), 1.568 (blue), and 1.57 (red). Only the initial
condition corresponding to the final parameter value of 1.57 lies outside
the region of attraction of the corresponding stable equilibrium.}
\label{fig:ball_params}
\end{figure}

\begin{figure}
\centering
\includegraphics[width=0.45\textwidth]{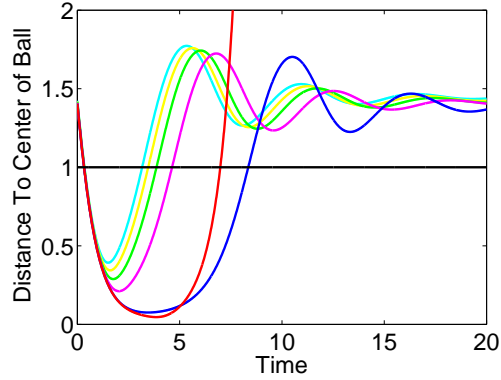}
\caption{Distance from the center of the ball $N$ as a function of
time for several orbits.  The line $r = 1$ marks the boundary of the ball
$\partial N$, so the time in the ball equals the difference in time between
the intersections of the orbit with this line.  The orbits shown correspond
to those in Fig.~\ref{fig:ball_params}.
As the parameter value approaches its boundary value from below, the
time in the neighborhood $N$ increases.  The final parameter value,
which is greater than the boundary parameter value, has an orbit 
(red) which spends less time in $N$ than that corresponding
to the boundary parameter value (blue).}
\label{fig:time_params}
\end{figure}


\end{example}

\section{Proof of Theorem~\ref{thm:bound1}}\label{sec:thm11}

This section is devoted to the proof of Theorem~\ref{thm:bound1}. Many of the results and proofs that underpin Theorem~\ref{thm:bound1} will be recycled for additional use for the parameter dependent vector field case in Section~\ref{sec:thm21}. Most of the lemmas presented here are similar to results given elsewhere, especially for diffeomorphisms of compact Riemannian
manifolds, but our presentation and proofs are novel unless
otherwise stated. In the following analysis, let $M$ be either a compact Riemannian manifold or Euclidean space unless stated otherwise.




Since $W^s(X^s)$ is invariant, its topological closure $\overline{W^s}(X^s)$ is invariant.
For, if $x \in \overline{W^s}(X^s)$ then there exists
a sequence $\{x_n\}_{n=1}^\infty \subset W^s(X^s)$ such that
$x_n \to x$.
By invariance of $W^s(X^s)$, $\phi_t(x_n) \in W^s(X^s)$ for all $n$ and $t \in \mathbb{R}$. By continuity of $\phi_t$, $\phi_t(x_n) \to \phi_t(x)$,
so $\phi_t(x) \in \overline{W^s}(X^s)$.
Hence, $\overline{W^s}(X^s)$ and $W^s(X^s)$ are invariant, so $\partial W^s(X^s) = \overline{W^s}(X^s) - W^s(X^s)$ is invariant.

Let $\{X^i\}_{i \in I}$ denote the critical elements in $\partial W^s(X^s)$.
Then $W^u_{\text{loc}}(X^i)$ and $W^s_{\text{loc}}(X^i)$
are well-defined local unstable and stable manifolds for $X^i$
for all $i \in I$.
Lemma~\ref{lem:tech1} provides a technical construction, for any
critical element, of a
compact set contained in its unstable manifold such that for any
sufficiently small neighborhood $N$ of this compact set in $M$, 
the following holds.
The union over time of
the time-$t$ flow $\phi_t$ of $N$ over all negative times $t$,
together with the stable manifold of the critical element, contains
an open neighborhood of the critical element in $M$.
This result will be instrumental in making the claim below that if
a critical element is contained in $\partial W^s(X^s)$ then its
unstable manifold intersects $\overline{W^s}(X^s)$.
Lemma~\ref{lem:tech1} is analogous to \cite[Corollary~1.2]{Pa69},
which states the corresponding result for diffeomorphisms without proof,
whereas here the result is shown for vector fields.
Fig.~\ref{fig:tech1} illustrates the content of Lemma~\ref{lem:tech1}.
Recall that if $D$ is a subset of a metric space and $\epsilon > 0$,
the notation $D_\epsilon$ refers to the subset of the metric space
such that for each $x \in D_\epsilon$ there exists $y \in D$ with
$d(x,y) < \epsilon$.

\begin{figure}
\centering
\includegraphics[width=0.45\textwidth]{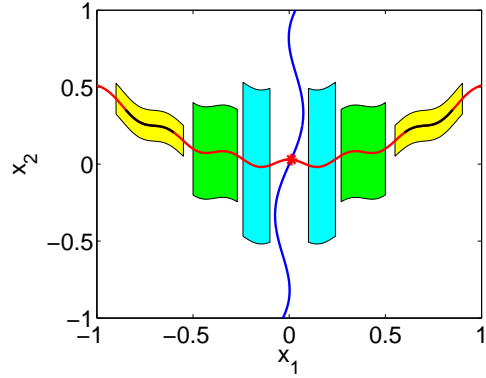}
\caption{The compact set $D$ (black line segments) and the neighborhood
$N$ (yellow shapes) mentioned in Lemma~\ref{lem:tech1} for an
equilibrium point (red star).
The set $D$ is contained in the unstable manifold (red line).
As the neighborhood $N$ is propagated backwards in time
(first to the green shapes then to the cyan), it approaches the
stable manifold (blue line) of the equilibrium point.
From the figure, it appears that the union of the backward flows of $N$
over all negative times, together with the stable manifold, will contain
a neighborhood of the equilibrium point, which is the content of
Lemma~\ref{lem:tech1}.
This figure originally appeared in \cite{Fi17}.}
\label{fig:tech1}
\end{figure}

\begin{lemma}\label{lem:tech1}
For any $i \in I$ and any $\epsilon > 0$ there exists a compact
set $D \subset W^u_{\text{loc}}(X^i) - X^i$ and an open neighborhood
$N$ of $D$ in $M$ disjoint from $X^i$ such that $N \subset D_\epsilon$ and
$\bigcup_{t \leq 0} \phi_t(N) \cup W^s(X^i)$
contains an open neighborhood of $X^i$ in $M$.
\end{lemma}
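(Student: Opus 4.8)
The plan is to work in linearizing (or adapted) coordinates near the critical element $X^i$, reduce the periodic-orbit case to a diffeomorphism statement via the Poincar\'e return map, and then construct $D$ as a fundamental domain for the unstable manifold. Concretely, fix $i \in I$. If $X^i$ is an equilibrium point, set $f = \phi_1$; if $X^i$ is a periodic orbit, set $f = \tau$, the $C^1$ first return map on a cross section $S$, and work with $W^u_{\text{loc}}(X^i) \cap S$ and $W^s_{\text{loc}}(X^i) \cap S$ in place of the full local manifolds, afterward re-inflating by the flow to return to $M$. In either case, since $X^i$ is hyperbolic, by the Hartman--Grobman theorem (or by an adapted Riemannian metric making $f$ a contraction on $W^s_{\text{loc}}$ and an expansion on $W^u_{\text{loc}}$) we may choose compact local stable and unstable manifolds $W^s_{\text{loc}}, W^u_{\text{loc}}$ with $f^{-1}(W^u_{\text{loc}}) \subset \operatorname{int} W^u_{\text{loc}}$ and a local product neighborhood $U \cong W^s_{\text{loc}} \times W^u_{\text{loc}}$ of $X^i$.

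First I would build the compact set $D$. Take the ``annulus'' $D = W^u_{\text{loc}}(X^i) - f^{-1}(\operatorname{int} W^u_{\text{loc}}(X^i))$ (a fundamental domain for the expanding action of $f$ on $W^u_{\text{loc}} - X^i$); it is compact, contained in $W^u_{\text{loc}}(X^i) - X^i$, and disjoint from $X^i$. The key elementary fact is that $\bigcup_{n \le 0} f^n(D) = W^u_{\text{loc}}(X^i) - X^i$, since every point of $W^u_{\text{loc}} - X^i$ leaves $\operatorname{int} W^u_{\text{loc}}$ after finitely many backward iterates of $f$ but, being in the local unstable manifold, was in $W^u_{\text{loc}}$ before that --- so it passes through $D$ exactly once. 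Next, given $\epsilon > 0$, choose an open neighborhood $N$ of $D$ in $M$ with $N \subset D_\epsilon$, $N$ disjoint from $X^i$, and $N$ small enough to lie inside the product chart $U$ and to be a product $N \cong N^s \times N^u$ where $N^s$ is a neighborhood of $0$ in $W^s_{\text{loc}}$ and $N^u$ is a neighborhood of $D$ in $W^u_{\text{loc}}$ (this uses the tubular-neighborhood / product structure machinery already cited in the preamble). Shrinking $\epsilon$ if needed keeps $N$ disjoint from $X^i$ automatically since $D$ is.

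The heart of the argument is then showing $\bigcup_{t \le 0}\phi_t(N) \cup W^s(X^i)$ contains a neighborhood of $X^i$. Pass to the discrete picture: it suffices to show $\bigcup_{n \le 0} f^n(N) \cup W^s_{\text{loc}}(X^i)$ contains a neighborhood of $X^i$ in $U$, and then re-inflate by the flow in the periodic case (a point near the orbit flows into the cross section in bounded time, so a flow-neighborhood of $X^i$ is recovered; one must note the finitely many ``gaps'' between integer times/returns are filled because $N$ is open and flowing an open set backward for a bounded interval still sweeps an open set --- I would phrase this as: $\bigcup_{t\le 0}\phi_t(N) \supset \bigcup_{n \le 0} f^n(\bigcup_{0\le s\le 1}\phi_{-s}(N))$ or the analogous Poincar\'e statement). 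For the discrete claim, take any $x \in U$ close to $X^i$ with $x \notin W^s_{\text{loc}}(X^i)$; write $x = (x^s, x^u)$ in product coordinates with $x^u \ne 0$. Under forward iteration $x^s$ contracts toward $0$ while $x^u = f^n$-expands; more useful is backward iteration isn't the point --- rather, I iterate forward: there is a smallest $n_0 \ge 0$ with $f^{n_0}(x)^u \notin \operatorname{int} W^u_{\text{loc}}$... hmm, that exits $U$. The cleaner route: the local unstable foliation. The set $\bigcup_{n\le 0} f^n(N)$ contains, for each point $d \in D$, the backward orbit of a whole transverse neighborhood of $d$; as $n \to -\infty$ these backward iterates $f^n(N)$ converge (uniformly on compacta, by the Inclination/$\lambda$-Lemma applied to $N$ as a disk transverse to $W^s_{\text{loc}}$) to a neighborhood of $X^i$ inside $W^s_{\text{loc}}(X^i)$, sweeping out a full one-sided neighborhood; together with $W^s_{\text{loc}}(X^i)$ itself (the ``limit'') this yields an open neighborhood of $X^i$. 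Making this last convergence-and-sweeping step rigorous --- precisely, showing the union of the backward iterates of the open product set $N$ together with $W^s_{\text{loc}}$ is actually \emph{open} and \emph{contains} $X^i$, rather than merely accumulating on it --- is the main obstacle, and I expect to handle it by the explicit product/linear model: in coordinates where $f(y,z) = (Ay, Bz)$ with $\|A\| < 1 < m(B)$, one checks directly that $\bigcup_{n \le 0} f^n(\{|y| < \delta, \eta < |z| < \eta'\}) = \{|y| < \delta,\ 0 < |z| < \eta'\}$ whose union with the stable manifold $\{z = 0\}$ is the open set $\{|y|<\delta, |z| < \eta'\}$, and then transport this back through the linearizing homeomorphism (upgraded to $C^1$ off $X^i$, or handled purely topologically since we only need openness of the final set, which invariance of domain from the preamble supplies).

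Thus the proof reduces to: (1) choose adapted coordinates / product structure near $X^i$; (2) define $D$ as the fundamental annulus in $W^u_{\text{loc}}$ and verify $\bigcup_{n\le0} f^n(D) = W^u_{\text{loc}} - X^i$; (3) choose $N \subset D_\epsilon$ open, disjoint from $X^i$, of product form inside the chart; (4) in the linear/product model compute that $\bigcup_{n \le 0} f^n(N) \cup W^s_{\text{loc}}(X^i)$ is an open neighborhood of $X^i$; (5) in the periodic case, convert from the return map back to the flow, filling the bounded-time gaps; (6) conclude. Step (4), and its honest transfer from the linear model to the $C^1$ setting, is where the real work lies.
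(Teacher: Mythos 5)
Your overall strategy is sound and your steps (1)--(3), (5) agree with what the paper does: the paper also sets $f=\phi_1$ or the first return map, takes $D$ to be the closure of $W^u_{\text{loc}}(X^i)-f^{-1}(W^u_{\text{loc}}(X^i))$ (your fundamental annulus), and chooses $N$ inside $D_\epsilon$. Where you genuinely diverge is the key openness step (your step (4)). The paper never linearizes: it builds, via the tubular neighborhood theorem, a $C^1$ continuous family of disks transverse to $W^u_{\text{loc}}(X^i)$ centered along $D$, extends it to all of $W^u_{\text{loc}}(X^i)$ by pulling back with $f^{-1}$ (appending $W^s_{\text{loc}}(X^i)$ as the disk over $X^i$), proves $C^1$ continuity of the extended family at $X^i$ using the Inclination Lemma, packages the family as an injective continuous map $F:B^s_r\times W^u_{\text{loc}}(X^i)\to M$, and invokes invariance of domain to get an open neighborhood every point of which either lies on $W^s_{\text{loc}}(X^i)$ or flows forward into one of the disks, hence into $N$. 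Your route instead verifies the containment in the linear model and transports it by a topological conjugacy. This is viable, and arguably shorter for this lemma alone, but note that the paper's construction is not gratuitous: the same disk family is perturbed uniformly in $C^1$ in the parameter-dependent version (Lemma~6.7 / Appendix~B), which a merely topological conjugacy could not support.

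Two cautions on your step (4). First, your parenthetical fallback --- a local product neighborhood $U\cong W^s_{\text{loc}}\times W^u_{\text{loc}}$ obtained from an adapted metric, on which $f$ acts as a product --- does not exist in the $C^1$ category for a general hyperbolic fixed point; an adapted norm controls rates but does not split $f$. The only rigorous version of your product picture is the topological one (Hartman--Grobman), and you must additionally arrange the conjugacy to carry $W^{s}_{\text{loc}}$ and $W^{u}_{\text{loc}}$ onto the linear subspaces (e.g.\ by first straightening them with a $C^1$ chart, as the paper does following Palis--de~Melo) so that ``union with the stable manifold'' in the linear model really corresponds to union with $W^s(X^i)$ downstairs. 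Second, your closing hedge that openness can instead be ``handled purely topologically\dots which invariance of domain supplies'' is not a substitute: invariance of domain needs an injective continuous parametrization of the candidate neighborhood, and producing one without the conjugacy is exactly the transverse-disk-family construction of the paper. Commit to one of the two routes; either works, but the half-measure between them does not. Also, your displayed identity in the linear model should be a containment ``$\supset$'' (with $\eta<\eta'/\|B\|$ so consecutive shells overlap), not an equality, though containment is all you need.
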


\begin{proof}[Proof Outline of Lemma~\ref{lem:tech1}]
If $X^i$ is an equilibrium point, let $f = \phi_1$ be the time-1 flow.
If $X^i$ is a periodic orbit, let $f = \tau$ be the first return map
of a cross section $S$ of $X^i$.
Let $D' = W^u_{\text{loc}}(X^i)$ and let $D$ be the topological closure of
$D' - f^{-1}(D')$ in $M$.
In order to show the existence of the desired open neighborhood of $X^i$,
the first step will be constructing a $C^1$ continuous disk family centered
along $D$ and contained in an open neighborhood $N \subset D_\epsilon$.
Then, this $C^1$ disk family is extended to a $C^1$ disk family
centered along
$W^u_{\text{loc}}(X^i)$ by backward iteration and the inclusion of the disk $W^s_{\text{loc}}(X^i)$. It is shown that this family is in fact $C^1$ continuous using the Inclination Lemma.
Finally, once the $C^1$ continuous disk family has been constructed, invariance of domain \cite[Theorem~2B.3]{Ha01} is applied
to conclude that the disk family contains an open neighborhood of $X^i$.
By construction, this implies that $\bigcup_{t \leq 0} \phi_t(N) \cup W^s(X^i)$ contains an open neighborhood of $X^i$. The full proof is provided in Appendix~\ref{ap:one}.
\end{proof}

We will use the technical result of Lemma~\ref{lem:tech1} to show
that the unstable manifold of a critical element in the boundary of the 
region of attraction must have nonempty intersection with
$\overline{W^s}(X^s)$.
The following lemma is analogous to the combination of
\cite[Theorem 3-3]{Ch88} (for equilibrium points in $\partial W^s(X^s)$) and
\cite[Corollary 3-4]{Ch88} (for periodic
orbits in $\partial W^s(X^s)$),
although \cite[Corollary 3-4]{Ch88} was unproven.
Our proof is similar to the proof of \cite[Theorem 3-3]{Ch88},
although we have explicitly proved Lemma~\ref{lem:tech1} whereas
\cite{Ch88} states a similar technical result without proof, and we also
explicitly prove \cite[Corollary 3-4]{Ch88}.

\begin{lemma}\label{lem:bound1}
If $X^i \subset \partial W^s(X^s)$
then $\{W^u(X^i) - X^i\} \cap \overline{W^s}(X^s) \neq \emptyset$.
\end{lemma}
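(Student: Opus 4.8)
The plan is to apply Lemma~\ref{lem:tech1} to the critical element $X^i$ and exploit the fact that $X^i \subset \partial W^s(X^s)$, so that every neighborhood of $X^i$ meets $W^s(X^s)$. Fix $\epsilon > 0$ small enough that the $\epsilon$-neighborhood of $W^u_{\text{loc}}(X^i)$ is disjoint from $X^s$ (possible since $X^i \neq X^s$, as $X^i \subset \partial W^s(X^s)$ while $X^s \in \operatorname{int} W^s(X^s)$). Lemma~\ref{lem:tech1} then produces a compact set $D \subset W^u_{\text{loc}}(X^i) - X^i$ and an open neighborhood $N$ of $D$ in $M$, disjoint from $X^i$, with $N \subset D_\epsilon$, such that $U := \bigcup_{t \le 0}\phi_t(N) \cup W^s(X^i)$ contains an open neighborhood $O$ of $X^i$ in $M$. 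Since $X^i \subset \partial W^s(X^s) = \overline{W^s}(X^s) - W^s(X^s)$, the open set $O$ intersects $W^s(X^s)$; pick $x \in O \cap W^s(X^s)$.

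Next I would locate this $x$ within the decomposition of $U$. Either $x \in W^s(X^i)$ or $x \in \phi_t(N)$ for some $t \le 0$. The first case is impossible: $W^s(X^i)$ and $W^s(X^s)$ are disjoint, since the forward orbit of a point in $W^s(X^i)$ converges to $X^i \subset \partial W^s(X^s)$, which lies outside the open invariant set $W^s(X^s)$, so such a point cannot lie in $W^s(X^s)$. Hence $x \in \phi_t(N)$ for some $t \le 0$, i.e.\ $\phi_{-t}(x) \in N$ with $-t \ge 0$. Since $W^s(X^s)$ is invariant, $\phi_{-t}(x) \in W^s(X^s) \cap N$. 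Thus $N$ meets $W^s(X^s)$, and since $N \subset D_\epsilon$ and $D \subset W^u_{\text{loc}}(X^i) - X^i$, every point of $N$ lies within distance $\epsilon$ of $W^u(X^i) - X^i$.

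Now I would run this argument for a sequence $\epsilon_n \to 0$. For each $n$ we obtain a point $z_n \in W^s(X^s)$ (namely $\phi_{-t}(x)$ above, for the $N = N_n$ produced at scale $\epsilon_n$) and a point $w_n \in W^u_{\text{loc}}(X^i) - X^i$ with $d(z_n, w_n) < \epsilon_n$. Here I would use that $W^u_{\text{loc}}(X^i)$ is compact (stated in the excerpt): passing to a subsequence, $w_n \to w \in W^u_{\text{loc}}(X^i)$. Since $z_n \to w$ as well and $z_n \in W^s(X^s)$, we get $w \in \overline{W^s}(X^s)$, so $w \in W^u(X^i) \cap \overline{W^s}(X^s)$. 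It remains to check $w \notin X^i$; this is the one point requiring a little care. If $w \in X^i$, then the $w_n$ would be in $W^u_{\text{loc}}(X^i) - X^i$ but converging into $X^i$, which is a priori possible, so a separate argument is needed — one can either arrange via Lemma~\ref{lem:tech1} that the set $D$ (and hence $N \subset D_\epsilon$, for $\epsilon$ small) stays at a definite positive distance from $X^i$ uniformly in $n$ (indeed $D$ is a fixed compact set disjoint from $X^i$ in that lemma, so $d(D, X^i) > 0$, and taking $\epsilon_n < \tfrac12 d(D,X^i)$ keeps $N_n$ away from $X^i$), forcing $w_n$ and hence $w$ to stay at distance $\ge \tfrac12 d(D,X^i)$ from $X^i$.

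\textbf{Main obstacle.} The delicate step is the last one: ensuring the limit point $w$ lies in $W^u(X^i) - X^i$ rather than collapsing onto $X^i$. This is handled by using that in Lemma~\ref{lem:tech1} the set $D$ is a \emph{fixed} compact subset of $W^u_{\text{loc}}(X^i) - X^i$ disjoint from $X^i$ (for an equilibrium, essentially a fundamental domain $\overline{W^u_{\text{loc}}(X^i) - f^{-1}(W^u_{\text{loc}}(X^i))}$), so $d(D, X^i) > 0$, and by choosing $\epsilon_n \to 0$ with $\epsilon_n < \tfrac12 d(D,X^i)$ the approximating points $z_n$ remain uniformly bounded away from $X^i$. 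A secondary subtlety worth spelling out is the disjointness $W^s(X^i) \cap W^s(X^s) = \emptyset$, which follows because $W^s(X^s)$ is open and invariant while the forward orbit of any point of $W^s(X^i)$ accumulates on $X^i \subset \partial W^s(X^s)$, a point not in the open set $W^s(X^s)$.
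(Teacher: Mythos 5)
Your proposal is correct and follows essentially the same route as the paper: invoke Lemma~\ref{lem:tech1}, use that every neighborhood of $X^i \subset \partial W^s(X^s)$ meets $W^s(X^s)$, rule out $W^s(X^i)$ by disjointness, pull back into $N$ by invariance of $W^s(X^s)$, and let $\epsilon \to 0$ using compactness of $D$. The only difference is presentational — you extract convergent subsequences explicitly where the paper phrases the same limit via the set distance $d_S(D,\overline{W^s}(X^s)) = 0$ — and your extra care that the limit point does not collapse onto $X^i$ is resolved exactly as in the paper, since the intersection point lies in the compact set $D$, which is disjoint from $X^i$.
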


\begin{proof}[Proof of Lemma~\ref{lem:bound1}]
Using Lemma~\ref{lem:tech1} we will produce a neighborhood of $X^i$
from its stable and unstable manifolds.
Since $X^i$ is in the topological boundary, this neighborhood must
intersect $W^s(X^s)$.
Then since stable manifolds cannot intersect, by invariance, and by
sending $\epsilon$ in the statement of Lemma~\ref{lem:tech1} to zero
we will obtain the result.

Let $\epsilon > 0$.
By Lemma~\ref{lem:tech1}, there exists a compact set 
$D \subset W^u_{\text{loc}}(X^i) - X^i$ and an open neighborhood
$N$ of $D$ in $M$ disjoint from $X^i$ such that $N \subset D_\epsilon$ and
$\bigcup_{t \leq 0} \phi_t(N) \cup W^s(X^i)$
contains a neighborhood of $X^i$ in $M$, call it $U_\epsilon$. Then $U_\epsilon$ is a neighborhood of $X^i \subset \partial W^s(X^s)$, so $U_\epsilon \cap W^s(X^s) \neq \emptyset$.
Since $W^s(X^i) \cap W^s(X^s) = \emptyset$, there must exist
some $T \leq 0$ such that $\phi_T(N) \cap W^s(X^s) \neq \emptyset$.
Since $W^s(X^s)$ is invariant, this implies that 
$N \cap W^s(X^s) \neq \emptyset$.
Since $N \subset D_\epsilon$, letting $d_S$ be the set distance
on the Riemannian manifold $M$, we have
\begin{align*}
\epsilon \geq d_S(D,W^s(X^s)) = d_S(D,\overline{W^s}(X^s))
\end{align*}
holds for all $\epsilon > 0$, so $d_S(D,\overline{W^s}(X^s)) = 0$.
Since $D$ is compact and $\overline{W^s}(X^s)$ is closed, this implies
that $D \cap \overline{W^s}(X^s) \neq \emptyset$.
Hence, since $D \subset W^u_{\text{loc}}(X^i) - X^i$, it must be that
$\{W^u(X^i) - X^i\} \cap \overline{W^s}(X^s) \neq \emptyset$.
\end{proof}

For $X$ a critical element,
let $n^t(X) = 0$ if $X$ is an equilibrium point and let
$n^t(X) = 1$ if $X$ is a periodic orbit.
Let $n^u(X) = \text{dim } W^u(X) - n^t(X)$
and let $n^s(X) = \text{dim }W^s(X) - n^t(X)$.
Lemma~\ref{lem:counting} was proven in \cite[Lemma~3.1]{Sm60}.
It is reproduced here for clarity of presentation.
A slightly different result, that was reported in \cite[Lemma 3-5]{Ch88}
and was fundamental in the proof of \cite[Theorem~4-2]{Ch88},
has been disproven \cite{Fi20}.

\begin{lemma}\label{lem:counting}
If $W^s(X^i) \cap W^u(X^j) \neq \emptyset$ then $n^u(X^i) \leq n^u(X^j) + n^t(X^j) - 1$, which is equivalent to $\text{dim }W^u(X^i) \leq \text{dim }W^u(X^j) + n^t(X^i) - 1$.
\end{lemma}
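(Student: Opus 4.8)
\emph{Plan.} The plan is to combine the transversality hypothesis (Assumption~\ref{as:trans1}) with an elementary dimension count, exploiting the fact that $W^s(X^i) \cap W^u(X^j)$ contains an entire connecting orbit, hence is at least one-dimensional near such a point. First I would fix $x \in W^s(X^i) \cap W^u(X^j)$ and consider its orbit $\gamma$. Since both $W^s(X^i)$ and $W^u(X^j)$ are invariant, $\gamma \subset W^s(X^i) \cap W^u(X^j)$; moreover the forward orbit of $x$ converges to $X^i$ and its backward orbit to $X^j$, so in the relevant case $X^i \neq X^j$ the point $x$ is not a singularity of $V$ and $\gamma$ is a one-dimensional immersed orbit. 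In particular, near $x$, $\dim\bigl(W^s(X^i)\cap W^u(X^j)\bigr) \geq 1$.

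Next I would make the dimension count precise. Writing $x \in \phi_{-T}\bigl(W^s_{\text{loc}}(X^i)\bigr)$ for a suitable $T \geq 0$ and $x \in \phi_{T'}\bigl(W^u_{\text{loc}}(X^j)\bigr)$ for a suitable $T' \geq 0$, near $x$ the manifold $W^s(X^i)$ agrees with the embedded submanifold $\phi_{-T}\bigl(W^s_{\text{loc}}(X^i)\bigr)$ and $W^u(X^j)$ with the embedded submanifold $\phi_{T'}\bigl(W^u_{\text{loc}}(X^j)\bigr)$ (both are diffeomorphic images of the compact local invariant manifolds). By Assumption~\ref{as:trans1} these are transverse at $x$, so their intersection is, near $x$, an embedded submanifold of dimension $\dim W^s(X^i) + \dim W^u(X^j) - \dim M$. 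Combining with the first paragraph gives $\dim W^s(X^i) + \dim W^u(X^j) - \dim M \geq 1$.

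The remaining ingredient is to express $\dim M$ through $X^i$. If $X^i$ is an equilibrium point, hyperbolicity splits the tangent space into stable and unstable subspaces, so $\dim W^s(X^i) + \dim W^u(X^i) = \dim M$; if $X^i$ is a periodic orbit, the stable and unstable manifolds share the one-dimensional flow direction, giving $\dim W^s(X^i) + \dim W^u(X^i) = \dim M + 1$. In both cases $\dim M = \dim W^s(X^i) + \dim W^u(X^i) - n^t(X^i)$. Substituting into the inequality above and cancelling $\dim W^s(X^i)$ yields $\dim W^u(X^j) - \dim W^u(X^i) + n^t(X^i) \geq 1$, i.e.\ $\dim W^u(X^i) \leq \dim W^u(X^j) + n^t(X^i) - 1$; rewriting via $n^u(X) = \dim W^u(X) - n^t(X)$ and cancelling $n^t(X^i)$ gives the equivalent form $n^u(X^i) \leq n^u(X^j) + n^t(X^j) - 1$.

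I expect the main obstacle to be the local-structure bookkeeping in the second step: $W^s(X^i)$ and $W^u(X^j)$ are only injectively immersed, not embedded, so one must justify carefully that near the point $x$ on the connecting orbit each coincides with a genuinely embedded submanifold (a flow image of the compact local invariant manifold), so that the ``transverse intersection has the expected dimension'' statement applies legitimately; once this is in place, the rest is purely arithmetic. A secondary point to dispatch is the degenerate case in which an intersection point lies on a critical element (in particular $X^i = X^j$): either one restricts to genuine connecting/homoclinic orbits, or one checks the inequality holds trivially (with equality) when $X^i$ is a periodic orbit, matching the statement as given in \cite{Sm60}.
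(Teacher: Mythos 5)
Your proposal is correct and follows essentially the same route as the paper: both arguments observe that the intersection contains a whole (nontrivial) connecting orbit, use transversality to get $\dim W^s(X^i) + \dim W^u(X^j) - \dim M \geq 1$, and then finish with the identity $\dim W^s(X^i) + \dim W^u(X^i) - n^t(X^i) = \dim M$. The only cosmetic difference is that the paper performs the count at the tangent-space level, splitting off the span of $\dot{\delta}(0)$ (which sidesteps the immersed-versus-embedded bookkeeping you flag), whereas you count the dimension of the local intersection submanifold; your closing remarks about the degenerate case $X^i = X^j$ reflect a genuine imprecision in the statement that the paper's proof also implicitly assumes away.
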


\begin{proof}[Proof of Lemma~\ref{lem:counting}]
Since $W^s(X^i)$ and $W^u(X^j)$ have a point of
transversal intersection and are invariant under the flow, they have
an orbit $\delta$ of transversal intersection.
Then 
$\dot{\delta}(0) \in T_{\delta(0)}W^s(X^i) \cap T_{\delta(0)}W^u(X^j)$. By transversality, $T_{\delta(0)}W^s(X^i) \oplus T_{\delta(0)}W^u(X^j) = T_{\delta(0)}M$. Let $L$ be the span of $\dot{\delta}(0)$ in $T_{\delta(0)}M$.
Then, since $L$ belongs to both of these tangent spaces, $\left(T_{\delta(0)}W^s(X^i)-L\right) \oplus \left(T_{\delta(0)}W^u(X^j)-L\right) = \left(T_{\delta(0)}M-L\right)$.
Thus, by dimensionality this implies that $(\text{dim }W^s(X^i)-1) + (\text{dim }W^u(X^j)-1) \geq n-1$. Hence,  $(n^s(X^i) + n^t(X^i)-1) + (n^u(X^j) + n^t(X^j)-1) \geq n-1$. Since $n^s(X^i) + n^t(X^i) + n^u(X^i) = n$, this implies that
$(n - n^u(X^i)-1) + (n^u(X^j) + n^t(X^j)-1) \geq n-1$
so $n^u(X^i) \leq n^u(X^j) + n^t(X^j) - 1$.
Hence,
$\text{dim }W^u(X^i) = n^u(X^i) + n^t(X^i) \leq \text{dim }W^u(X^j)
+ n^t(X^i) - 1$.
\end{proof}

As defined in Section~\ref{sec:defs}, a heteroclinic sequence is a sequence of hyperbolic critical elements such that the stable manifold of each critical element
intersects the unstable manifold of the next element of the sequence.
A heteroclinic cycle is a finite heteroclinic sequence where the first
and last critical elements are the same.
Lemmas~\ref{lem:trans}-\ref{lem:het} show that
Assumptions~\ref{as:wand1},\ref{as:hyp1},and \ref{as:trans1} 
imply that there are no heteroclinic cycles and, therefore, that all heteroclinic sequences are 
finite. These are analogous to several Lemmas in \cite{Pa69} for diffeomorphisms,
but are proved here for vector fields.
Lemma~\ref{lem:trans} shows that the intersection of stable and unstable 
manifolds of critical elements satisfies the transitive property.
It was shown in \cite[Corollary~1.3]{Pa69} for diffeomorphisms, and is
proven here for vector fields.

\begin{lemma}\label{lem:trans}
If $(W^s(X^i)-X^i) \cap (W^u(X^j)-X^j) \neq \emptyset$
and $(W^s(X^j)-X^j) \cap (W^u(X^k)-X^k) \neq \emptyset$
then $(W^s(X^i)-X^i) \cap (W^u(X^k)-X^k) \neq \emptyset$.
\end{lemma}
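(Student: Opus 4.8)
The plan is to use the Inclination Lemma (as stated in Section~\ref{sec:defs}) to push a disk inside $W^u(X^k)$ forward until it becomes $C^1$-close to $W^u_{\text{loc}}(X^j)$, and then to intersect that disk with $W^s(X^i)$, exploiting the transversality granted by Assumption~\ref{as:trans1}. This is the vector-field analogue of the $\lambda$-lemma argument of \cite[Corollary~1.3]{Pa69}.

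First I would fix witnesses $q \in (W^s(X^j) - X^j) \cap (W^u(X^k) - X^k)$ and $z \in (W^s(X^i) - X^i) \cap (W^u(X^j) - X^j)$. Since $z \in W^u(X^j) = \bigcup_{t \geq 0} \phi_t(W^u_{\text{loc}}(X^j))$, flowing $z$ backward for a suitable time and using invariance of $W^s(X^i)$, $X^i$, and $X^j$ produces a point $z'$ in the relative interior of $W^u_{\text{loc}}(X^j)$ with $z' \in (W^s(X^i) - X^i) \cap (W^u_{\text{loc}}(X^j) - X^j)$; since $X^i$ is closed and $z' \notin X^i$, a whole neighborhood of $z'$ avoids $X^i$. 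I would set $f = \phi_1$ when $X^j$ is an equilibrium point, and let $f$ be the first return map on a cross section $S$ of $X^j$ when $X^j$ is a periodic orbit (in the latter case choosing $q \in S$ and replacing the relevant manifolds by their intersections with $S$ throughout, following the conventions of Section~\ref{sec:defs}), and put $B = W^u_{\text{loc}}(X^j)$.

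Next I would let $D$ be a small closed-disk neighborhood of $q$ in $W^u(X^k)$ chosen disjoint from $X^k$. By Assumption~\ref{as:trans1}, $W^u(X^k)$ and $W^s(X^j)$ are transverse, so $D$ meets $W^s(X^j)$ transversally at $q$; and by Lemma~\ref{lem:counting} applied to $W^s(X^j) \cap W^u(X^k) \neq \emptyset$, $\dim W^u(X^j) \leq \dim W^u(X^k) + n^t(X^j) - 1 \leq \dim W^u(X^k)$, so $\dim D \geq \dim B$ (both sides dropping by one in the periodic case). The Inclination Lemma then yields, for every $\epsilon > 0$ and all sufficiently large $n$, a submanifold $D_n \subset f^n(D)$ containing $f^n(q)$ that is $\epsilon$-$C^1$-close to $B = W^u_{\text{loc}}(X^j)$; since $W^u_{\text{loc}}(X^j)$ is compact and contains $z'$ in its relative interior, for $n$ large $D_n$ contains points near $z'$ and is there a $C^1$-small graph over $W^u_{\text{loc}}(X^j)$. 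Because $W^s(X^i)$ is transverse to $W^u(X^j)$ at $z'$ (Assumption~\ref{as:trans1}), hence to the relatively open subset $W^u_{\text{loc}}(X^j)$, and transverse intersections with a fixed submanifold persist under $C^1$-small perturbations of a compact piece, for $n$ large $D_n$ meets $W^s(X^i)$ at a point $w$ close to $z'$. Then $w \in f^n(D) \subset W^u(X^k)$ by flow invariance, with $w \notin X^k$ because $f^n(D) \cap X^k = \emptyset$, while $w \in W^s(X^i)$ with $w \notin X^i$ because $w$ lies in the neighborhood of $z'$ that avoids $X^i$; thus $w \in (W^s(X^i) - X^i) \cap (W^u(X^k) - X^k)$, as required.

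I expect the main obstacle to be the careful handling of the ``submanifold of $f^n(D)$'' qualifier in the Inclination Lemma: one must ensure that the part of $D_n$ which becomes $C^1$-close to $W^u_{\text{loc}}(X^j)$ actually sits over the interior point $z'$ — rather than only near $f^n(q)$, which drifts toward $X^j$ — and then apply persistence of transversality to a compact neighborhood of $z'$ inside $W^u_{\text{loc}}(X^j)$. A secondary point is the periodic-orbit case, where one must check that transversality in $M$ descends to transversality within the cross section $S$ (which holds because the manifolds involved are flow-invariant and $S$ is transverse to the flow) and that the first return map is defined along the forward orbit in question.
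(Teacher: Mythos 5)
Your proposal is correct and follows essentially the same route as the paper's proof: flow the data into the local picture around $X^j$, take a disk $D \subset W^u(X^k)$ transverse to $W^s(X^j)$ at a heteroclinic point, use Lemma~\ref{lem:counting} to match dimensions with $B = W^u_{\text{loc}}(X^j)$, apply the Inclination Lemma to make $f^n(D)$ $C^1$-close to $B$, and conclude by persistence of the transversal intersection of $W^s(X^i)$ with $B$. The only (minor) difference is that you localize the transversality-persistence step at a point $z'$ and explicitly track that the resulting intersection point avoids $X^i$ and $X^k$, whereas the paper works with the compact piece $\phi_T W^s_{\text{loc}}(X^i)$ and cites \cite[Corollary A.3.18]{Ka99}; these are interchangeable.
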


\begin{proof}[Proof of Lemma~\ref{lem:trans}]
The proof revolves around the openness of transversal intersection
of compact submanifolds which are $C^1$ close,
and the use of the Inclination Lemma to guarantee that the submanifolds
are $C^1$ close.

If $X^j$ is an equilibrium point, let $B = W^u_{\text{loc}}(X^j)$.
If $X^j$ is a periodic orbit, let $B = W^u_{\text{loc}}(X^j) \cap S$,
where $S$ is any cross section of $X^j$. By invariance of $W^s(X^i)$ and the assumptions of the Lemma, we have that $W^s(X^i) \cap B \neq \emptyset$. We claim that $B$ is transverse to $W^s(X^i)$. By Assumption~\ref{as:trans1}, $W^u_{\text{loc}}(X^j)$ is transverse to $W^s(X^i)$.
Hence, if $X^j$ is an equilibrium point then this implies that $B$ is transverse to $W^s(X^i)$.
Now suppose $X^j$ is a periodic orbit. For any $x \in W^s(X^i) \cap W^u(X^j)$,
$T_xW^s(X^i)$ and $T_xW^u(X^j)$ together span $T_x M$ since the intersection
is transverse. Then $B$ is obtained by intersecting $W^u_{\text{loc}}(X^j)$ with $S$,
so $T_xW^u(X^j)$ is equal to the span of $T_xB$ and the flow direction $V(x)$.
However, as $W^s(X^j)$ is invariant under $V$, $V(x) \in T_xW^s(X^j)$.
Therefore, $T_xW^s(X^i)$ and $T_xB$ together have the same span as
$T_xW^s(X^i)$ and $T_xW^u(X^j)$, which implies that $W^s(X^i)$ and $B$ are
transverse at $x$.  As $x$ was arbitrary, the claim follows.

By the definition of $W^s(X^i)$, there exists $T < 0$ such that 
$\phi_TW^s_{\text{loc}}(X^i) \cap W^u_{\text{loc}}(X^j) \neq \emptyset$.
Note that $B$ is a compact embedded submanifold, and that
it is transverse to $\phi_TW^s_{\text{loc}}(X^i)$ since it is transverse
to $W^s(X^i)$.
Since $\phi_TW^s_{\text{loc}}(X^i)$ and $B$ are compact submanifolds
with transversal intersection, by \cite[Corollary A.3.18]{Ka99}
there exists $\epsilon > 0$ such that if $D$ is a compact submanifold
which is $\epsilon$ $C^1$-close to $B$ then it has
nonempty, transversal intersection with $\phi_TW^s_{\text{loc}}(X^i)$,
and hence with $W^s(X^i)$.

Let $y \in (W^s(X^j)-X^j) \cap (W^u(X^k)-X^k)$.
Since by Assumption~\ref{as:trans1} the intersection is transverse, if $X^j$ is an equilibrium point
there exists a compact submanifold $D \subset W^u(X^k)$, which we choose to be a $C^1$ disk
centered at $y$ for the purpose of applying the Inclination Lemma, such that
$D$ is transverse to $W^s(X^j)$.
Similarly, if $X^j$ is a periodic orbit, then transversality of $W^u(X^k)$ and $W^s(X^j)$ in $M$ implies that $W^u(X^k) \cap S$ and $W^s(X^j) \cap S$ are transverse in $S$, so there exists
a $C^1$ disk $D \subset W^u(X^k) \cap S$ centered at $y$ such that $D$
is transverse to $W^s(X^j) \cap S$ in $S$.
By Lemma~\ref{lem:counting}, $\text{dim }W^u(X^k) \geq \text{dim }W^u(X^j)$,
so we may choose $D$ such that $\text{dim }D = \text{dim }B$.
Let $f = \phi_1$ if $X^j$ is an equilibrium point, and let
$f$ be a $C^1$ first return map on $S$ if $X^j$ is a periodic orbit.
Then, by the Inclination Lemma for equilibria or periodic orbits,
there exists $n_0 > 0$ such that 
$n \geq n_0$ implies $f^n(D)$ is $\epsilon$ $C^1$-close to $B$. By the choice of $\epsilon$, and the argument of the previous paragraph, $f^n(D) \cap W^s(X^i) \neq \emptyset$.
Since $D \subset W^u(X^k)$ invariant, this implies that
$W^s(X^i) \cap W^u(X^k) \neq \emptyset$.
\end{proof}

Lemma~\ref{lem:homo} shows that there are no homoclinic orbits
in $\partial W^s(X^s)$.
A similar claim was shown for diffeomorphisms in \cite[Corollary~1.4]{Pa69},
but the result here is proven for vector fields.

\begin{lemma}\label{lem:homo}
For any $X^i$, $W^s(X^i) \cap W^u(X^i) = X^i$.
\end{lemma}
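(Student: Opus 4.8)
The plan is to argue by contradiction. Since $X^i$ is invariant, $X^i \subseteq W^s(X^i)\cap W^u(X^i)$ always holds, so it suffices to rule out a homoclinic orbit. I would therefore suppose there is $x \in (W^s(X^i)-X^i)\cap(W^u(X^i)-X^i)$ with orbit $\gamma$. As $x\notin X^i$, $x$ is not an equilibrium, so $\gamma$ is nonstationary, and by Assumption~\ref{as:trans1} applied to the pair $X^i,X^i$ the manifolds $W^s(X^i)$ and $W^u(X^i)$ meet transversally along $\gamma$. I would then split into the cases $X^i$ an equilibrium and $X^i$ a periodic orbit. If $X^i$ is an equilibrium, then $\gamma$ is a nonstationary orbit of transversal intersection, which is exactly the situation on which the proof of Lemma~\ref{lem:counting} rests, so applying that lemma with $X^j=X^i$ gives $n^u(X^i)\leq n^u(X^i)+n^t(X^i)-1$; since $n^t(X^i)=0$ this reads $0\leq-1$, a contradiction. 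Hence no homoclinic orbit exists when $X^i$ is an equilibrium.

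When $X^i$ is a periodic orbit, Lemma~\ref{lem:counting} only gives $n^u(X^i)\leq n^u(X^i)$, so I would instead produce a horseshoe. Choose a cross section $S$ of $X^i$ with $C^1$ first return map $f$, let $p=X^i\cap S$ be the corresponding hyperbolic fixed point, and let $q$ be any one of the points at which $\gamma$ crosses $S$; then $q\in W^s(p)\cap W^u(p)$ and $q\neq p$. Transferring transversality from $M$ to $S$ exactly as in the proof of Lemma~\ref{lem:trans} (using $V(q)\in T_qW^s(X^i)\cap T_qW^u(X^i)$ and that $S$ is transverse to the flow) shows that $W^s(p)$ and $W^u(p)$ meet transversally at $q$, i.e.\ $q$ is a transverse homoclinic point of $f$. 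By the Smale--Birkhoff homoclinic theorem --- obtainable in the $C^1$ category by iterating the Inclination Lemma in the spirit of \cite{Pa69} --- there is an arbitrarily small box $B$ around $p$ in $S$ and an iterate $f^N$ possessing a compact invariant set $\Lambda\subseteq B$ on which it is conjugate to a subshift on finitely many symbols; $\Lambda$ then contains infinitely many periodic points of $f^N$, yielding infinitely many pairwise distinct periodic orbits of $V$, each of which passes through $B$.

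This would contradict Assumption~\ref{as:wand1}. Let $N$ be the open neighborhood of $\partial W^s(X^s)$ provided by that assumption. Since $p\in X^i\subseteq\partial W^s(X^s)$, $N$ contains $p$, so I would choose the box $B$ above small enough that $B\subseteq N$, obtaining infinitely many pairwise distinct periodic orbits $Y_1,Y_2,\dots$ of $V$ with $Y_m\cap B\neq\emptyset$, hence $Y_m\cap N\neq\emptyset$. For each $m$, $Y_m\cap N\subseteq\Omega(V)\cap N=\bigcup_{i\in I}X^i$, so $Y_m$ meets some critical element $X^i$; since distinct critical elements are disjoint, this forces $Y_m=X^i$. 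Thus infinitely many distinct periodic orbits would coincide with members of the finite family $\{X^i\}_{i\in I}$ --- impossible. Therefore $W^s(X^i)\cap W^u(X^i)=X^i$.

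I expect the periodic-orbit case to be the main obstacle: invoking (or, to stay consistent with the paper's toolkit, deriving via repeated use of the Inclination Lemma) the Birkhoff--Smale mechanism in the $C^1$ category, and making precise that the periodic orbits it produces can be localized so as to meet the given neighborhood $N$ of $\partial W^s(X^s)$ --- which hinges on the fact that every such periodic orbit passes arbitrarily close to $X^i$ itself, and $X^i\subseteq\partial W^s(X^s)$. By comparison the equilibrium case is an immediate dimension count from Lemma~\ref{lem:counting}.
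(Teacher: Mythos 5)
Your proof is correct, and your equilibrium case coincides exactly with the paper's: the same dimension count from Lemma~\ref{lem:counting} applied to the pair $(X^i,X^i)$. For the periodic-orbit case, however, you take a genuinely different route. The paper's argument is more economical: it shows directly that the homoclinic point $q \in (W^s(X^i)-X^i)\cap(W^u_{\text{loc}}(X^i)\cap S)$ is itself nonwandering, by placing a small disk $D$ centered at $q$ inside an arbitrary neighborhood $U$ and using the Inclination Lemma to conclude that $f^n(D)$ returns $\epsilon$ $C^1$-close to $B\ni q$ and hence meets $U$; a forward iterate of $q$ then lies in the neighborhood $N$ of Assumption~\ref{as:wand1}, is nonwandering by invariance of $\Omega(V)$, yet lies on no critical element since $\omega(q)=\alpha(q)=X^i$ and $q\notin X^i$ --- contradicting $\Omega(V)\cap N=\bigcup_{j}X^j$. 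You instead invoke the Smale--Birkhoff mechanism to manufacture a horseshoe and thereby infinitely many distinct periodic orbits through a small box around $p$ contained in $N$, contradicting the finiteness of the critical elements there. Both arguments collide with Assumption~\ref{as:wand1} and both rest on the Inclination Lemma, but yours carries two extra obligations the paper's avoids: that the $C^1$ horseshoe can be localized in an arbitrarily small box around $p$ alone (true, but it requires the version of the theorem in which the invariant set is confined to a rectangle near $p$ rather than near the whole homoclinic orbit), and that infinitely many periodic points of the return map yield infinitely many distinct periodic orbits of the flow (true, since a flow-periodic orbit meets the compact cross section transversally in only finitely many points, but worth stating). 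What your approach buys is strictly more information --- genuine chaotic dynamics near the homoclinic orbit --- at the cost of a heavier theorem; the paper's single-nonwandering-point argument is the minimal step needed for the contradiction.
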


\begin{proof}[Proof of Lemma~\ref{lem:homo}]
Using transversality and the Inclination Lemma we show that \linebreak
$W^s(X^i) \cap W^u(X^i)$ is nonwandering.
By Assumption~\ref{as:wand1}, this will imply that 
$W^s(X^i) \cap W^u(X^i) = X^i$.

Clearly $X^i \subset W^s(X^i) \cap W^u(X^i)$.
Assume towards a contradiction that
$\left(W^s(X^i)-X^i\right) \cap \linebreak \left(W^u(X^i)-X^i\right) \neq \emptyset$.
If $X^i$ is an equilibrium point, then by Lemma~\ref{lem:counting} this
implies that
$\text{dim } W^u(X^i) \leq \text{dim }W^u(X^i) - 1 < \text{dim }W^u(X^i)$,
which is a contradiction.
So, suppose $X^i$ is a periodic orbit, let $S$ be a $C^1$ cross section
of $X^i$, and let $B = W^u_{\text{loc}}(X^i) \cap S$.
By the assumption at the start of this paragraph and an invariance argument analogous to that in the proof of Lemma~\ref{lem:trans}, $B$ and $W^s(X^i)$ are transverse and $\left(W^s(X^i)-X^i\right) \cap \left(B-X^i\right) \neq \emptyset$. So, let $q \in \left(W^s(X^i)-X^i\right) \cap \left(B-X^i\right)$.
We claim that $q$ is nonwandering.
Let $U$ be any neighborhood of $q$ in $M$, and let $\epsilon > 0$ such that the ball of radius $\epsilon$ centered at $q$ is contained in $U$.
As $B$ is transverse to $W^s(X^i)$,
let $D \subset B$ be a $C^1$ disk centered at $q$ of the same dimension as
$B$ such that $D \subset U$ and $D$ is transverse to $W^s(X^i)$.
Note that $D$ is transverse to $W^s(X^i) \cap S$ in $S$ as well.
Let $f$ be a $C^1$ first return map on $S$.
Then, by the Inclination Lemma there exists $n_0 > 0$ such that
$n \geq n_0$ implies that $f^n(D)$ is $\epsilon$ $C^1$-close to $B$.
As $q \in B$ and $U$ contains the ball of radius $\epsilon$ centered
at $q$, this implies that $f^n(D) \cap U \neq \emptyset$. Hence, as $D \subset U$, we have that $f^n(U) \cap U \neq \emptyset$ for $n \geq n_0$, so $q$ is nonwandering.

By Assumption~\ref{as:wand1},
there exists a neighborhood $N$ of $\partial W^s(X^s)$ such that
$\Omega(V) \cap N = \bigcup_{j \in I} X^j$.
As $\omega(q) = \alpha(q) = X^i$ and $q \not\in X^i$,
$q \neq X^j$ for any $j \in I$.
But, since $X^i \subset N$ open and $\omega(q) = X^i$, there exists
$T > 0$ such that $\phi_T(q) \in N$.
As the nonwandering set is invariant, $\phi_t(q)$ is nonwandering in $N$.
As $X^j$ is invariant for each $j \in I$,
$\phi_t(q) \not\in \bigcup_{j \in I} X^j$, which is a contradiction
to the choice of $N$.
\end{proof}

Lemma~\ref{lem:het} now shows that every heteroclinic sequence has
finite length.

\begin{lemma}\label{lem:het}
There do not exist any heteroclinic cycles.
Hence, every heteroclinic sequence has finite length.
\end{lemma}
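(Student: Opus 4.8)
The plan is to obtain both claims directly from the transitivity of heteroclinic connections established in Lemma~\ref{lem:trans} and the absence of homoclinic orbits established in Lemma~\ref{lem:homo}; throughout, the critical elements under consideration are understood to be those contained in $\partial W^s(X^s)$, so that Lemmas~\ref{lem:trans} and~\ref{lem:homo} apply. First I would rule out heteroclinic cycles. Suppose, for contradiction, that $X^1,\dots,X^n$ is a heteroclinic cycle, so $X^n = X^1$ and $(W^s(X^i)-X^i)\cap(W^u(X^{i+1})-X^{i+1})\neq\emptyset$ for each $i\in\{1,\dots,n-1\}$. I would then prove by induction on $m$ that $(W^s(X^1)-X^1)\cap(W^u(X^m)-X^m)\neq\emptyset$ for every $m\in\{2,\dots,n\}$: the base case $m=2$ is part of the hypothesis, and if the statement holds for some $m$ with $2\le m\le n-1$, then combining $(W^s(X^1)-X^1)\cap(W^u(X^m)-X^m)\neq\emptyset$ with $(W^s(X^m)-X^m)\cap(W^u(X^{m+1})-X^{m+1})\neq\emptyset$ and applying Lemma~\ref{lem:trans} yields the statement for $m+1$. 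Taking $m=n$ and using $X^n=X^1$ gives $(W^s(X^1)-X^1)\cap(W^u(X^1)-X^1)\neq\emptyset$, contradicting Lemma~\ref{lem:homo}.

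For the second assertion I would invoke the finiteness of the critical elements in $\partial W^s(X^s)$ provided by Assumption~\ref{as:wand1}, namely that they form the finite collection $\{X^i\}_{i\in I}$. If some heteroclinic sequence $X^1,X^2,\dots$ were infinite, then, since it takes values in the finite set $\{X^i\}_{i\in I}$, the pigeonhole principle would give indices $a<b$ with $X^a=X^b$. The finite sub-sequence $X^a,X^{a+1},\dots,X^b$ then inherits the required nonempty transversal intersections $(W^s(X^i)-X^i)\cap(W^u(X^{i+1})-X^{i+1})\neq\emptyset$ for $i\in\{a,\dots,b-1\}$ and has equal first and last terms, so (after relabeling) it is a heteroclinic cycle, contradicting the first part. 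Hence every heteroclinic sequence has finite length.

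I do not anticipate a genuine obstacle, since the two substantive ingredients---Lemma~\ref{lem:trans} and Lemma~\ref{lem:homo}---are already in hand; the only points requiring care are keeping the index bookkeeping in the induction correct and making explicit at the outset that every critical element occurring in a heteroclinic sequence or cycle lies in $\partial W^s(X^s)$, so that Lemmas~\ref{lem:trans} and~\ref{lem:homo}, together with the finiteness in Assumption~\ref{as:wand1}, are all applicable.
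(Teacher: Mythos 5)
Your proposal is correct and follows essentially the same route as the paper: transitivity (Lemma~\ref{lem:trans}) applied along the cycle to produce a homoclinic connection contradicting Lemma~\ref{lem:homo}, and then finiteness of the critical elements in $\partial W^s(X^s)$ from Assumption~\ref{as:wand1} plus the absence of cycles to bound the length of any heteroclinic sequence. You merely spell out the induction and the pigeonhole step that the paper leaves implicit.
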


\begin{proof}[Proof of Lemma~\ref{lem:het}]
Assume towards a contradiction that $\{X^j\}_{j=1}^m$ is a heteroclinic
cycle.
By transitivity (Lemma~\ref{lem:trans}), since $X^m = X^1$, this implies that
$(W^s(X^1)-X^1) \cap (W^u(X^1)-X^1) \neq \emptyset$.
This contradicts Lemma~\ref{lem:homo}.

Since $\Omega(V) \cap \partial W^s(X^s)$ consists of a finite number
of critical elements, and since there are no heteroclinic cycles,
every heteroclinic sequence must be finite.
\end{proof}

Lemma~\ref{lem:open} will be used to complete the proof of 
Lemma~\ref{lem:int1}.
It is analogous to \cite[Lemma 7.1.b.]{Sm67}, but for vector fields
instead of diffeomorphisms.

\begin{lemma}\label{lem:open}
Suppose that $W^u(X^i) \cap W^s(X^s) \neq \emptyset$
and $W^s(X^i) \cap W^u(X^j) \neq \emptyset$.
Then $W^u(X^j) \cap W^s(X^s) \neq \emptyset$.
\end{lemma}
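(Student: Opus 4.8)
The plan is to follow the template of Lemma~\ref{lem:trans}: push a $C^1$ disk lying in $W^u(X^j)$ forward under an appropriate return map until the Inclination Lemma makes it $C^1$-close to the local unstable manifold of $X^i$, and then exploit the fact that $W^s(X^s)$ is an \emph{open}, invariant set that already meets $W^u(X^i)$. Because the target set $W^s(X^s)$ here is open---rather than a lower-dimensional stable manifold as in Lemma~\ref{lem:trans}---I will not need openness of transversal intersection of compact submanifolds; it will suffice to make the pushed-forward disk pass within a small distance of one particular point of $W^u_{\text{loc}}(X^i)$ that already lies in $W^s(X^s)$.

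First I would dispose of the degenerate case $X^i = X^j$, in which $W^u(X^j) = W^u(X^i)$ and the conclusion is immediate from the first hypothesis; so assume $X^i \neq X^j$, which forces any point of $W^s(X^i) \cap W^u(X^j)$ to avoid both $X^i$ and $X^j$. Set $B = W^u_{\text{loc}}(X^i)$ and $f = \phi_1$ if $X^i$ is an equilibrium point, and set $B = W^u_{\text{loc}}(X^i) \cap S$ with $f$ a $C^1$ first return map on a cross section $S$ of $X^i$ if $X^i$ is a periodic orbit. Using that $W^s(X^s)$ is open and invariant (since $X^s$ is a stable equilibrium), pick $p \in W^u(X^i) \cap W^s(X^s)$ and flow it backwards---to a crossing of $S$ near $X^i$ in the periodic case---to obtain $p' \in B \cap W^s(X^s)$; then choose $\epsilon > 0$ with the ball of radius $\epsilon$ about $p'$ contained in $W^s(X^s)$. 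Next, by Assumption~\ref{as:trans1} the intersection $W^s(X^i) \cap W^u(X^j)$ is transverse; pick $q$ in it and, exactly as in the proof of Lemma~\ref{lem:trans} and using the dimension bound of Lemma~\ref{lem:counting}, build a $C^1$ disk $D \subset W^u(X^j)$ through $q$ with $\dim D = \dim B$ that is transverse to $W^s(X^i)$ (transverse to $W^s(X^i) \cap S$ in $S$, in the periodic case, after the usual invariance argument). By the Inclination Lemma there is $n_0 > 0$ such that $f^n(D)$ is $\epsilon$ $C^1$-close to $B$ for all $n \geq n_0$; since $p' \in B$, this yields a point of $f^n(D)$ within $\epsilon$ of $p'$, hence a point of $f^n(D)$ in $W^s(X^s)$. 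Finally, $D \subset W^u(X^j)$ and $W^u(X^j)$ is invariant, so $f^n(D) \subset W^u(X^j)$, and therefore $W^u(X^j) \cap W^s(X^s) \neq \emptyset$.

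The step I expect to be the crux is extracting from the ``$\epsilon$ $C^1$-close'' conclusion of the Inclination Lemma an honest point of $f^n(D)$ inside the prescribed open set $W^s(X^s)$. This works because, after the usual shrinking convention, $f^n(D)$ is the image of a $C^1$-small perturbation of a parametrization of all of $B$, so every point of $B$---in particular the distinguished point $p' \in B \cap W^s(X^s)$---is $\epsilon$-approximated by a point of $f^n(D)$; having shrunk $\epsilon$ at the outset so that the ball of radius $\epsilon$ about $p'$ sits inside $W^s(X^s)$ then closes the argument. The remaining care is the periodic-orbit bookkeeping: as in Lemma~\ref{lem:trans}, I would replace $p$ and $q$ by points on the cross section $S$ and use invariance of $W^s(X^s)$, $W^s(X^i)$, and $W^u(X^j)$---so that the flow direction lies in each of those tangent spaces---to transfer transversality between $M$ and $S$ and to guarantee that $W^u(X^j)$ meets $S$ transversally at $q$.
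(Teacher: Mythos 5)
Your proof is correct and follows essentially the same route as the paper's: a disk in $W^u(X^j)$ transverse to $W^s(X^i)$, with $\dim D = \dim B$ via Lemma~\ref{lem:counting}, is pushed forward by the Inclination Lemma until it is $\epsilon$ $C^1$-close to $B \subset W^u_{\text{loc}}(X^i)$, which meets the invariant set $W^s(X^s)$. The only difference is in the final step and is minor: you use openness of $W^s(X^s)$ and an $\epsilon$-ball about a point of $B \cap W^s(X^s)$ to convert $C^1$-closeness into an actual intersection, whereas the paper formally invokes persistence of transversal intersections with the compact submanifold $\phi_T W^s_{\text{loc}}(X^s)$; both are valid, and your version matches the intuition the paper itself announces at the start of its proof.
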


\begin{proof}[Proof of Lemma~\ref{lem:open}]
The proof uses the fact that $W^s(X^s)$ is open and, by invariance, 
intersects $W^u_{\text{loc}}(X^i)$, so any submanifold $K$ which is $C^1$ close
to $W^u_{\text{loc}}(X^i)$ also intersects $W^s(X^s)$.
The Inclination Lemma then guarantees that a disk in $W^u(X^j)$
is $C^1$ close to $W^u_{\text{loc}}(X^i)$.

Since $W^s(X^s)$ is invariant and intersects $W^u(X^i)$, 
$W^s(X^s) \cap W^u_{\text{loc}}(X^i) \neq \emptyset$.
So, let $q \in W^s(X^s) \cap W^u_{\text{loc}}(X^i)$.
By the definition of $W^s(X^s)$, there exists $T < 0$ such that
$q \in W^u_{\text{loc}}(X^i) \cap \phi_TW^s_{\text{loc}}(X^s)$.
If $X^i$ is an equilibrium point, let $B = W^u_{\text{loc}}(X^i)$.
If $X^i$ is a periodic orbit, let $S$ be a cross section containing
$q$ and let $B = W^u_{\text{loc}}(X^i) \cap S$.
Then it can be shown that $B$ is transverse to $\phi_TW^s_{\text{loc}}(X^s)$
(in $S$ if $X^i$ is a periodic orbit) by an argument analogous to that in the
proof of Lemma~\ref{lem:trans}.
Since $\phi_TW^s_{\text{loc}}(X^s)$ and $B$ are compact submanifolds
with transversal intersection,
by \cite[Proposition A.3.16,Corollary A.3.18]{Ka99}
there exists $\epsilon > 0$ such that if $D'$ is a compact submanifold
which is $\epsilon$ $C^1$-close to $B$ then it has a point of
transversal intersection with $\phi_TW^s_{\text{loc}}(X^s)$,
hence with $W^s(X^s)$.

Let $x \in W^s(X^i) \cap W^u(X^j)$.
Since the intersection is transversal by Assumption~\ref{as:trans1},
if $X^i$ is an equilibrium point 
there exists a $C^1$ disk $D \subset W^u(X^j)$ centered at $x$ with
$D$ transverse to $W^s(X^i)$.
Similarly, if $X^i$ is a periodic orbit there exists a disk
$D \subset W^u(X^j) \cap S$ centered at $x$ with $D$
transverse to $W^s(X^i) \cap S$ in $S$.
By Lemma~\ref{lem:counting}, $\text{dim }W^u(X^j) \geq \text{dim }W^u(X^i)$,
so we may choose $D$ such that $\text{dim }D = \text{dim }B$.

If $X^i$ is an equilibrium point let $f = \phi_1$, and if $X^i$ is a periodic
orbit let $f$ be a $C^1$ first return map for $S$.
Then, by the Inclination Lemma for equilibria or periodic orbits,
there exists $n_0 > 0$ such that 
$n \geq n_0$ implies $f^n(D)$ is $\epsilon$ $C^1$-close to $B$.
By the choice of $\epsilon$, $f^n(D) \cap W^s(X^s) \neq \emptyset$.
Since $D \subset W^u(X^j)$ invariant, this implies that $W^u(X^j) \cap \linebreak W^s(X^s) \neq \emptyset$.
\end{proof}

Lemma~\ref{lem:int1} was reported as \cite[Theorem~3-8]{Ch88},
where our Assumption~\ref{as:wand1} was replaced by the weaker assumption
that for every $x \in \partial W^s(X^s)$, the trajectory of $x$ converges to
a critical element in forward time.
However, the proof of \cite[Theorem~3-8]{Ch88} relies crucially
on \cite[Lemma~3-5]{Ch88}, which has been disproven \cite{Fi20},
to show that a particular heteroclinic sequence has finite length.
In contrast, the proof of Lemma~\ref{lem:int1} shows that
an analogous heteroclinic sequence has finite length. This result uses
Lemma~\ref{lem:het} which relies on Assumption~\ref{as:wand1}.

\begin{lemma}\label{lem:int1}
If $X^i \subset \partial W^s(X^s)$ then
$W^u(X^i) \cap W^s(X^s) \neq \emptyset$.
\end{lemma}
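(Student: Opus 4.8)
\textit{Proof proposal for Lemma~\textup{\ref{lem:int1}}.}
The plan is to argue by contradiction: assuming $W^u(X^i) \cap W^s(X^s) = \emptyset$, I would use Lemma~\ref{lem:bound1} together with Lemma~\ref{lem:open} to build an infinite chain of heteroclinic connections among the finitely many critical elements in $\partial W^s(X^s)$, which forces a heteroclinic cycle and contradicts Lemma~\ref{lem:het} (equivalently, Lemma~\ref{lem:trans} and Lemma~\ref{lem:homo}).

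Concretely, set $X^{(1)} := X^i$ and prove by induction that, for every $\ell \geq 1$, one has $X^{(\ell)} \subset \partial W^s(X^s)$ and $W^u(X^{(\ell)}) \cap W^s(X^s) = \emptyset$, and that there is a point $q_\ell \in (W^u(X^{(\ell)}) - X^{(\ell)}) \cap (W^s(X^{(\ell+1)}) - X^{(\ell+1)})$ for some critical element $X^{(\ell+1)}$ indexed by $I$, with this intersection transverse. The base case $\ell = 1$ is the contradiction hypothesis. For the inductive step: by Lemma~\ref{lem:bound1} there is $q_\ell \in (W^u(X^{(\ell)}) - X^{(\ell)}) \cap \overline{W^s}(X^s)$, and since $W^u(X^{(\ell)}) \cap W^s(X^s) = \emptyset$ we actually have $q_\ell \in \partial W^s(X^s)$. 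Because $\partial W^s(X^s)$ is closed and invariant (established just before Lemma~\ref{lem:tech1}), the orbit of $q_\ell$, and hence $\omega(q_\ell)$, lies in $\partial W^s(X^s)$; by the remark following Assumptions~\ref{as:wand1}--\ref{as:inf1}, $\omega(q_\ell) = X^{(\ell+1)}$ for some $X^{(\ell+1)} \subset \partial W^s(X^s)$ indexed by $I$. Then $q_\ell \in W^s(X^{(\ell+1)}) \cap W^u(X^{(\ell)})$, and $q_\ell \notin X^{(\ell+1)}$ (otherwise $\alpha(q_\ell) = \omega(q_\ell) = X^{(\ell+1)}$ would force $X^{(\ell)} = X^{(\ell+1)}$ and $q_\ell \in X^{(\ell)}$, a contradiction), so $q_\ell$ lies in the indicated punctured intersection, which is transverse by Assumption~\ref{as:trans1}. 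Finally, applying Lemma~\ref{lem:open} with $X^i \mapsto X^{(\ell+1)}$ and $X^j \mapsto X^{(\ell)}$: if $W^u(X^{(\ell+1)}) \cap W^s(X^s) \neq \emptyset$, then since $W^s(X^{(\ell+1)}) \cap W^u(X^{(\ell)}) \neq \emptyset$ the lemma would give $W^u(X^{(\ell)}) \cap W^s(X^s) \neq \emptyset$, contradicting the inductive hypothesis; hence $W^u(X^{(\ell+1)}) \cap W^s(X^s) = \emptyset$, completing the step.

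This produces an infinite sequence $X^{(1)}, X^{(2)}, X^{(3)}, \dots$ of critical elements in $\partial W^s(X^s)$ joined by transverse heteroclinic connections. Since $I$ is finite (Assumption~\ref{as:wand1}), there are indices $a < b$ with $X^{(a)} = X^{(b)}$. Applying transitivity (Lemma~\ref{lem:trans}) repeatedly along the chain of connections from $X^{(a)}$ to $X^{(a+1)}$, then to $X^{(a+2)}$, and so on up to $X^{(b)}$, yields $(W^s(X^{(a)}) - X^{(a)}) \cap (W^u(X^{(a)}) - X^{(a)}) \neq \emptyset$, contradicting Lemma~\ref{lem:homo}. (Equivalently, $X^{(a)}, \dots, X^{(b)}$ is a heteroclinic cycle after relabeling to the orientation of the definition, contradicting Lemma~\ref{lem:het}.) Therefore the assumption was false and $W^u(X^i) \cap W^s(X^s) \neq \emptyset$.

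The main obstacle is making the induction genuinely well-founded: at each stage one needs the newly produced critical element $X^{(\ell+1)}$ to again lie in $\partial W^s(X^s)$, so that Lemma~\ref{lem:bound1} can be reapplied, and to again satisfy $W^u(X^{(\ell+1)}) \cap W^s(X^s) = \emptyset$, so that the next point $q_{\ell+1}$ is forced into $\partial W^s(X^s)$ rather than into $W^s(X^s)$ (the latter would finish the proof rather than continue the induction). The first is handled by invariance and closedness of $\partial W^s(X^s)$; the second is exactly the content of Lemma~\ref{lem:open}. A secondary, purely bookkeeping issue is matching the orientation of the chain of connections built here to the orientation convention used in the definition of a heteroclinic cycle, which is why invoking Lemma~\ref{lem:trans} and Lemma~\ref{lem:homo} directly is marginally cleaner than invoking Lemma~\ref{lem:het}.
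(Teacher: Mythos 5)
Your proposal is correct and is essentially the paper's own argument run in contrapositive form: the paper builds the same heteroclinic chain via Lemma~\ref{lem:bound1}, invokes Lemma~\ref{lem:het} to terminate it, and then propagates the intersection with $W^s(X^s)$ backwards along the chain using Lemma~\ref{lem:open}, whereas you assume non-intersection, propagate it forwards via the contrapositive of Lemma~\ref{lem:open}, and derive a cycle contradicting Lemma~\ref{lem:homo}. The ingredients and their roles are identical, and your attention to the well-foundedness issues (invariance and closedness of $\partial W^s(X^s)$, the punctured transverse intersection, and the orientation convention for heteroclinic cycles) matches what the paper's proof requires.
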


\begin{proof}[Proof of Lemma~\ref{lem:int1}]
We first construct a heteroclinic sequence of critical elements,
which must be finite by Lemma~\ref{lem:het}.
Then we show that the unstable manifold of the final critical element in 
the sequence intersects $W^s(X^s)$.
Working backwards, we argue that the unstable manifold of every
critical element in the sequence intersects $W^s(X^s)$
using Lemma~\ref{lem:open}, which implies the result.

The first step is the construction of the heteroclinic sequence
$\{X^j\}_{j \in \Lambda}$.
As $X^j \subset \partial W^s(X^s)$, by Lemma~\ref{lem:bound1} there exists 
$x_j \in (W^u(X^j)-X^j) \cap \overline{W^s}(X^s)$.
If $x_j \in W^s(X^s)$ then we have finished constructing the heteroclinic
sequence, so suppose $x_j \in \partial W^s(X^s)$.
Then by Assumptions~\ref{as:wand1}-\ref{as:inf1},
$x_j \in W^s(X^{j+1})$ for some critical element
$X^{j+1} \subset \partial W^s(X^s)$.
Iterating this procedure yields a heteroclinic sequence $\{X^j\}_{j \in \Lambda}$.
By Lemma~\ref{lem:het} it has finite length.
The final element of the sequence, call it $X^m$, must satisfy 
$W^u(X^m) \cap W^s(X^s) \neq \emptyset$, since otherwise
there would be another element $X^{m+1}$ that would be added to the
heteroclinic sequence by the procedure above.

We conclude by showing that the unstable manifold of each critical
element in the heteroclinic sequence must intersect $W^s(X^s)$,
which implies the result.
For any $j \in \Lambda$, suppose $W^u(X^j) \cap W^s(X^s) \neq \emptyset$.
By recursion, it suffices to show that this implies 
$W^u(X^{j-1}) \cap \linebreak W^s(X^s) \neq \emptyset$.
However, by the construction of the sequence we have that
$W^u(X^{j-1}) \cap \linebreak W^s(X^j) \neq \emptyset$ is a transversal
intersection.
Hence, the result follows from Lemma~\ref{lem:open}.
\end{proof}

\begin{proof}[Proof of Theorem~\ref{thm:bound1}]
Fix $i \in I$. By Lemma~\ref{lem:int1},
$W^u(X^i) \cap W^s(X^s) \neq \emptyset$.
To show that $W^s(X^i) \subset \partial W^s(X^s)$,
it suffices to show that $W^s_{\text{loc}}(X^i) \subset \partial W^s(X^s)$
since $\partial W^s(X^s)$ is invariant and by the definition of $W^s(X^i)$.
Let $x \in W^s_{\text{loc}}(X^i)$.
By the proof of Lemma~\ref{lem:open}, 
there exists a disk $D$ centered at $x$, contained in the
$\epsilon$-neighborhood
of $x$ in $M$, and transverse to $W^s_{\text{loc}}(X^i)$, such that
$\phi_t(D) \cap W^s(X^s) \neq \emptyset$ for some $t > 0$.
By invariance, $D \cap W^s(X^s) \neq \emptyset$.
Since $D$ is contained in the $\epsilon$-neighborhood of $x$ in $M$,
$d_S(x,\overline{W^s}(X^s)) = d_S(x,W^s(X^s)) \leq \epsilon$.
As this holds for all $\epsilon > 0$,
$d_S(x,\overline{W^s}(X^s)) = 0$.
Since $\{x\}$ is compact and $\overline{W^s}(X^s)$ is closed, this implies
that $x \in \overline{W^s}(X^s)$.
However, $x \in W^s_{\text{loc}}(X^i)$ implies that 
$x \in \partial W^s(X^s)$.
Thus, $W^s_{\text{loc}}(X^i) \subset \partial W^s(X^s)$,
so $W^s(X^i) \subset \partial W^s(X^s)$.
Hence $\bigcup_{i \in I} W^s(X^i) \subset \partial W^s(X^s)$.

By Assumption~\ref{as:inf1}, if $\gamma \subset \partial W^s(X^s)$ is an
orbit then $\omega(\gamma) = X^j$ for some $j \in I$,
which implies that $\gamma \subset W^s(X^j)$.
Thus, $\partial W^s(X^s) \subset \bigcup_{i \in I} W^s(X^i)$.
\end{proof}

\section{Proofs of Theorem~\ref{thm:bound2} and Corollaries}\label{sec:thm21}

The proofs of Theorem~\ref{thm:bound2} and its corollaries proceed by paralleling the treatment of the fixed parameter case in Section~\ref{sec:thm11}. The recurring strategy of the proofs of this section is to reduce to the fixed parameter case where possible, and then to rely on the results and proofs of Section~\ref{sec:thm11}
to complete the arguments.

Recall the notation from Section~\ref{sec:params}.  In particular, let $V$ denote the $C^1$ vector field on $M \times J$ defined by $V(x,p) = (V_p(x),0)$, let $\phi$ be the $C^1$ flow of $V$, and for any fixed $t \in \mathbb{R}$ let $\phi_t:M \times J \to M \times J$ be the $C^1$ diffeomorphism defined by $\phi_t(x,p) = \phi(t,x,p)$. For the remainder of this section, fix $p_0 \in J$ such that $p_0$ satisfies Assumptions~\ref{as:wand2}-\ref{as:trans2}.

We begin by defining functions whose images for each $p \in J$ are the
critical elements and their local stable and unstable manifolds for the
vector field $V_p$. As there are finitely many hyperbolic critical elements $\{X^i_{p_0}\}_{i \in I}$, we may
assume $J$ sufficiently small such that they and their local stable
and unstable manifolds are well defined and vary $C^1$ continuously
with parameter over $J$.
Let $S^s = W^s_{\text{loc}}(X^s_{p_0})$ and for $i \in I$, let $S^i = X^i(p_0)$,
$S^i_s = W^s_{\text{loc}}(X^i_{p_0})$, and $S^i_u = W^u_{\text{loc}}(X^i_{p_0})$.
As the critical elements and their local stable and unstable manifolds
vary $C^1$ continuously with parameter, there exist $C^1$ maps,
\begin{equation}
F^i:S^i \times J \to M, \quad F^i_s:S^i_s \times J \to M, \quad
F^i_u:S^i_u \times J \to M, \quad F^s:S^s \times J \to M, \label{eq:Fs}
\end{equation}
such that for any $p \in J$,
$F^i(\cdot,p)$, $F^i_s(\cdot,p)$, $F^i_u(\cdot,p)$, and $F^s(\cdot,p)$ are $C^1$
diffeomorphisms onto $X^i_p$, $W^s_{\text{loc}}(X^i_p)$,
$W^u_{\text{loc}}(X^i_p)$, and $W^s_{\text{loc}}(X^s_p)$, respectively. 
In other words, $F^i$, $F^i_s$, $F^i_u$, and $F^s$ describe quantitatively
how the critical elements and their local stable and unstable manifolds
vary $C^1$ with parameter $p$.
Let $\pi_J$ be the projection onto parameter space, $\pi_J(x,p) = p$.
The functions above have codomain $M$, but it will sometimes be convenient for the codomain to be $M \times J$.
To this end, let $G^i = (F^i,\pi_J)$, $G^i_s = (F^i_s,\pi_J)$,
$G^i_u = (F^i_u,\pi_J)$, and $G^s = (F^s,\pi_J)$, and note that these
functions are $C^1$ injections because for fixed $p \in J$ the functions
\eqref{eq:Fs} are $C^1$ diffeomorphisms onto their images.

Lemma~\ref{lem:open2} establishes properties of  $W^s(X^s_J)$ that will be used in subsequent developments.

\begin{lemma}\label{lem:open2}
$W^s(X^s_J)$ is open and invariant in $M \times J$.
\end{lemma}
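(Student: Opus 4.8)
The plan is to treat invariance first, where it is essentially a formal consequence of the product structure of $V$, and then to obtain openness by upgrading the (classical) openness of each individual basin $W^s(X^s_p)$ in $M$ to openness of the assembled set $W^s(X^s_J)$ in $M\times J$; the bridge is the $C^1$ dependence of the local stable manifolds on the parameter, packaged in the map $G^s$, together with invariance of domain.

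For invariance: since the $J$-component of $V$ vanishes, $\phi_t$ preserves each slice $M_p=M\times\{p\}$ and acts on $M_p$ as the time-$t$ flow of $V_p$. Each basin $W^s(X^s_p)$ is invariant under the flow of $V_p$ (recalled in Section~\ref{sec:defs}), so $\phi_t$ carries $W^s(X^s_p)\subset M_p$ onto itself for every $t$; taking the union over $p\in J$ gives $\phi_t(W^s(X^s_J))=W^s(X^s_J)$ for all $t$.

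For openness, the key step is to produce a single open set $\mathcal U\subset M\times J$ with $X^s_J\subset\mathcal U\subset W^s(X^s_J)$. Recall that $S^s=W^s_{\text{loc}}(X^s_{p_0})$ is a compact $C^1$ disk of dimension $n=\dim M$ (full-dimensional, since $X^s_{p_0}$ is a stable equilibrium) and that $G^s=(F^s,\pi_J):S^s\times J\to M\times J$ is a $C^1$ injection with $F^s(\cdot,p)$ a $C^1$ diffeomorphism onto $W^s_{\text{loc}}(X^s_p)$. Since $\dim(S^s\times J)=\dim(M\times J)$, invariance of domain \cite[Theorem~2B.3]{Ha01} makes $G^s$ an open map, so $\mathcal U:=G^s(\text{int}(S^s)\times J)=\sqcup_{p\in J}F^s(\text{int}(S^s),p)$ is open in $M\times J$. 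It contains $X^s_J$ because $F^s(\cdot,p)$ carries $\text{int}(S^s)$ onto $\text{int }W^s_{\text{loc}}(X^s_p)$, which contains $X^s_p$, and it lies in $W^s(X^s_J)$ because $W^s_{\text{loc}}(X^s_p)\subset W^s(X^s_p)$ for each $p$. Now take any $(x_0,p_0)\in W^s(X^s_J)$: the forward orbit of $x_0$ under $V_{p_0}$ converges to $X^s_{p_0}$, and $\mathcal U\cap M_{p_0}$ is a neighborhood of $X^s_{p_0}$ in $M_{p_0}$, so there is $T>0$ with $\phi_T(x_0,p_0)\in\mathcal U$. Then $\phi_{-T}(\mathcal U)$ is an open neighborhood of $(x_0,p_0)$, and by the invariance just established $\phi_{-T}(\mathcal U)\subset\phi_{-T}(W^s(X^s_J))=W^s(X^s_J)$; hence $(x_0,p_0)$ is interior to $W^s(X^s_J)$, and since it was arbitrary, $W^s(X^s_J)$ is open.

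The one genuinely delicate point is the construction of $\mathcal U$: openness of the slices $W^s(X^s_p)$ in $M$ is standard, but the content of the lemma is the uniformity across parameters, namely that the slices fit together into a set open in the product $M\times J$, and it is exactly the $C^1$ parameter dependence of $W^s_{\text{loc}}$ (through $F^s$ and $G^s$) plus invariance of domain that supplies this. Everything else is bookkeeping with the product structure of the flow.
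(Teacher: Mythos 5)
Your proof is correct and follows essentially the same route as the paper: both arguments get openness by applying invariance of domain to $G^s$ restricted to $\text{int }S^s\times J$ and then sweeping the resulting open set backward under the flow, and both get invariance from the slice-wise invariance of the individual basins. Your version merely makes the pointwise neighborhood argument explicit where the paper writes $W^s(X^s_J)=\bigcup_{t\leq 0}\phi_t\left(G^s(\text{int }S^s\times J)\right)$ directly.
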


\begin{proof}[Proof of Lemma~\ref{lem:open2}]
Since $S^s$ is equal to $W^s_{\text{loc}}(X^s_{p_0})$,
a codimension-zero embedded submanifold with boundary in $M$, $G^s|_{\text{int }S^s \times J}$ is a continuous injection between manifolds
of the same dimension so, by invariance of domain \cite[Theorem~2B.3]{Ha01},
an open map.
Thus, $G^s(\text{int }S^s \times J)$ is an open set in $M \times J$.
Hence, by definition of the local stable manifold,
$W^s(X^s_J) = \bigcup_{t \leq 0} \phi_t(G^s(\text{int }S^s \times J))$
is a union of open sets since $\phi_t$ is a $C^1$ diffeomorphism for
each $t$, hence open.
Since $W^s(X^s_J) = \sqcup_{p \in J} W^s(X^s_p)$ is a union of invariant sets, it is invariant.
\end{proof}

Let $p_0 \in J$ be a fixed parameter value such that
Assumptions~\ref{as:wand2}-\ref{as:trans2} hold.
Recall from Section~\ref{sec:defs} that the family of a critical element refers
here to the family obtained from a single critical element as the parameter
value is varied over $p \in J$.
Similar to Lemma~\ref{lem:tech1},
Lemma~\ref{lem:tech2} provides a technical construction, for
any critical element contained in $\partial W^s(X^s_J)$, of a compact set contained in its family of unstable manifolds.
The lemma proceeds to show that for any
sufficiently small neighborhood $N$ of this compact set in $M \times J$, 
the union over all negative times $t$ of the flow $\phi_t$ of $N$, together with the family of stable manifolds of the critical element, contains
an open neighborhood of the critical element in $M \times J$.
The key difference from the fixed parameter case Lemma~\ref{lem:tech1}
is that the open neighborhood that is contained
in the union is open in $M \times J$, whereas for Lemma~\ref{lem:tech1}
it was open in $M$ alone.
This is important because for a critical element contained in
$\partial W^s(X^s_J)$, an open neighborhood in $M \times J$ of that critical
element is required to guarantee it intersects $W^s(X^s_J)$.
This result will be fundamental in proving the claim that if
a critical element in $M_{p_0}$ is contained in $\partial W^s(X^s_J)$ then its
unstable manifold intersects $\overline{W^s}(X^s_J)$ in $M_{p_0}$.
Recall that if $D$ is a subset of a metric space and $\epsilon > 0$,
the notation $D_\epsilon$ refers to the subset of the metric space
such that for each $x \in D_\epsilon$ there exists $y \in D$ with
$d(x,y) < \epsilon$.

\begin{lemma}\label{lem:tech2}
For any $i \in I$ and any $\epsilon > 0$ sufficiently small, there exists a compact set $D \subset W^u_{\text{loc}}(X^i_J) - X^i_J$ and an open neighborhood $N$ of $D$ in $M \times J$ such that $N \subset D_\epsilon$, $D_\epsilon \cap X^i_J = \emptyset$, and $\bigcup_{t \leq 0} \phi_t(N) \cup W^s(X^i_J)$ contains an open neighborhood of $X^i_{p_0}$ in $M \times J$.
\end{lemma}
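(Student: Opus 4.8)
The plan is to repeat, essentially verbatim, the construction in the proof of Lemma~\ref{lem:tech1}, now carried out in the product manifold $M \times J$ equipped with the vector field $V$, exploiting that $V$ is fiber-preserving over $J$ and that every piece of geometric data used in that proof varies $C^1$ continuously with the parameter. By Remark~\ref{rem:hyp}, after shrinking $J$ (we may restrict to a relatively compact neighborhood of $p_0$) the sets $X^i_J$, $W^s_{\text{loc}}(X^i_J)$, and $W^u_{\text{loc}}(X^i_J)$ are $C^1$ submanifolds of $M \times J$, described fiberwise by the maps $G^i$, $G^i_s$, $G^i_u$ of \eqref{eq:Fs}; and, since any compact $D \subset W^u_{\text{loc}}(X^i_J) - X^i_J$ is then disjoint from $X^i_J$ and fibered over a relatively compact parameter set, for $\epsilon > 0$ sufficiently small $D_\epsilon \cap X^i_J = \emptyset$. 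This takes care of the ``sufficiently small'' qualifier and the condition $D_\epsilon \cap X^i_J = \emptyset$ at the outset.

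Next I would fix the iterating map $g : M \times J \to M \times J$: if $X^i$ is an equilibrium, take $g = \phi_1$; if $X^i$ is a periodic orbit, choose a $C^1$ family of cross sections $S_p$ of $X^i_p$ (which vary $C^1$ with $p$ since $X^i_p$ does), set $S = \sqcup_{p \in J} S_p$, and let $g$ be the first return map of $S$ under $\phi$. In either case $g$ is fiber-preserving and restricts on each fiber to the map used in the proof of Lemma~\ref{lem:tech1} for $V_p$. Setting $D' = W^u_{\text{loc}}(X^i_J)$ (intersected with $S$ in the periodic case) and $D = \overline{D' - g^{-1}(D')}$ gives a compact subset of $W^u_{\text{loc}}(X^i_J) - X^i_J$. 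As in Lemma~\ref{lem:tech1}, I would then build a $C^1$ continuous family of disks $\{D(z)\}_{z \in D}$ centered along $D$, transverse to $W^u_{\text{loc}}(X^i_J)$, and contained in an open set $N \subset D_\epsilon$, and extend it by backward iteration under $g$, together with the disk $W^s_{\text{loc}}(X^i_J)$ over $X^i_J$ (intersected with $S$ and flowed around the orbit in the periodic case), to a $C^1$ continuous family $\{D(z)\}_{z \in W^u_{\text{loc}}(X^i_J)}$. Since each disk $D(z)$ lies in the fiber over $\pi_J(z)$, it has the same dimension as the corresponding transverse disk in the fixed-parameter construction for $V_{\pi_J(z)}$; only the base $W^u_{\text{loc}}(X^i_J)$ along which the disks are centered carries the extra $\dim J$ directions, so the union of the family has dimension $\dim(M \times J)$.

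The $C^1$ continuity of the extended family would be verified as in Lemma~\ref{lem:tech1}, applying the Inclination Lemma \cite{Pa69} fiberwise; here the $C^1$ dependence of $\phi$, $g$, and the local invariant manifolds on $p$, together with the relative compactness of the parameter set (used to make the Inclination Lemma estimates locally uniform in $p$), ensure that the backward-iterated disks approach the central sheet $W^s_{\text{loc}}(X^i_J)$ in $C^1$ jointly in the iterate count and the parameter. Once this family is in hand, its union is the image of a continuous injection from a manifold of dimension $\dim(M \times J)$ into $M \times J$, so by invariance of domain \cite[Theorem~2B.3]{Ha01} it is open; by construction it contains $X^i_J$, hence an open neighborhood of $X^i_{p_0}$ in $M \times J$. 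Finally, every disk in the family other than the central sheet $W^s_{\text{loc}}(X^i_J)$ is a backward $g$-iterate of a disk contained in $N$, hence lies in $\bigcup_{t \leq 0} \phi_t(N)$, while the central sheet lies in $W^s(X^i_J)$; therefore $\bigcup_{t \leq 0} \phi_t(N) \cup W^s(X^i_J)$ contains an open neighborhood of $X^i_{p_0}$, as required.

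I expect the main obstacle to be the parametrized Inclination Lemma step: getting the backward-iterated disks to form a genuinely $C^1$ continuous family all the way into $X^i_J$, uniformly enough in $p$, which is exactly why the relative compactness of the parameter set and the $C^1$ dependence of the flow and local invariant manifolds on the parameter are essential. A secondary, more routine difficulty is the bookkeeping needed to make the cross-section construction for a periodic orbit $X^i$ depend $C^1$ on $p$ (varying cross sections and return times) and to verify that, after flowing the cross-section neighborhood around the orbit, the resulting open set is still captured by $\bigcup_{t \leq 0} \phi_t(N)$.
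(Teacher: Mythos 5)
Your proposal is correct and follows essentially the same route as the paper: both repeat the disk-family construction of Lemma~\ref{lem:tech1} in the parametrized setting, with the crux being exactly the point you flag --- uniform-in-$p$ control of the Inclination Lemma estimates over a relatively compact parameter set, obtained from $C^1$ dependence of the flow and local invariant manifolds on $p$. The only organizational difference is that you build one global disk family over $W^u_{\text{loc}}(X^i_J)$ and apply invariance of domain once in $M \times J$, whereas the paper constructs the family fiberwise for each $V_p$, shows the resulting families are uniformly $C^1$-close to the one at $p_0$, extracts a common open set $\hat{U} \subset M$ contained in every fiber's image, and takes $\hat{U} \times \text{int }J'$ as the neighborhood; the two packagings are interchangeable here.
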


\begin{proof}[Proof Outline of Lemma~\ref{lem:tech2}]
If $X^i_{p_0}$ is an equilibrium point, let $f = \phi_1$ be the time-1 flow of the vector field $V$. If $X^i_{p_0}$ is a periodic orbit, let $f = \tau$ be the first
return map of a Poincar\'e cross section $S$. (Note that this map is
well-defined and $C^1$ with respect to parameter value $p \in J$.)
Let $D'_p = G^i_u(S^i_u \times\{p\})$ for any $p \in J$.
Let $D_p$ be the topological closure of $D'_p - f^{-1}(D'_p)$ in $M$.
We will prove the following claim: there exists an open neighborhood $N'$ of
$D_{p_0}$ in $M$ and an open neighborhood $\hat{U}$ of $X^i_{p_0}$ in $M$
such that for $J$ sufficiently small, $p \in J$ implies that $D_p \subset N' \subset \overline{N'} \subset (D_p)_\epsilon$, $X^i_p \subset \hat{U}$, and the forward orbit of any point $x \in \hat{U} - W^s(X^i_p)$ under $V_p$ enters $N'$ in finite time. Fig.~\ref{fig:tech1} illustrates an analogous claim for the case of a single fixed parameter value. From the claim made here, the main result can be shown as follows. Choose a subset $J' \subset J$ compact and connected with $p_0 \in \text{int }J'$. Let $D' = G^i_u(S^i_u \times J')$, the continuous image of a compact set, hence compact in $M \times J$. Note that, by definition of $G^i_u$, $D' = \sqcup_{p \in J'} W^u_{\text{loc}}(X^i_p)$. Let $D$ be the topological closure of $D'-f^{-1}(D')$ in $M \times J$.
Since $D'$ is contained in the local unstable manifold,
$f^{-1}|_{D'}$ is contracting. Hence, $f^{-1}(D') \subset D'$, so
$D \subset D'$. As $D$ is closed in $D'$ compact, $D$ is compact.  

Let $N'$ and $\hat{U}$ be as in the claim above. Then,
\begin{align*}
    D'-f^{-1}(D') = \sqcup_{p \in J'} \left(W^u_{\text{loc}}(X^i_p) - f^{-1}(W^u_{\text{loc}}(X^i_p))\right) 
    \subset \sqcup_{p \in J'} D_p 
    &\subset \overline{N'} \times J'.
\end{align*}
As $\overline{N'} \times J'$ is closed in $M \times J$, and since $D$ is the topological closure of $D'-f^{-1}(D')$ in $M \times J$, this implies that $D \subset \overline{N'} \times J'$.
Furthermore, $\sqcup_{p \in J'} D_p \subset D$ so $\overline{N'} \times J' = \sqcup_{p \in J'} \overline{N'} \subset \sqcup_{p \in J'} (D_p)_\epsilon \subset D_\epsilon$, which implies that $\overline{N'} \times J' \subset D_\epsilon$.
As $\overline{N'} \times J'$ is compact and disjoint from $\partial D_\epsilon$ which is closed in $M \times J$, there exists $r > 0$ such that $\overline{N'} \times J' \subset \left(\overline{N'} \times J'\right)_r \subset D_\epsilon$.
Let $N = \left(\overline{N'} \times J'\right)_r$. Then $N$ is open in $M \times J$ and satisfies $D \subset N \subset D_\epsilon$.
Let $U = \hat{U} \times \text{int }J'$.
Then $U$ is open in $M \times J$, $X^i_{p_0} \subset \hat{U} \times \{p_0\} \subset U$, and for every $(x,p) \in U - W^s(X^s_J)$, the forward orbit of $(x,p)$ under $V$ enters $N' \times \{p\} \subset N$ in finite time.
Thus, $\bigcup_{t \leq 0} \phi_t(N) \cup W^s(X^i_J)$ contains $U$, which completes the proof.


So, it suffices to prove the claim above.
We begin with the construction of the $C^1$ disk family for $f_{p_0}$ exactly
as in the proof of Lemma~\ref{lem:tech1}.
Then it is shown using the Inclination Lemma that for a $C^1$ perturbation
of the diffeomorphism $f_{p_0}$, constructing the $C^1$ disk family for the
perturbed
diffeomorphism gives a $C^1$ continuous disk family that is uniformly $C^1$-
close to the original $C^1$ continuous disk family.
Consequently, it is possible to choose $\hat{U}$ an open neighborhood of
$W^u_{\text{loc}}(X^i_{p_0})$ sufficiently small such that it is contained in the
perturbed disk family and, therefore, the forward orbit of each point in
$\hat{U}$ under the perturbed diffeomorphism either converges to the
perturbation of $X^i_{p_0}$ or enters $N'$ in finite time.
The full proof is provided in Appendix~\ref{ap:two}.
\end{proof}

The technical construction of Lemma~\ref{lem:tech2} is used to
show that the unstable manifold of any critical element
in $\partial W^s(X^s_J)$ must have nonempty
intersection with $\overline{W^s}(X^s_J) \cap M_{p_0}$.
By requiring that the intersection occurs in $M_{p_0}$, we will be able
to reduce to the fixed parameter case of Lemma~\ref{lem:int1},
which will ensure that
the unstable manifold actually intersects $W^s(X^s_J) \cap M_{p_0}$
(see Lemma~\ref{lem:int2} below).
Although Lemma~\ref{lem:bound1} and Lemma~\ref{lem:bound2} both
show the intersection of the unstable manifold with the closure of a
stable manifold, there is a crucial difference: for Lemma~\ref{lem:bound1}
this closure is taken in $M$ for a fixed parameter,
whereas for Lemma~\ref{lem:bound2} the closure is taken in $M \times J$.
As Example~\ref{ex:haus} showed, taking the closure in $M \times J$,
namely $\overline{W^s}(X^s_J)$, 
will in general give a larger set than taking the closure in $M$,
namely $\sqcup_{p \in J} \overline{W^s}(X^s_p)$.
This motivates the need for Lemma~\ref{lem:tech2} and Lemma~\ref{lem:bound2}
to explicitly treat the more difficult case where the closure is taken
in $M \times J$.

\begin{lemma}\label{lem:bound2}
For any $i \in I$,
$\{W^u(X^i_{p_0}) - X^i_{p_0}\} \cap \overline{W^s}(X^s_J)
\neq \emptyset$.
\end{lemma}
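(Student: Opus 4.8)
\textbf{Proof proposal for Lemma~\ref{lem:bound2}.}
The plan is to mirror the proof of Lemma~\ref{lem:bound1}, with Lemma~\ref{lem:tech2} playing the role of Lemma~\ref{lem:tech1}, $M \times J$ replacing $M$, and $W^s(X^s_J)$ replacing $W^s(X^s)$; recall that $W^s(X^s_J)$ is open and invariant in $M \times J$ by Lemma~\ref{lem:open2}. Fix $i \in I$. Since $X^i_{p_0} \subset \partial W^s(X^s_J)$ and $W^s(X^s_J)$ is open, $X^i_{p_0}$ is disjoint from $W^s(X^s_J)$; in particular $X^i_{p_0} \neq X^s_{p_0}$, since $X^s_{p_0} \in W^s(X^s_J)$. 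It is also convenient to record first that $W^s(X^i_J) \cap W^s(X^s_J) = \emptyset$: a common point would lie in a single fiber $M_p$, and its forward $V_p$-orbit would have to converge to both $X^i_p$ and $X^s_p$, which are distinct for $J$ sufficiently small by persistence of the hyperbolic critical elements.

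The core of the argument is to produce, from the local stable and unstable manifolds, an open neighborhood of $X^i_{p_0}$ in $M \times J$ that must meet $W^s(X^s_J)$, and then push this information back onto the (compact) local unstable manifold. Fix a compact connected neighborhood $J' \subset J$ of $p_0$ with $p_0 \in \text{int }J'$, and let $D \subset W^u_{\text{loc}}(X^i_{J'}) - X^i_{J'}$ be the compact ``fundamental domain'' set constructed in the proof of Lemma~\ref{lem:tech2} (which depends only on $J'$). For each $n$, apply Lemma~\ref{lem:tech2} with $\epsilon = 1/n$ to get an open neighborhood $N_n$ of $D$ in $M \times J$ with $N_n \subset D_{1/n}$ such that $\bigcup_{t \le 0}\phi_t(N_n) \cup W^s(X^i_J)$ contains an open neighborhood $U_n$ of $X^i_{p_0}$ in $M \times J$. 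Since $X^i_{p_0} \subset \partial W^s(X^s_J)$, we get $U_n \cap W^s(X^s_J) \neq \emptyset$; since $W^s(X^i_J) \cap W^s(X^s_J) = \emptyset$, this forces $\phi_{T_n}(N_n) \cap W^s(X^s_J) \neq \emptyset$ for some $T_n \le 0$; and by invariance of $W^s(X^s_J)$ (Lemma~\ref{lem:open2}), applying $\phi_{-T_n}$ gives $N_n \cap W^s(X^s_J) \neq \emptyset$. Hence $d_S(D, W^s(X^s_J)) < 1/n$ for all $n$, so $d_S(D, \overline{W^s}(X^s_J)) = 0$, and since $D$ is compact and $\overline{W^s}(X^s_J)$ is closed, $D \cap \overline{W^s}(X^s_J) \neq \emptyset$. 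This already yields $\{W^u_{\text{loc}}(X^i_{J'}) - X^i_{J'}\} \cap \overline{W^s}(X^s_J) \neq \emptyset$.

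It remains to localize this intersection to the fiber $p_0$, which I expect to be the only genuinely new difficulty beyond the fixed-parameter case, since Lemma~\ref{lem:tech2} naturally produces data spread over a parameter slab $J'$. I would run the previous paragraph along a nested sequence of such neighborhoods $J'_k$ with $\bigcap_k J'_k = \{p_0\}$, obtaining $z_k \in \{W^u_{\text{loc}}(X^i_{J'_k}) - X^i_{J'_k}\} \cap \overline{W^s}(X^s_J)$, say $z_k = G^i_u(s_k, p_k)$ with $p_k \to p_0$ and $s_k \in S^i_u$. By compactness of $S^i_u$, pass to a subsequence with $s_k \to s^*$; then $z_k \to z^* := G^i_u(s^*, p_0) \in W^u_{\text{loc}}(X^i_{p_0})$ by continuity of $G^i_u$, and $z^* \in \overline{W^s}(X^s_J)$ since that set is closed. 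Finally, because $f^{-1}$ restricted to $W^u_{\text{loc}}(X^i_p)$ is a contraction towards $X^i_p$ with rate bounded uniformly over the compact parameter set $J'_1$, the fundamental-domain sets $D$ stay uniformly bounded away from the compact family $X^i_{J'_1}$, so $z^* \notin X^i_{p_0}$. Thus $z^* \in \{W^u(X^i_{p_0}) - X^i_{p_0}\} \cap \overline{W^s}(X^s_J)$, as required. Everything else is a direct transcription of the proof of Lemma~\ref{lem:bound1} with $W^s(X^s_J)$ open and invariant in $M \times J$.
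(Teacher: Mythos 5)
Your proposal is correct and follows the same overall strategy as the paper's proof: apply Lemma~\ref{lem:tech2}, use the resulting open neighborhood of $X^i_{p_0}$ in $M\times J$ to force an intersection with $W^s(X^s_J)$, pull that intersection back into $N\subset D_\epsilon$ by invariance of $W^s(X^s_J)$, and take limits to land a point in the compact set $D$. The one place you genuinely diverge is the localization to the fiber $M_{p_0}$, which, as you note, is the only new difficulty relative to Lemma~\ref{lem:bound1}. The paper keeps a single fundamental domain $D$ over one compact slab $J'$ and instead intersects both $N$ and the neighborhood $N'$ of $X^i_{p_0}$ with $M\times B(p_0,r)$; since $N'_r$ is still an open neighborhood of $X^i_{p_0}$, the same invariance argument yields points of $W^s(X^s_J)$ in $N_r\subset D_\epsilon\cap\left(M\times B(p_0,r)\right)$, and a double limit (first $r\to 0$, then $\epsilon\to 0$) produces a point of $D\cap M_{p_0}\cap\overline{W^s}(X^s_J)$. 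You instead shrink the parameter slab $J'_k$ of the fundamental domain itself down to $\{p_0\}$ and extract a limit point via compactness of $S^i_u$ and continuity of $G^i_u$, checking separately that the limit avoids $X^i_{p_0}$ (your uniform-contraction argument works; even more simply, $D^{(k)}\subset D^{(1)}$ and $d_S(D^{(1)},X^i_{J'_1})>0$ give the uniform separation). Both routes are sound. The paper's version uses Lemma~\ref{lem:tech2} essentially as stated (needing only that $D$ can be taken independent of $\epsilon$), whereas yours must re-enter its proof to rerun the construction over each $J'_k$ and to parametrize $D^{(k)}$ by $G^i_u$; in exchange, your version makes more transparent why the limiting intersection point must sit in the fiber over $p_0$.
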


\begin{proof}[Proof of Lemma~\ref{lem:bound2}]
The proof is similar to that of Lemma~\ref{lem:bound1}, which
relied on the technical result of Lemma~\ref{lem:tech1} to show that
the distance between an annulus in $W^u_{\text{loc}}(X^i)$ (denoted by $D$
in that proof) and $W^s(X^s)$ was less than $\epsilon$ for any $\epsilon > 0$,
and then sent $\epsilon \to 0$ to establish the desired intersection.
Here, the goal is to use Lemma~\ref{lem:tech2} in a similar fashion.
The key difference is that, since the critical element $X^i_{p_0}$
lies in $\partial W^s(X^s_J)$, but not necessarily in $\partial W^s(X^s_{p_0})$, it is necessary to consider distances in parameter space $J$ as well.
In particular, Lemma~\ref{lem:tech2} is used to establish that there
exists a point $(x_r,p_r) \in W^s(X^s_J)$ such that, for any $\epsilon > 0$ sufficiently small, the distance
from an annulus in $W^u_{\text{loc}}(X^i_J)$ (denoted by $D$ in this proof)
to $(x_r,p_r)$ is less then $\epsilon$ and the distance from $p_0$ to $p_r$
is less than $r$, for $r > 0$ sufficiently small. Then, first sending $r \to 0$ and then sending $\epsilon \to 0$ results in a point in the desired intersection, which yields the main result. 
Let $B(p_0,r)$ be the ball of radius $r$ centered at $p_0$ in $J$.

Let $\epsilon > 0$.
Shrinking $\epsilon$ if necessary, by Lemma~\ref{lem:tech2} 
there exists a compact 
set $D \subset W^u(X^i_J) - X^i_J$
and an open neighborhood $N$ of $D$ in $M \times J$
such that $N \subset D_\epsilon$,
$D_\epsilon \cap X^i_J = \emptyset$,
and $\bigcup_{t \leq 0} \phi_t(N) \cup W^s(X^i_J)$
contains an open neighborhood of $X^i_{p_0}$ in $M \times J$ - call this
open neighborhood $N'$.
Let $N_r = N \cap \left(M \times B(p_0,r)\right)$
and $N'_r = N' \cap \left(M \times B(p_0,r)\right)$
be the intersections of the above neighborhoods with $M \times B(p_0,r)$.
Since $N'_r$ is an open neighborhood of 
$X^i_{p_0} \subset \partial W^s(X^s_J)$,
$N'_r \cap W^s(X^s_J) \neq \emptyset$.
Since $\bigcup_{t \leq 0} \phi_t(N_r) \cup W^s(X^i_J)$
contains $N'_r$, and because $N'_r \cap W^s(X^s_J) \neq \emptyset$
and $W^s(X^i_J) \cap W^s(X^s_J) = \emptyset$,
there exists $T > 0$ such that
$\phi_{-T}(N_r) \cap W^s(X^s_J) \neq \emptyset$.
By invariance of $W^s(X^s_J)$, this implies that
$N_r \cap W^s(X^s_J) \neq \emptyset$.
So, let $(x_r,p_r) \in N_r \cap W^s(X^s_J)$
and send $r$ to zero.
As $N_r \subset D_\epsilon \subset \overline{D}_\epsilon$ and $W^s(X^s_J) \subset \overline{W^s}(X^s_J)$,
$N_r \cap W^s(X^s_J) \subset \overline{D}_\epsilon \cap
\overline{W^s}(X^s_J)$ compact. Hence, passing to a subsequence if necessary we have that
$(x_r,p_r) \to (\hat{x}_\epsilon,\hat{p}_\epsilon) \in \overline{D}_\epsilon \cap \overline{W^s}(X^s_J)$.
By definition of $N_r$, since $r \to 0$ we must have $\hat{p}_\epsilon = p_0$, so for every $\epsilon > 0$ sufficiently small there exists $(\hat{x}_\epsilon,p_0) \in \overline{D}_\epsilon \cap \overline{W^s}(X^s_J)$. Fix some initial $\tilde{\epsilon} > 0$.  Then $\epsilon \leq \tilde{\epsilon}$ implies that $(\hat{x}_\epsilon,p_0) \in \overline{D}_{\tilde{\epsilon}} \cap \overline{W^s}(X^s_J)$ compact. So, sending $\epsilon \to 0$ and passing to a subsequence if necessary implies that $(\hat{x}_\epsilon,p_0) \to (x,p_0) \in \overline{D}_{\tilde{\epsilon}} \cap \overline{W^s}(X^s_J)$.
As $(\hat{x}_\epsilon,p_0) \in \overline{D}_\epsilon$,   $d_S((\hat{x}_\epsilon,p_0),D) \leq \epsilon$ for all $\epsilon > 0$ sufficiently small.
By continuity of $d_S$, $d_S((x,p_0),D) = \lim\limits_{\epsilon \to 0} d_S((\hat{x}_\epsilon,p_0),D) \leq \lim\limits_{\epsilon \to 0} \epsilon= 0$. Thus, $d_S((x,p_0),D) = 0$, so since $\{(x,p_0)\}$ and $D$ are compact, $(x,p_0) \in D$.
This implies that $(x,p_0) \in (D \cap M_{p_0})$.
By the above, $(x,p_0) \in \overline{W^s}(X^s_J)$ as well.
Thus, $\overline{W^s}(X^s_J) \cap (D \cap M_{p_0}) \neq \emptyset$. Since $D \cap M_{p_0} \subset W^u(X^i_{p_0}) - X^i_{p_0}$, the result follows.
\end{proof}


Thanks to the work of Lemma~\ref{lem:tech2} and Lemma~\ref{lem:bound2},
the varying parameter case treated in this section is effectively reduced to
the fixed parameter case of Section~\ref{sec:thm11}.
Hence, Lemma~\ref{lem:int2} is exactly analogous to its fixed parameter
counterpart Lemma~\ref{lem:int1} in both statement and proof.

\begin{lemma}\label{lem:int2}
For any $i \in I$, 
$W^u(X^i_{p_0}) \cap W^s(X^s_{p_0}) \neq \emptyset$.
\end{lemma}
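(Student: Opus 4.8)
The plan is to reproduce the proof of Lemma~\ref{lem:int1} almost verbatim, with $V_{p_0}$ in place of $V$, with the set $\partial W^s(X^s_J) \cap M_{p_0}$ in place of $\partial W^s(X^s)$, with the finite family $\{X^i_{p_0}\}_{i \in I}$ in place of the critical elements $\{X^i\}_{i \in I}$ there, and with Lemma~\ref{lem:bound2} invoked in place of Lemma~\ref{lem:bound1}; note also that $W^s(X^s_J) \cap M_{p_0} = W^s(X^s_{p_0})$. To justify this substitution I would first record that $\partial W^s(X^s_J) \cap M_{p_0}$ is closed and invariant in $M_{p_0}$: it is closed because $W^s(X^s_J)$ is open in $M \times J$ by Lemma~\ref{lem:open2}, and invariant because $\partial W^s(X^s_J) = \overline{W^s}(X^s_J) - W^s(X^s_J)$ is invariant (by the same argument as at the start of Section~\ref{sec:thm11}, carried out in $M \times J$) while $M_{p_0}$ is invariant under $\phi$. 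Moreover, by Assumptions~\ref{as:wand2},~\ref{as:hyp2}, and~\ref{as:trans2} the critical elements of $V_{p_0}$ in $\partial W^s(X^s_J) \cap M_{p_0}$ are exactly the finitely many $\{X^i_{p_0}\}_{i \in I}$, they are hyperbolic, their pairwise stable and unstable manifolds are transversal, and $\Omega(V_{p_0})$ is a finite union of critical elements on a neighborhood of $\partial W^s(X^s_J) \cap M_{p_0}$. These are precisely the hypotheses used in the proofs of Lemmas~\ref{lem:counting},~\ref{lem:trans},~\ref{lem:homo},~\ref{lem:het}, and~\ref{lem:open}, so each of those lemmas holds for $V_{p_0}$ under the above substitution; in particular there are no heteroclinic cycles among $\{X^i_{p_0}\}_{i \in I}$, hence every heteroclinic sequence formed from these critical elements has finite length.

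Next I would construct a heteroclinic sequence starting from $X^i_{p_0}$, exactly as in the proof of Lemma~\ref{lem:int1}. Given a critical element $X^j_{p_0}$ already in the sequence, Lemma~\ref{lem:bound2} yields a point $x_j \in W^u(X^j_{p_0}) - X^j_{p_0}$ with $(x_j,p_0) \in \overline{W^s}(X^s_J)$. Since $(x_j,p_0) \in M_{p_0}$, either $(x_j,p_0) \in W^s(X^s_J)$, i.e.\ $x_j \in W^s(X^s_{p_0})$, and the construction terminates; or $(x_j,p_0) \in \partial W^s(X^s_J) \cap M_{p_0}$, in which case, because that set is closed and invariant, $\omega(x_j) \subset \partial W^s(X^s_J) \cap M_{p_0}$, and then Assumption~\ref{as:inf2} together with connectedness of $\omega$-limit sets (as in the remark following Assumption~\ref{as:inf2}) forces $\omega(x_j) = X^{j+1}_{p_0}$ for some $j+1 \in I$, so $x_j \in W^s(X^{j+1}_{p_0})$, and Assumption~\ref{as:trans2} makes this intersection transversal. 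Iterating produces a heteroclinic sequence of elements of $\{X^i_{p_0}\}_{i \in I}$ which, by the previous paragraph, must be finite; its last element $X^m_{p_0}$ therefore satisfies $W^u(X^m_{p_0}) \cap W^s(X^s_{p_0}) \neq \emptyset$, since otherwise the construction above would extend it.

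Finally I would propagate this intersection backward along the sequence: Lemma~\ref{lem:open}, applied with $W^s(X^s_{p_0})$ (which is open and invariant in $M_{p_0}$) in place of $W^s(X^s)$, shows that whenever $W^u(X^j_{p_0}) \cap W^s(X^s_{p_0}) \neq \emptyset$ and $W^u(X^{j-1}_{p_0}) \cap W^s(X^j_{p_0})$ is a transversal intersection, also $W^u(X^{j-1}_{p_0}) \cap W^s(X^s_{p_0}) \neq \emptyset$; recursing back to the first element $X^i_{p_0}$ gives the claim. I expect the only real difficulty to be bookkeeping rather than a new idea: the genuinely parameter-dependent obstacle, namely locating an intersection of $W^u(X^i_{p_0})$ with $\overline{W^s}(X^s_J)$ that actually lies inside $M_{p_0}$, has already been absorbed into Lemma~\ref{lem:bound2}, so what remains is to verify carefully that $\partial W^s(X^s_J) \cap M_{p_0}$ (which may be strictly larger than $\partial W^s(X^s_{p_0})$, as Example~\ref{ex:haus} shows) is the correct replacement for $\partial W^s(X^s)$ in each invoked lemma, and that none of those lemmas secretly uses a hypothesis about $\partial W^s(X^s)$ that fails to transfer.
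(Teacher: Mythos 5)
Your proposal is correct and follows exactly the route the paper takes: the paper's proof of Lemma~\ref{lem:int2} is literally the one-line statement that it is identical to the proof of Lemma~\ref{lem:int1} with Lemma~\ref{lem:bound2} substituted for Lemma~\ref{lem:bound1}. Your write-up just makes explicit the bookkeeping (closedness and invariance of $\partial W^s(X^s_J) \cap M_{p_0}$, transfer of the hypotheses to the fixed-parameter lemmas) that the paper leaves implicit.
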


\begin{proof}[Proof of Lemma~\ref{lem:int2}]
The proof is identical to the proof of Lemma~\ref{lem:int1},
substituting \\Lemma~\ref{lem:bound2} for Lemma~\ref{lem:bound1}.
\end{proof}

\begin{proof}[Proof of Theorem~\ref{thm:bound2}]

For any $p \in J$ and any $x \in \partial W^s(X^s_p)$,
there exist $x_n \in W^s(X^s_p)$ with $x_n \to x$.
Hence, $(x_n,p) \in W^s(X^s_J)$ with $(x_n,p) \to (x,p)$,
so $(x,p) \in \overline{W^s}(X^s_J)$ closed.
As $x \not\in W^s(X^s_p)$, this implies that
$(x,p) \in \partial W^s(X^s_J)$.
Hence,
\begin{align}
\sqcup_{p \in J} \partial W^s(X^s_p) \subset \partial W^s(X^s_J).
\label{eq:bound1}
\end{align}
We claim that for $J$ sufficiently small, for any $p \in J$ and $i \in I$,
$W^u(X^i_p) \cap W^s(X^s_p) \neq \emptyset$.
Let $i \in I$.  Then by Lemma~\ref{lem:int2}, we have that
$W^u(X^i_{p_0}) \cap W^s(X^s_{p_0}) \neq \emptyset$.
This implies that there exists $T > 0$ such that
$\phi_T(W^u_{\text{loc}}(X^i_{p_0})) \cap W^s_{\text{loc}}(X^s_{p_0})
\neq \emptyset$.
This intersection is trivially transverse since $W^s(X^s_{p_0})$ is an open set
in $M_{p_0}$.
Since $\{\phi_T(W^u_{\text{loc}}(X^i_p))\}_{p \in J}$ and
$\{W^s_{\text{loc}}(X^s_p)\}_{p \in J}$ are two $C^1$ continuous families over
$J$ of compact embedded submanifolds with boundary, and since they have
a point of transversal intersection at $p = p_0$, for $J$ sufficiently
small $p \in J$ implies \cite[Proposition A.3.16,Corollary A.3.18]{Ka99} that
$\phi_T(W^u_{\text{loc}}(X^i_p)) \cap W^s_{\text{loc}}(X^s_p) \neq \emptyset$.
Hence, for sufficiently small $J$ and
since $I$ is finite, $W^u(X^i_p) \cap W^s(X^s_p) \neq \emptyset$
for all $i \in I$ and $p \in J$, so the claim follows.
Let $(x,p) \in \partial W^s(X^s_J)$.
By Assumption~\ref{as:inf2}, $x \in W^s(X^i_p)$ for some $i \in I$. For this particular $i$, the claim implies that $W^u(X^i_p) \cap W^s(X^s_p) \neq \emptyset$.
Now the argument reduces to the fixed parameter case, and
we can use the proof of Theorem~\ref{thm:bound1} to show that
$W^s(X^i_p) \subset \partial W^s(X^s_p)$.
As $(x,p) \in \partial W^s(X^s_J)$ was arbitrary, we have
\begin{align}
\partial W^s(X^s_J) \subset 
\bigcup_{i \in I} W^s(X^i_J)
\subset \sqcup_{p \in J} \partial W^s(X^s_p).
\label{eq:bound2}
\end{align}
Then Eqs.~\ref{eq:bound1}-\ref{eq:bound2} imply the result.
\end{proof}



\begin{proof}[Proof of Corollary~\ref{cor:euc}]
Recall the definitions of $S^i_s$ and $F^i_s$ from the beginning
of Section~\ref{sec:thm21}.
Let $p_n \in J$ with $p_n \to p'$ for some $p' \in J$.
First, let $x \in \partial W^s(X^s_{p'})$.
Then $x \in W^s(X^i_{p'})$ for some $i \in I$.
So, there exists $T > 0$ such that $\phi_T(x,p') \in W^s_{\text{loc}}(X^i_{p'})$
and $y \in S^i_s$ such that $F^i_s(y,p') = \phi_T(x,p')$. Let $x_n = \phi_{-T}(F^i_s(y,p_n),p_n) \in W^s(X^i_{p_n})$ by invariance
of $W^s(X^i_{p_n})$.
Thus, $x_n \in W^s(X^i_{p_n}) \subset \partial W^s(X^s_{p_n})$ by
Theorem~\ref{thm:bound2}.
Furthermore, $x_n \to x$ since $\phi_{-T}$ and $F^i_s$ are $C^1$.
Hence, $x \in \liminf_{n \to \infty} \partial W^s(X^s_{p_n})$,
so $\partial W^s(X^s_{p'}) \subset
\liminf_{n \to \infty} \partial W^s(X^s_{p_n})$.

Next, let $x \in \limsup_{n \to \infty} \partial W^s(X^s_{p_n})$.
Then there exist a subsequence $\{p_{n_m}\}_{m=1}^\infty$ of
$\{p_n\}_{n=1}^\infty$ and a sequence $\{x_m\}_{m=1}^\infty$ such that
$x_m \in \partial W^s(X^s_{p_{n_m}})$ for all $m$ and $x_m \to x$.
By Theorem~\ref{thm:bound2},
$\partial W^s(X^s_{p_{n_m}}) \subset \partial W^s(X^s_J)$, so that
$(x_m,p_{n_m}) \in \partial W^s(X^s_J)$.
As $\partial W^s(X^s_J)$ is closed,
$\lim_{m \to \infty} (x_m,p_{n_m}) = (x,p') \in \partial W^s(X^s_J)$.
By Theorem~\ref{thm:bound2},
$\partial W^s(X^s_J) = \sqcup_{p \in J} \partial W^s(X^s_p)$,
so intersecting both sides with $M_{p'}$ implies that
$\partial W^s(X^s_J) \cap M_{p'} = \partial W^s(X^s_{p'}) \times \{p'\}$.
Hence, $(x,p') \in \partial W^s(X^s_J)$ implies that
$x \in \partial W^s(X^s_{p'})$.
Thus, $\limsup_{n \to \infty} \partial W^s(X^s_{p_n}) \subset
\partial W^s(X^s_{p'})$.
Together, these imply that $\lim_{n \to \infty} \partial W^s(X^s_{p_n}) = \partial W^s(X^s_{p'})$. As $p_n \to p'$ was arbitrary, this implies that $\{\partial W^s(X^s_p)\}_{p \in J}$ is a Chabauty
continuous family of subsets of $M$.
\end{proof}

\begin{proof}[Proof of Corollary~\ref{cor:comp}]
By Corollary~\ref{cor:euc}, we have that $\{\partial W^s(X^s_p)\}_{p\in J}$
is a Chabauty continuous family of subsets of $M$.
Since $M$ is compact, Hausdorff continuity is equivalent to
Chabauty continuity.
Hence, $\{\partial W^s(X^s_p)\}_{p\in J}$ is a Hausdorff continuous
family of subsets of~$M$.
\end{proof}

\begin{proof}[Proof of Corollary~\ref{cor:morse}]
Let $\Omega(V_{p_0})= \{X^i_{p_0}\}_{i=1}^n$ a finite union of hyperbolic 
critical elements since $V_{p_0}$ is Morse-Smale.
Palis showed \cite[Theorem~3.5]{Pa69}
that for any sufficiently small
$C^1$ perturbation to $V_{p_0}$, so for $J$ sufficiently small,
$p \in J$ implies that $V_p$ is still Morse-Smale with 
$\Omega(V_p) = \{X^i_p\}_{i=1}^n$.
Reorder the critical elements of $V_{p_0}$ if necessary such that $\{X^i_{p_0}\}_{i=1}^k = \Omega(V_{p_0}) \cap
\left(\partial W^s(X^s_J) \cap M_{p_0}\right)$, which is a finite union
of critical elements of $V_{p_0}$ since $\Omega(V_{p_0})$ is finite,
and $k \leq n$.
Note that $\{V_p\}_{p \in J}$ satisfies 
Assumption~\ref{as:hyp2} and Assumption~\ref{as:trans2} for $J$ sufficiently
small since $V_{p_0}$ is Morse-Smale.
Note that both $\bigcup_{i > k} X^i_{p_0}$ and
$\partial W^s(X^s_J) \cap M_{p_0}$ are compact, 
so since $M$ is a normal space there exists an open set $N$
such that $\partial W^s(X^s_J) \cap M_{p_0} \subset N$
and $N \cap \big( \bigcup_{i > k} X^i_{p_0} \big) = \emptyset$.
As $\Omega(V_{p_0}) = \bigcup_{i=1}^n X^i_{p_0}$, this implies that
$\Omega(V_{p_0}) \cap N = \bigcup_{i=1}^k X^i_{p_0} =
\Omega(V_{p_0}) \cap (\partial W^s(X^s_J) \cap M_{p_0})$.
Hence, Assumption~\ref{as:wand2} is satisfied.
So, it suffices to show that $\{V_p\}_{p \in J}$
satisfies Assumption~\ref{as:inf2} as well.

As in the proof of Theorem~\ref{thm:bound2}, $J$ sufficiently small implies that for every $i \in \{1,...,k\}$ and every $p \in J$, $W^u(X^i_p) \cap W^s(X^s_p) \neq \emptyset$. So, let $x \in W^u(X^i_p) \cap W^s(X^s_p)$.
As $x \in \overline{W^s}(X^s_p)$ closed and invariant,
the closure of the orbit of $x$ is contained in $\overline{W^s}(X^s_p)$.
Since $\alpha(x,p) = X^i_p$ is contained in the closure of the orbit of $x$,
$X^i_p \subset \overline{W^s}(X^s_p)$.
Since $X^i_p$ does not intersect $W^s(X^s_p)$,
$X^i_p\subset \partial W^s(X^s_p)$.
By Theorem~\ref{thm:bound2},
$\partial W^s(X^s_J) = \sqcup_{p \in J} \partial W^s(X^s_p)$,
so $X^i_p \subset \partial W^s(X^s_p)
\subset \partial W^s(X^s_J) \cap M_p$. Thus, for any $p \in J$, $\Omega(V_p) \cap \left(\partial W^s(X^s_J) \cap M_p\right) \supset \{X^i_p\}_{i=1}^k$.
Now, for any $i \in \{1,...,n\}$ and any $p \in J$, suppose that $X^i_p \subset \partial W^s(X^s_J)$. Then by Lemma~\ref{lem:int2},
$X^i_p \subset \partial W^s(X^s_J)$ implies that
$W^u(X^i_{p_0}) \cap W^s(X^s_{p_0}) \neq \emptyset$.
By the argument above, this implies that $X^i_{p_0} \subset \partial W^s(X^s_J) \cap M_{p_0}$.
Therefore, by definition of $k$ above, we must have
$i \in \{1,...,k\}$.
So, for any $p \in J$, as $\Omega(V_p) = \{X^i_p\}_{i=1}^n$, $\Omega(V_p) \cap \left(\partial W^s(X^s_J) \cap M_p\right) \subset \{X^i_p\}_{i=1}^k$. Combining this with the reverse inclusion above implies Assumption~\ref{as:inf2} is satisfied.
Thus, $\{V_p\}_{p \in J}$ satisfy
Assumptions~\ref{as:wand2}-\ref{as:trans2}.
Therefore, by Corollary~\ref{cor:comp},
$\{\partial W^s(X^s_p)\}_{p\in J}$ is a Hausdorff continuous
family of subsets of $M$.
\end{proof}

\section{Proof of Theorem~\ref{thm:time}}
\label{sec:thm22}

\begin{proof}[Proof of Theorem~\ref{thm:multi}]
First we show that $R$, $C$, and $\partial R$ are nonempty by connectedness
of any path from $p_1$ to $p_2$.
Then, we prove that every parameter value
$p^*$ in $\partial R$ is a boundary parameter value since we will see that
$y_{p^*} \in \partial W^s(X^s_J)$ which will imply, using
Theorem~\ref{thm:bound2}, that $p^* \in C$.
Next it is shown that $J_0$ is nonempty by noting that $C$ is closed in $\overline{J}$ compact, hence compact, and then arguing that there exists a point $\hat{p} \in C$ that achieves the minimum distance from $p_0$ to $C$, so that $d(p_0,\hat{p}) = d_S(p_0,C)$. 
Finally, we argue that
$J_0 = \{p \in \partial R:d(p_0,p) = d_S(p_0,\partial R)\}$
by choosing a minimal geodesic from $p_0$ to any fixed $p^* \in J_0$,
and arguing by connectedness that all points of the geodesic other than
$p^*$ must lie in $R$.

First we show that $R$, $C$, and $\partial R$ are nonempty.
Since $y_{p_1} \in W^s(X^s_{p_1})$, $p_1 \in R$ so $R$ is nonempty.
Let $\delta:[0,1] \to J$ be any continuous path in $J$ from $p_1$
to $p_2$, with $\delta(0) = p_1$ and $\delta(1) = p_2$.
Such a path exists because $J$ is a connected manifold, hence
pathwise connected.
As $y$ and $\delta$ are continuous and $[0,1]$ is connected,
$y_{\delta([0,1])}$ is connected.
Since $y_{\delta([0,1])}$ is connected and intersects
both $W^s(X^s_J)$ (at $y_{p_1}$) and $M \times J - W^s(X^s_J)$
(at $y_{p_2}$), it must intersect $\partial W^s(X^s_J)$.
Hence, there must exist $p^* \in \delta([0,1]) \subset J$
such that $y_{p^*} \in \partial W^s(X^s_J)$.
By Theorem~\ref{thm:bound2},
$\partial W^s(X^s_J) = \sqcup_{p \in J} \partial W^s(X^s_p)$.
Hence, $y_{p^*} \in \partial W^s(X^s_{p^*})$, so $p^* \in C$.
Thus, $C$ is nonempty.
As $C \cap R = \emptyset$, this implies that $\partial R$ is nonempty

Next we show that $\partial R \subset C$. Let $p^* \in \partial R$.
Then there exists a sequence $p_n \in R$ with $p_n \to p^*$.
Hence, by definition of $R$, $(y_{p_n},p_n) \in W^s(X^s_{p_n})$ for all $n$
with $(y_{p_n},p_n) \to (y_{p^*},p^*)$ since $y$ is $C^1$ and $p_n \to p^*$.
In particular, $(y_{p_n},p_n) \in W^s(X^s_{p_n}) \subset \overline{W^s}(X^s_J)$
for all $n$.
As $\overline{W^s}(X^s_J)$ is closed and $(y_{p_n},p_n) \to (y_{p^*},p^*)$,
this implies that $(y_{p^*},p^*) \in \overline{W^s}(X^s_J)$.
By Theorem~\ref{thm:bound2},
$\overline{W^s}(X^s_J) = \sqcup_{p \in J} \overline{W^s}(X^s_p)$.
Hence, $y_{p^*} \in \overline{W^s}(X^s_{p^*})$.
First assume towards a contradiction that $y_{p^*} \in W^s(X^s_{p^*})$.
Let $U$ be an open neighborhood of $X^s_{p^*}$ such that
$\overline{U} \subset \text{int }W^s_{\text{loc}}(X^s_{p^*})$.
Then there exists $T > 0$ such that $\phi_T(y_{p^*}) \in U$.
As $\text{int }W^s_{\text{loc}}(X^s_p)$ varies $C^1$ with parameter $p$,
there exists an open neighborhood $J'$ of $p^*$ in $J$ such that $p \in J'$
implies that $U \subset \text{int }W^s_{\text{loc}}(X^s_p)$.
As $U$ is open in $M$ and both $\phi_T$ and $y$ are $C^1$, shrinking
$J'$ if necessary implies that for $p \in J'$,
$\phi_T(y_p) \in U \subset \text{int }W^s_{\text{loc}}(X^s_p)$.
Hence, $J'$ is an open neighborhood of $p^*$ in $J$ such that
$J' \subset R$.
But this contradicts $p^* \in \partial R$.
So, since $y_{p^*} \in \overline{W^s}(X^s_{p^*})$
but $y_{p^*} \not\in W^s(X^s_{p^*})$, we must have $y_{p^*} \in \partial W^s(X^s_{p^*})$.
Hence, $p^* \in C$.

Fix $p_0 \in R$ and let $J_0$ be the set of boundary parameter
values $p^* \in C$ such that $J_0 = \{p^* \in C:d(p_0,p^*) = d_S(p_0,C)\}$.
We begin by showing that $J_0$ is nonempty. By Theorem~\ref{thm:bound2}, shrinking $J$ if necessary implies that $\sqcup_{p \in \overline{J}} \partial W^s(X^s_p) = \partial W^s(X^s_{\overline{J}})$. Thus, $y^{-1}\left(\partial W^s(X^s_{\overline{J}})\right) = \sqcup_{p \in \overline{J}} y_p^{-1}\left(\partial W^s(X^s_p)\right) = C$. As $y$ is continuous and $\partial W^s(X^s_{\overline{J}})$ is closed in $M \times \overline{J}$, $C$ is closed in $\overline{J}$. Since $J$ was chosen in Section~\ref{sec:time} such that $\overline{J}$ is compact, and $C$ is closed in $\overline{J}$, it follows that $C$ is compact.  Thus, since $C$ is compact and nonempty by the previous paragraph, and since $p_0$ is a point, there exists $p^* \in C$ such that $d(p_0,p^*) = d_S(p_0,C)$.  So, $p^* \in J_0$ which implies that $J_0$ is nonempty.

Finally, we show that
$J_0 = \{p \in \partial R:d(p_0,p) = d_S(p_0,\partial R)\}$.
Let $p^* \in J_0$. As $\overline{J}$ is convex, there exists $\gamma:[0,1] \to \overline{J}$, a minimal geodesic from $p_0$ to $p^*$, with $\gamma(0) = p_0$ and $\gamma(1) = p^*$, and the length of $\gamma$
is equal to $d(p_0,p^*)$.\footnote{For example,
if $J$ was a convex subset of Euclidean space then the image of
$\gamma$ would be the straight line segment between $p_0$ and $p^*$.}
For every $x \in [0,1)$, by definition of a minimal geodesic,
$d(p_0,\gamma(x)) < d(p_0,\gamma(1)) = d(p_0,p^*) = d_S(p_0,C)$,
where the last equality follows since $p^* \in J_0$.
This implies that for every $x \in [0,1)$, $\gamma(x) \not\in C$,
since otherwise we would have
$d_S(p_0,C) \leq d(p_0,\gamma(x)) < d(p_0,p^*)$, which would contradict
that $p^* \in J_0$ (so $d_S(p_0,C) = d(p_0,p^*)$).
Hence, $\gamma([0,1)) \cap C = \emptyset$.
Furthermore, since $[0,1)$ is connected and both $\gamma$ and $y$ are
continuous, $y_{\gamma([0,1))}$ is connected.
Assume towards a contradiction that there exists $x \in [0,1)$ such that
$y_{\gamma(x)} \not\in W^s(X^s_{\gamma(x)})$.
As $y_{p_0} \in W^s(X^s_J)$, $y_{\gamma(x)} \not\in W^s(X^s_J)$,
and $y_{p_0}, y_{\gamma(x)} \in y_{\gamma([0,1))}$ connected,
we must have $y_{\gamma([0,1))} \cap \partial W^s(X^s_J) \neq \emptyset$.  
So, there exists $x' \in [0,1)$ such that
$y_{\gamma(x')} \in \partial W^s(X^s_J)$.
By Theorem~\ref{thm:bound2},
$\partial W^s(X^s_J) = \sqcup_{p \in J} \partial W^s(X^s_p)$.
In particular, $y_{\gamma(x')} \in \partial W^s(X^s_{\gamma(x')})$.
But this implies $\gamma(x') \in C$, which contradicts $\gamma([0,1)) \cap C = \emptyset$.
So, we must have $y_{\gamma(x)} \in W^s(X^s_{\gamma(x)})$ for
all $x \in [0,1)$.
Hence, $\gamma([0,1)) \subset R$.
Let $p_n = \gamma\left(1 - \f{1}{n}\right)$.
Then $p_n \in R$ with $p_n \to p^*$, so $p^* \in \partial R$.
Because $\partial R \subset C$ as shown above,
$d_S(p_0,\partial R) \geq d_S(p_0,C) = d(p_0,p^*)$.
As $p^* \in \partial R$, it follows that $d_S(p_0,\partial R) \leq d(p_0,p^*)$.
Hence, combining these inequalities we have
$d_S(p_0,\partial R) = d(p_0,p^*)$.
As $p^* \in J_0$ was arbitrary, this implies
$J_0 = \{p^* \in \partial R:d(p_0,p^*) = d_S(p_0,\partial R)\}$.
\end{proof}

\begin{proof}[Proof of Corollary~\ref{cor:multi}]
Fix $p^* \in J_0$.
As $J_0 \subset C$, $p^* \in C$.
Hence, by definition of $C$, $y_{p^*} \in \partial W^s(X^s_{p^*})$.
By Theorem~\ref{thm:bound2},
$\partial W^s(X^s_{p^*}) = \bigcup_{i \in I} W^s(X^i_{p^*})$.
Thus, $y_{p^*} \in \partial W^s(X^s_{p^*})$ implies there exists a unique
$j \in I$ such that $y_{p^*} \in W^s(X^j_{p^*})$.
Let $X^*_J = X^j_J$ be the controlling critical element.
Then $y_{p^*} \in W^s(X^*_{p^*})$.
\end{proof}

\begin{lemma}\label{lem:defined}
Let $\gamma:[0,1] \to J$ be a path that satisfies Assumption~\ref{as:neigh2}
with embedded submanifold $N$.
For $p \in \gamma([0,1])$ let $T_p \subset [0,\infty)$ denote
the set of times $\{t \in [0,\infty):\phi_t(y_p) \in N\}$.
Then for $p \in \gamma([0,1))$, $T_p$ consists of a finite union of
closed intervals, so $\tau_N(p)$ is well-defined and finite.
For $p = \gamma(1)$, $T_p$ consists of a finite union of closed intervals
together with an interval of the form $[t',\infty)$ for some $t' > 0$,
so $\tau_N(p) = \infty$ is well-defined.
\end{lemma}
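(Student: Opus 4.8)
The plan is to read off the structure of $T_p$ from the way the orbit $t\mapsto\phi_t(y_p)$ meets $\partial N$, combined with the asymptotics of the orbit. Throughout, for $p\in\gamma([0,1])$ write $\psi_p(t)=\phi_t(y_p)$. Since $N$ is a compact codimension-zero smooth embedded submanifold with boundary, $\partial N$ is a compact codimension-one embedded submanifold of $M$, the sets $\text{int }N$ and $M\setminus N$ are open, $M=\text{int }N\ \sqcup\ \partial N\ \sqcup\ (M\setminus N)$, and $T_p=\psi_p^{-1}(N)$ is closed in $[0,\infty)$ because $N$ is closed and $\psi_p$ is continuous. Note also that $0\notin\psi_p^{-1}(\partial N)$, since $y_p$ is disjoint from $N$ by Assumption~\ref{as:neigh2}.

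First I would prove that the crossing set $Z_p:=\{t\in[0,\infty):\psi_p(t)\in\partial N\}$ is a closed, discrete subset of $[0,\infty)$, and that at each of its points the orbit passes from $\text{int }N$ to $M\setminus N$ or vice versa. Fix $t_0\in Z_p$, put $x_0=\psi_p(t_0)\in\partial N$, choose a boundary chart for $N$ at $x_0$, and let $g$ be the corresponding smooth local defining function, so that near $x_0$ one has $g^{-1}(0)=\partial N$, $g>0$ on $\text{int }N$, $g<0$ off $N$, and $\ker dg_{x_0}=T_{x_0}\partial N$. By Assumption~\ref{as:neigh2} the orbit of $y_p$ is transverse to $\partial N$, so its velocity $\dot\psi_p(t_0)=V_p(x_0)$ does not lie in $T_{x_0}\partial N$; hence $\frac{d}{dt}\big|_{t_0}\,g(\psi_p(t))=dg_{x_0}(V_p(x_0))\neq0$. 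Thus the $C^1$ function $t\mapsto g(\psi_p(t))$ is strictly monotone near $t_0$, so $t_0$ is an isolated, sign-changing zero of it; this gives both the isolation of $t_0$ in $Z_p$ and the side-switching property. Since $Z_p$ is moreover closed, it is closed and discrete.

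Next I would control the orbit at large times. If $p\in\gamma([0,1))$ then $\gamma([0,1))\subset R$ gives $y_p\in W^s(X^s_p)$, so $\psi_p(t)\to X^s_p$ as $t\to\infty$; since $X^s_p$ is disjoint from the closed set $N$, there is $T>0$ with $\psi_p(t)\in M\setminus N$ for all $t\ge T$, so $T_p\subset[0,T)$. If $p=\gamma(1)$ then $\gamma(1)\in C$, so by Theorem~\ref{thm:bound2} (as in the proof of Corollary~\ref{cor:multi}) there is a controlling critical element $X^*_{\gamma(1)}$ with $y_{\gamma(1)}\in W^s(X^*_{\gamma(1)})$; by Assumption~\ref{as:neigh2} the compact critical element $X^*_{\gamma(1)}$ lies in the open set $\text{int }N$, hence has positive distance to the closed set $M\setminus\text{int }N$, and since $\psi_{\gamma(1)}(t)$ converges to $X^*_{\gamma(1)}$ there is $t'>0$ with $\psi_{\gamma(1)}(t)\in\text{int }N$ for all $t\ge t'$, so $[t',\infty)\subset T_p$ and $T_p\cap(t',\infty)=(t',\infty)$.

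Finally I would assemble these facts. Let $T$ denote the bound found above in the first case and $t'$ in the second; in either case $\psi_p(0)\notin N$ and $\psi_p(T)\notin\partial N$, so $0,T\notin Z_p$. Then $Z_p\cap[0,T]$ is a closed discrete subset of the compact interval $[0,T]$, hence finite, say $t_1<\dots<t_k$ (possibly $k=0$). On each connected component of $[0,T]\setminus\{t_1,\dots,t_k\}$ the orbit avoids $\partial N$, so it stays in the open set $\text{int }N\ \sqcup\ (M\setminus N)$ and, being connected, in just one of the two disjoint pieces; hence $\mathbb{1}_N\circ\psi_p$ is constant on each component (the constants alternating across consecutive $t_j$ by the side-switching property). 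Therefore $T_p\cap[0,T]$ is a finite union of nondegenerate closed subintervals of $[0,T]$ with endpoints drawn from $\{t_1,\dots,t_k\}$. In the first case $T_p=T_p\cap[0,T]$, so $T_p$ is a finite union of closed intervals and $\tau_N(p)=\int_0^T\mathbb{1}_N(\psi_p(t))\,dt$ is finite; in the second case, adjoining $[t',\infty)$ (and, if it abuts the last subinterval, replacing that pair by a single interval $[t'',\infty)$ with $t''>0$) exhibits $T_p$ as a finite union of closed intervals together with an interval of the form $[t',\infty)$, and $\tau_N(\gamma(1))\ge\int_{t'}^\infty 1\,dt=\infty$, so $\tau_N(\gamma(1))=\infty$ is well-defined. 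I expect the main obstacle to be the transversality-to-isolation step: one must convert the purely geometric transversality hypothesis of Assumption~\ref{as:neigh2} into the analytic assertion that $g\circ\psi_p$ has only isolated, sign-changing zeros, which is precisely what prevents the orbit from grazing or oscillating against $\partial N$ infinitely often within bounded time.
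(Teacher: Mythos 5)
Your proof is correct, and while it follows the same overall skeleton as the paper's --- finitely many transversal crossings of $\partial N$ on a compact time interval, plus control of the tail of the orbit via convergence to $X^s_p$ (disjoint from $N$) or to $X^*_{\gamma(1)}$ (inside $\operatorname{int} N$) --- the technical core is implemented differently. The paper first proves that the orbit segment $\phi_{(0,T)}(y_p)$ is an \emph{embedded} one-dimensional submanifold (this requires arguing that the forward orbit contains no critical element, so that $\phi(\cdot,y_p,p)$ is an injective immersion and a homeomorphism onto its compact image), and then invokes transversality theory for manifolds with boundary: $\phi_{(0,T)}(y_p)\cap N$ is a compact one-manifold with boundary $\phi_{(0,T)}(y_p)\cap\partial N$, a compact zero-manifold, hence finite, and compact one-manifolds are finite unions of closed intervals. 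You instead work entirely in the time domain with a local defining function $g$ for $\partial N$ and observe that transversality makes $t\mapsto g(\phi_t(y_p))$ have nonzero derivative, hence isolated sign-changing zeros, at every crossing time; closedness plus discreteness on $[0,T]$ then gives finiteness, and constancy of $\mathbb{1}_N\circ\phi_\cdot(y_p)$ on the complementary components assembles $T_p$. Your route buys a real simplification: it never needs the orbit to be injective or embedded, so the paper's preliminary embedding argument is bypassed, at the cost of introducing the (standard) boundary-chart defining function. Both arguments handle the endpoint bookkeeping ($0,T\notin Z_p$, alternation of sides, merging the last bounded interval with $[t',\infty)$ in the $p=\gamma(1)$ case) equivalently, and your treatment of the tail behavior matches the paper's.
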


\begin{proof}[Proof of Lemma~\ref{lem:defined}]
First we show that the forward orbit of $y_p$ under $V_p$ is a one-dimen\-sional
$C^1$ embedded submanifold. This will imply, since this orbit is transverse to $\partial N$ and $N$, that its intersection with $N$ is a one-dimensional $C^1$ embedded submanifold
with boundary equal to its intersection with $\partial N$, which is a
zero-dimensional $C^1$ embedded submanifold.
By compactness, this manifold boundary consists of a finite number of points.
Then the formulas for $T_p$ are obtained
by considering the connected components of a one-dimensional manifold
in $[0,\infty)$.

First we show that for any $p \in \gamma([0,1])$ and $T>0$ sufficiently large such that $\phi_t(y_p) \notin \partial N$ for all $t \geq T$, then
$\phi(\cdot,y_p,p)^{-1}\left(\phi_{(0,T)}(y_p) \cap N\right)$ is
a one-dimensional embedded submanifold. The boundary of this submanifold is equal to
$\phi(\cdot,y_p,p)^{-1}\left(\phi_{(0,T)}(y_p) \cap \partial N\right)$,
which consists of a finite union of points.
So, let $p \in \gamma([0,1])$.
If $p = \gamma(1)$ then let $X_J = X^*_J$ where $X^*_J$ is the controlling
critical element corresponding to $\gamma(1)$ as in Corollary~\ref{cor:multi},
and choose $W^s_{\text{loc}}(X^*_{\gamma(1)})$ sufficiently small so that it is
contained in $N$.
Otherwise, let $X_J = X^s_J$ and choose $W^s_{\text{loc}}(X^s_J)$ sufficiently
small so that it is disjoint from $N$.
Then the orbit of $y_p$ under $V_p$ converges to $X_p$, so there exists $T > 0$
such that $\phi_T(y_p) \in \text{int }W^s_{\text{loc}}(X_p)$.
Hence, by definition of the local stable manifold, $t \geq T$ implies that
$\phi_t(y_p) \in \text{int }W^s_{\text{loc}}(X_p)$.
As $y_p \in W^s(X_p)$ but $y_p \not\in X_p$, the forward orbit of $y_p$
under $V_p$ does not contain any
critical elements, so $\phi(\cdot,y_p,p)$ is an injective $C^1$ immersion
from $[0,\infty)$ into $M$.
As $\phi(\cdot,y_p,p)$ is a continuous bijection onto its image,
$[0,T]$ is compact, and $M$ is Hausdorff,
$\phi(\cdot,y_p,p)$ is a homeomorphism from $[0,T]$ onto $\phi_{[0,T]}(y_p)$.
Hence, $\phi(\cdot,y_p,p)$ is a $C^1$ embedding from $[0,T]$ onto its image,
so $\phi_{(0,T)}(y_p)$ is a $C^1$ embedded submanifold in $M$
and $\phi(\cdot,y_p,p)$ is a $C^1$ diffeomorphism from $(0,T)$ onto
$\phi_{(0,T)}(y_p)$.

By Assumption~\ref{as:neigh2}, $\phi_{(0,T)}(y_p)$ is transverse to
$\partial N$, and it is trivially transverse to the interior of $N$ since the
dimension of $N$ is equal to the dimension of $M$.
Therefore, $\phi_{(0,T)}(y_p) \cap N$ is a one dimensional $C^1$ embedded
submanifold with boundary equal to $\phi_{(0,T)}(y_p) \cap \partial N$
a zero dimensional $C^1$ embedded submanifold.
Since $y_p, \phi_T(y_p) \not\in \partial N$,
$\phi_{(0,T)}(y_p) \cap \partial N = \phi_{[0,T]}(y_p) \cap \partial N$.
Furthermore, $N$ compact and $\phi_{[0,T]}(y_p)$ compact implies that
their intersection $\phi_{[0,T]}(y_p) \cap N$ is compact.
Therefore $\phi_{(0,T)}(y_p) \cap \partial N$ is a compact
zero dimensional \linebreak embedded submanifold.
As zero dimensional manifolds are discrete, this implies that \linebreak
$\phi_{(0,T)}(y_p) \cap \partial N$ consists of a finite union of points.
As $\phi(\cdot,y_p,p)$ is a $C^1$ diffeomorphism from $(0,T)$ onto
$\phi_{(0,T)}(y_p)$, it follows that
$\phi(\cdot,y_p,p)^{-1}\left(\phi_{(0,T)}(y_p) \cap N\right)$ is
a one-dimensional embedded submanifold with boundary equal to
$\phi(\cdot,y_p,p)^{-1}\left(\phi_{(0,T)}(y_p) \cap \partial N\right)$,
which consists of a finite union of points.

Next, we show that $\tau_N$ is well-defined and finite for
$p \in \gamma([0,1))$. Suppose $p \in \gamma([0,1))$.
Then $y_p, \phi_T(y_p) \not\in N$, so
$\phi_{(0,T)}(y_p) \cap N = \phi_{[0,T]}(y_p) \cap N$
is compact as $\phi_{[0,T]}(y_p)$ and $N$ are compact.
Thus, as $\phi(\cdot,y_p,p)$ is a $C^1$ diffeomorphism from $(0,T)$ onto
$\phi_{(0,T)}(y_p)$, this implies that
$\phi(\cdot,y_p,p)^{-1}\left(\phi_{(0,T)}(y_p) \cap N\right)$ is a compact
one-dimensional embedded submanifold in $(0,T)$ with boundary consisting
of a finite number of points.
Since it is a compact one-dimensional manifold, it has finitely many
connected components and each contains its manifold boundary.
Hence, $\phi(\cdot,y_p,p)^{-1}\left(\phi_{(0,T)}(y_p) \cap N\right)$
consists of a finite union of closed intervals.
As $\phi_t(y_p) \not\in N$ for all $t \geq T$, this implies that
$T_p = \{t \in [0,\infty): \phi_t(y_p) \in N\}$ consists of this finite
union of closed intervals.
So, $\tau_N(p) = \lambda(T_p)$, where $\lambda$ is the Lebesgue measure,
is well-defined and is equal to the sum of the lengths of all such intervals. This summation is finite since the intervals are contained in $[0,T]$ which has
finite length $T$.

Finally, we show that $\tau_N(\gamma(1)) = \infty$. Let $p = \gamma(1)$.
For $t \geq T$,
$\phi_T(y_p) \in \text{int }W^s_{\text{loc}}(X^*_{\gamma(1)}) \subset \text{int }N$,
so $\phi_t(y_p) \in \text{int } N$ for all $t \geq T$.
In particular, $\phi_t(y_p) \not\in \partial N$ for all $t \geq T$.
As $\phi(\cdot,y_p,p)^{-1}\left(\phi_{(0,T)}(y_p) \cap \partial N\right)$
consists of a finite number of points, let $t'$ be the largest value in
this set.
Then $t'$ represents the final intersection of the forward orbit of $y_p$
under $V_p$ with $\partial N$ since, by the above reasoning, no further intersections
occur for $t \geq T$.
We claim that for all $t \in [t',T]$, $\phi_t(y_p) \in N$.
Assume towards a contradiction that the claim is false.
Then there exists $\hat{t} \in (t',T)$ with $\phi_{\hat{t}}(y_p) \not\in N$.
As $\phi_{[\hat{t},T]}(y_p)$ is connected with $\phi_{\hat{t}}(y_p) \not\in N$,
$\phi_T(y_p) \in N$, and $N$ connected, there must exist
$t'' \in [\hat{t},T)$ such that $\phi_{t''}(y_p) \in \partial N$.
But, $t'' > t'$ with $\phi_{t''}(y_p) \in \partial N$,
so this contradicts that $t'$ was the final intersection of the forward orbit
of $y_p$ under $V_p$ with $\partial N$.
Hence, $[t',\infty) \subset T_p$.
As $t'$ is a manifold boundary point for   
$\phi(\cdot,y_p,p)^{-1}\left(\phi_{(0,T)}(y_p) \cap N\right)$,
there exists $\epsilon > 0$ such that
$[t'-\epsilon,t') \cap T_p = \emptyset$.
Hence, $\phi_{(0,t')}(y_p) \cap N = \phi_{[0,t'-\epsilon]}(y_p) \cap N$
is an intersection of two compact sets, hence compact.
Thus, $\phi(\cdot,y_p,p)^{-1}\left(\phi_{(0,t')}(y_p) \cap N\right)$
is a compact one-dimensional embedded submanifold in $(0,t')$ with
boundary consisting of a finite number of points.
Hence, $\phi(\cdot,y_p,p)^{-1}\left(\phi_{(0,t')}(y_p) \cap N\right)$
is a finite union of closed intervals.
Therefore, $T_p$ is the union of $[t',\infty)$ with a finite union of
closed intervals.
So, $\tau_N(p)$ is well-defined with $\tau_N(p) = \infty$.  
\end{proof}

\begin{lemma}\label{lem:limit}
Let $\gamma:[0,1] \to J$ be a path that satisfies Assumption~\ref{as:neigh2}
with embedded submanifold $N$.
Then $\lim\limits_{\substack{p \to \gamma(1) \\ p \in \gamma([0,1])}} \tau_N(p) = \infty$.
\end{lemma}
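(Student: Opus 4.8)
The plan is to reduce the claim to a statement about a single long but finite time window. Fix $L > 0$; I will produce an open neighborhood of $\gamma(1)$ in $J$ on which $\tau_N \geq L$, and since $L$ is arbitrary and $\tau_N$ is well defined on all of $\gamma([0,1])$ by Lemma~\ref{lem:defined} (with $\tau_N(\gamma(1)) = \infty$), this yields the claimed limit. The driving observation is the one already used in the proof of Lemma~\ref{lem:defined}: writing $X^*_{\gamma(1)}$ for the controlling critical element corresponding to $\gamma(1)$ (Corollary~\ref{cor:multi}), we have $y_{\gamma(1)} \in W^s(X^*_{\gamma(1)})$, and since $X^*_{\gamma(1)} \subset \text{int } N$ by Assumption~\ref{as:neigh2} we may choose $W^s_{\text{loc}}(X^*_{\gamma(1)}) \subset \text{int } N$. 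Because the forward orbit of $y_{\gamma(1)}$ converges to $X^*_{\gamma(1)}$ and $W^s_{\text{loc}}(X^*_{\gamma(1)})$ is forward invariant, there is $T_0 \geq 0$ with $\phi_t(y_{\gamma(1)}) \in W^s_{\text{loc}}(X^*_{\gamma(1)}) \subset \text{int } N$ for every $t \geq T_0$.

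With $T_0$ in hand I would set $T_1 = T_0 + L$, so that $\{\phi(t,y_{\gamma(1)},\gamma(1)) : t \in [T_0,T_1]\}$ is a compact subset of the open set $\text{int } N$. Since the flow $\phi$ and the initial-condition map $y$ are $C^1$, the map $(t,p) \mapsto \phi(t,y_p,p)$ is continuous, and a tube-lemma argument over the compact interval $[T_0,T_1]$ --- for each $s \in [T_0,T_1]$ pick a product neighborhood of $(s,\gamma(1))$ whose image lies in $\text{int } N$, extract a finite subcover of $[T_0,T_1]$, and intersect the corresponding parameter neighborhoods --- produces an open neighborhood $V$ of $\gamma(1)$ in $J$ with $\phi(t,y_p,p) \in \text{int } N \subset N$ for all $t \in [T_0,T_1]$ and all $p \in V$. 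Then for every $p \in V \cap \gamma([0,1])$ we get $\tau_N(p) = \int_0^\infty \mathbb{1}_N(\phi(t,y_p,p))\,dt \geq \int_{T_0}^{T_1} 1\, dt = T_1 - T_0 = L$, while $\tau_N(\gamma(1)) = \infty \geq L$; this proves $\tau_N \geq L$ on a neighborhood of $\gamma(1)$, as required.

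I expect the only genuinely delicate point to be the discontinuity of $\mathbb{1}_N$: one cannot pass a limit through the integral defining $\tau_N$ directly, since near $\partial N$ the orbit of $y_p$ may straddle $\partial N$ on the side opposite to the orbit of $y_{\gamma(1)}$, which no $C^0$ closeness of the flows controls. The plan sidesteps this by never working near $\partial N$: it retains only the stretch of time on which the limiting orbit lies in $W^s_{\text{loc}}(X^*_{\gamma(1)}) \subset \text{int } N$, a compact set disjoint from $\partial N$, so that a sufficiently small tube about that orbit segment remains in $\text{int } N$ and plain continuity of the flow is enough. The remaining ingredients --- the choice of $T_0$ from convergence to the critical element, the tube lemma, and well-definedness of $\tau_N$ --- are routine and inherited from Lemma~\ref{lem:defined} and standard compactness.
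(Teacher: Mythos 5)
Your argument is correct and follows essentially the same route as the paper's: both isolate a compact segment of the limiting orbit of length $K$ (your $L$) lying in $\text{int }N$ after the orbit has entered a local stable manifold of the controlling critical element contained in $\text{int }N$, and both use continuity of $(t,p)\mapsto\phi(t,y_p,p)$ over that compact time interval (your tube lemma versus the paper's normality-plus-$C^1$-continuity step) to keep the nearby orbits' segments inside $\text{int }N$, giving $\tau_N(p)\geq L$ near $\gamma(1)$. The only cosmetic difference is that you bound $\tau_N(p)$ directly by the integral over $[T_0,T_1]$ rather than via the finite-union-of-closed-intervals structure of $T_p$ from Lemma~\ref{lem:defined}; both are valid.
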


\begin{proof}[Proof of Lemma~\ref{lem:limit}]
By Lemma~\ref{lem:defined}, $\tau_N(\gamma(1)) = \infty$.
By the proof of Lemma~\ref{lem:defined}, there exists a final
time $t' \in [0,\infty)$ such that $\phi_{t'}(y_{\gamma(1)}) \in \partial N$, and
$t > t'$ implies that $\phi_t(y_{\gamma(1)}) \in \text{int }N$.
Choose any $\epsilon, K > 0$.
Then $\phi_{[t'+\epsilon,t'+\epsilon+K]}(y_{\gamma(1)}) \subset \text{int }N$.
As $\partial N$ and $\phi_{[t'+\epsilon,t'+\epsilon+K]}(y_{\gamma(1)})$ are compact and
disjoint in $M$ a normal space, there exists an open neighborhood
$U$ in $M$ such that
$\phi_{[t'+\epsilon,t'+\epsilon+K]}(y_{\gamma(1)}) \subset U \subset \text{int }N$.
As $U$ is open in $M$, and $\phi([t'+\epsilon,t'+\epsilon+K],y_p,p)$ is
compact and $C^1$ continuous with respect to $p$, then for $\delta > 0$
sufficiently small, $p \in \gamma((1-\delta,1))$ implies that
$\phi_{[t'+\epsilon,t'+\epsilon+K]}(y_p) \subset U \subset \text{int }N$.
So, for any $p \in \gamma((1-\delta,1))$, $[t'+\epsilon,t'+\epsilon+K] \subset T_p$.
By the proof of Lemma~\ref{lem:defined}, $p \in \gamma((1-\delta,1))$ implies that $T_p$ consists of a finite union closed intervals, and $\tau_N(p)$ is equal to the sum of the lengths of these intervals. Hence, $\tau_N(p)$ is at least as large as the length of the closed interval
that contains $[t'+\epsilon,t'+\epsilon+K]$, which is at least length $K$.
As $\tau_N(p) \geq K$ for all $p \in \gamma((1-\delta,1))$, $\tau_N(\gamma(1)) = \infty$, and $K > 0$ was
arbitrary, $\lim\limits_{\substack{p \in \gamma([0,1]) \\ p \to \gamma(1)}} \tau_N(p) = \infty$.
\end{proof}

\begin{figure}
\centering
\includegraphics[width=0.45\textwidth]{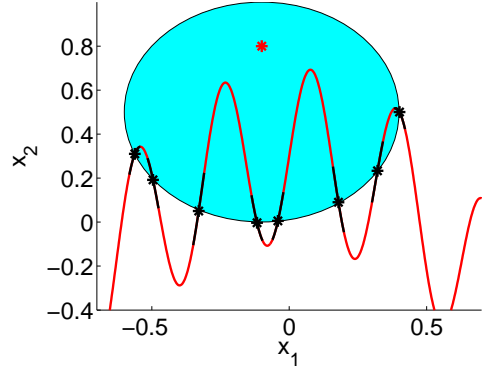}
\caption{For a fixed parameter value $p \in \gamma([0,1))$, the figure shows the intersection of the orbit of $y_p$ (red and black line segments) with the embedded
submanifold with boundary $N$ (cyan ellipse) containing an equilibrium point $X^*_p$ (red star). There are a finite number of intersections of the orbit of $y_p$
with $\partial N$ (black stars).
The orbit is a union of line segments of the form $\phi(T_i,y_p,p)$ (black),
which contain the intersection points, and line segments of the form
$\phi(T_i',y_p,p)$ (red), which are compact, contain no intersection points,
and intersect the black line segments on each end (although this
intersection is not visible in the figure).
This figure originally appeared in \cite{Fi17}.}
\label{fig:varies}
\end{figure}

\begin{lemma}\label{lem:cont}
Let $\gamma:[0,1] \to J$ be a path that satisfies Assumption~\ref{as:neigh2}
with embedded submanifold $N$.
Then $\tau_N$ is continuous over $\gamma([0,1))$
\end{lemma}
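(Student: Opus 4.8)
The plan is to reduce, exactly as in the proof of Lemma~\ref{lem:defined}, to a fixed finite time horizon and then invoke the dominated convergence theorem. Fix $s_* \in [0,1)$ and set $p_* = \gamma(s_*)$. Since $\gamma$ is continuous and $\gamma([0,1))$ is first countable, it suffices to show that $\tau_N(\gamma(s_n)) \to \tau_N(p_*)$ for every sequence $s_n \to s_*$ in $[0,1)$; write $p_n = \gamma(s_n)$.

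The first step is to produce a time horizon $T$ that works uniformly near $p_*$. Since $p_* \in \gamma([0,1)) \subset R$ by Assumption~\ref{as:neigh2}, the forward orbit of $y_{p_*}$ under $V_{p_*}$ converges to $X^s_{p_*}$, so there is $T > 0$ with $\phi_T(y_{p_*}) \in \text{int }W^s_{\text{loc}}(X^s_{p_*})$, where $W^s_{\text{loc}}(X^s_J)$ is chosen small enough to be disjoint from $N$ along $\gamma([0,1])$, exactly as in Lemma~\ref{lem:defined}. Fix an open set $U$ in $M$ with $\phi_T(y_{p_*}) \in U \subset \text{int }W^s_{\text{loc}}(X^s_{p_*})$. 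Because $W^s_{\text{loc}}(X^s_p)$ varies $C^1$ with $p$ while $\phi_T$ and $y$ are $C^1$, there is a neighborhood $J'$ of $p_*$ in $J$ such that $p \in J' \cap \gamma([0,1])$ implies $\phi_T(y_p) \in U \subset \text{int }W^s_{\text{loc}}(X^s_p)$; by invariance of the local stable manifold, $\phi_t(y_p) \in W^s_{\text{loc}}(X^s_p)$ for all $t \geq T$, hence $\phi_t(y_p) \notin N$ for all $t \geq T$. Thus $\tau_N(p) = \int_0^T \mathbb{1}_N(\phi(t,y_p,p))\,dt$ for $p \in J' \cap \gamma([0,1])$, with the integrand dominated by the integrable function $\mathbb{1}_{[0,T]}$, and $p_n \in J'$ for $n$ large.

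The second step is pointwise a.e. convergence of the integrands. For each fixed $t$, $\phi(t,y_{p_n},p_n) \to \phi(t,y_{p_*},p_*)$ by continuity of the flow $\phi$ (equivalently, of $\phi_t$) and of $y$. If $\phi(t,y_{p_*},p_*) \in \text{int }N$, then $\phi(t,y_{p_n},p_n) \in \text{int }N$ for large $n$, so both indicator values equal $1$ eventually; if $\phi(t,y_{p_*},p_*) \in M - N$, then since $N$ is closed $\phi(t,y_{p_n},p_n) \notin N$ for large $n$, so both equal $0$ eventually. The only exceptional times $t$ are those with $\phi(t,y_{p_*},p_*) \in \partial N$, and by the proof of Lemma~\ref{lem:defined} the orbit of $y_{p_*}$ meets $\partial N$ in only finitely many points (it is transverse to $\partial N$), so these $t$ form a null set. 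Hence $\mathbb{1}_N(\phi(t,y_{p_n},p_n)) \to \mathbb{1}_N(\phi(t,y_{p_*},p_*))$ for a.e. $t \in [0,T]$, and the dominated convergence theorem gives $\tau_N(p_n) \to \tau_N(p_*)$. Since $s_* \in [0,1)$ was arbitrary, $\tau_N$ is continuous on $\gamma([0,1))$.

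I expect the main obstacle to be the uniform choice of $T$ in the first step: a priori the time an orbit takes to reach the local stable manifold of $X^s_p$ could grow without bound as $p$ varies, and this is precisely the phenomenon that fails at $p = \gamma(1)$. It is ruled out here because $p_* \in R$, so $y_{p_*}$ lies in the open set $W^s(X^s_{p_*})$ and its orbit enters a fixed open subset $U$ of $\text{int }W^s_{\text{loc}}(X^s_{p_*})$ by the fixed time $T$, a condition stable under small perturbation of $p$. Alternatively, one could argue geometrically in the spirit of Fig.~\ref{fig:varies}: near each transversal crossing time the crossing time depends continuously (indeed $C^1$, by the implicit function theorem) on $p$, while on the complementary subintervals the orbit of $y_{p_*}$ stays in a compact subset of $\text{int }N$ or of $M - N$ and so does the orbit of $y_p$ for $p$ near $p_*$; then $T_p$ is a finite union of intervals whose endpoints vary continuously with $p$, and $\tau_N(p)$ is the sum of their lengths.
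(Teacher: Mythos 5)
Your proof is correct, and while your first step (producing a time horizon $T$ with $\phi_t(y_p)\notin N$ for all $t\geq T$ and all $p$ in a neighborhood $J'$ of $\gamma(s_*)$, using $\gamma(s_*)\in R$ and the $C^1$ dependence of $W^s_{\text{loc}}(X^s_p)$, $\phi_T$, and $y$ on $p$) coincides with the paper's, the second half takes a genuinely different and more elementary route. The paper tracks each of the finitely many transversal crossing times of the orbit with $\partial N$ via stability of transversal intersections and the implicit function theorem, shows that for nearby $p$ no crossings appear or disappear, and then establishes a one-to-one correspondence between the closed intervals comprising $T_p$ and $T_{\gamma(s)}$ whose endpoints vary continuously, bounding $|\tau_N(p)-\tau_N(\gamma(s))|$ directly; this is essentially your "alternative" sketch at the end. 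You instead write $\tau_N(p)=\int_0^T \mathbb{1}_N(\phi(t,y_p,p))\,dt$ and apply dominated convergence, noting that the integrands converge pointwise at every $t$ for which $\phi(t,y_{p_*},p_*)\notin\partial N$ (since $N$ is closed and $\text{int}\,N$ is open), and that the exceptional set is finite by the transversality argument in the proof of Lemma~\ref{lem:defined}, hence null. This is shorter, avoids all the combinatorial bookkeeping of intervals, and in fact needs only that the set of times spent on $\partial N$ has measure zero rather than the full $C^1$ dependence of the crossing times on $p$; what it gives up is the finer structural information (locally constant number of crossings, $C^1$ crossing times) that the paper's argument produces as a byproduct, but none of that is needed for Lemma~\ref{lem:cont} or for Theorem~\ref{thm:time}, since Lemma~\ref{lem:limit} relies only on Lemma~\ref{lem:defined}. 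The only cosmetic point is that continuity of $\tau_N$ on the image $\gamma([0,1))$ should be checked against sequences $p_n\to p_*$ in that image rather than sequences $s_n\to s_*$ in $[0,1)$ when $\gamma$ is not injective, but your argument already does this, since it only uses membership of $p_n$ in the neighborhood $J'$ of $p_*$ in $J$.
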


\begin{proof}[Proof of Lemma~\ref{lem:cont}]
Fix $s \in [0,1)$.
To show continuity of $\tau_N$ over $\gamma([0,1))$ it suffices to show that
it is continuous over a neighborhood of $\gamma(s)$ in $\gamma([0,1))$.
Let $\epsilon > 0$.
The proof proceeds by first showing that there exists $T > 0$ such that
$T_{\gamma(s')} \subset [0,T]$ for $s'$ close to $s$ since
$\phi_T(y_{\gamma(s')}) \in W^s_{\text{loc}}(X^s_{\gamma(s')})$.
Then, by stability of transversal intersections
and the implicit function theorem, 
it is shown that for every intersection point of the orbit
of $y_{\gamma(s)}$ under $V_{\gamma(s)}$ with $\partial N$,
$s'$ close to $s$ implies that
there exists a unique intersection point of the orbit
of $y_{\gamma(s')}$ under $V_{\gamma(s')}$ with $\partial N$ near the
original intersection point.
It is then argued that for $s'$ close to $s$, no new intersection points appear,
only perturbations of the original intersection points.
As $T_p$ is equal to a finite union of closed intervals whose boundaries
are equal to these intersection points by Lemma~\ref{lem:defined},
it will be shown that there is a one-to-one correspondence between
the closed intervals in $T_{\gamma(s)}$ and the closed intervals in
$T_{\gamma(s')}$.
As $\tau_N(\gamma(s'))$ is equal to the sum of the lengths of these (finitely many) closed intervals, and their lengths vary continuously
with parameter value since their endpoints (the intersection times) vary
continuously with parameter value, it will follow that
$|\tau_N(\gamma(s'))-\tau_N(\gamma(s))| < \epsilon$ for $s'$ close to $s$.
This proof is illustrated with the aid of Fig.~\ref{fig:varies}.

First we show that $T_p \subset [0,T)$ for some $T > 0$ and $p$ close to
$\gamma(s)$.
As $\gamma(s) \in R$, the forward orbit of $y_{\gamma(s)}$ under $V_{\gamma(s)}$
converges to $X^s_{\gamma(s)}$.  
So, there exists $T > 0$ such that
$\phi_T(y_{\gamma(s)}) \in \text{int }W^s_{\text{loc}}(X^s_{\gamma(s)})$.
As $W^s_{\text{loc}}(X^s_p)$ is open and varies $C^1$ with $p \in J$,
and as $\phi_T(y_p)$ varies $C^1$ with $p \in J$,
there exists $\delta > 0$ such that
$p \in \gamma((s-\delta,s+\delta))$ implies that
$\phi_T(y_p) \in \text{int }W^s_{\text{loc}}(X^s_p)$.
Similarly, choosing $W^s_{\text{loc}}(X^s_{\gamma(s)})$ sufficiently small
implies that for $p \in \gamma((s-\delta,s+\delta))$,
$W^s_{\text{loc}}(X^s_p)$ is disjoint from $N$.
Hence, $p \in \gamma((s-\delta,s+\delta))$ implies that for any
$t \geq T$, $\phi_t(y_p) \in \text{int }W^s_{\text{loc}}(X^s_p)$ which is
disjoint from $N$.
Therefore, $T_p \subset [0,T)$.

Next, persistence of the original intersection points is shown under
small changes in parameter values.
By Lemma~\ref{lem:defined}, there are a finite number of intersections of
$\phi_{[0,T]}(y_{\gamma(s)})$ with $\partial N$.
Let $t_i$ denote the $i$th intersection time of the orbit of $y_{\gamma(s)}$
under $V_{\gamma(s)}$ with $\partial N$.
Note that $\{\phi_{[0,T]}(y_p)\}_{p \in \gamma([0,1])}$ is a $C^1$ continuous
family of compact embedded submanifolds with boundary in $M$ with
$\phi_{[0,T]}(y_{\gamma(s)})$ transverse to $\partial N$ with finitely many points of intersection. Points of transversal intersection between compact embedded
submanifolds with boundary persist under $C^1$ perturbations
\cite[Proposition A.3.16]{Ka99}. Therefore, it follows by the implicit function theorem
that for $\delta > 0$ sufficiently small, there exist open neighborhoods
$T_i \subset [0,T]$ and $C^1$ functions
$k_i:\gamma((s-\delta,s+\delta)) \to T_i$ such that the following holds.
For $p \in \gamma((s-\delta,s+\delta))$,
$\phi(\cdot,y_p,p)^{-1}\left(\phi_{T_i}(y_p) \cap \partial N\right)
= \{k_i(p)\}$
and $k_i(\gamma(s)) = t_i$ for each $i$.
In other words, for each $p \in \gamma((s-\delta,s+\delta))$
and for each $i$, there exists a unique intersection of $\phi_{[0,T]}(y_p)$
with $\partial N$ that occurs in the time interval $T_i$.

It is shown next that for $\delta$ sufficiently small,
and for $p \in \gamma((s-\delta,s+\delta))$, the number of intersection
times of $\phi_{[0,T]}(y_p)$ with $\partial N$ is constant and each intersection
time varies continuously with the parameter values.
Let $m$ be the number of intersections of $\phi_{[0,T]}(y_{\gamma(s)})$ with
$\partial N$.
Let $T_i' \subset [0,T]$ be a connected closed interval for each $i \in \{1,...,m\}$ such that
$T_i' \cap T_{i-1} \neq \emptyset$, $T_i' \cap T_i \neq \emptyset$,
and $T_i'$ does not contain any times at which
$\phi_{[0,T]}(y_{\gamma(s)})$ intersects $\partial N$. For completeness,
we let $T_0 = \{0\}$ and $T_{m+1} = \{T\}$.
For each $i$, as $\phi_{T_i'}(y_{\gamma(s)})$ and $\partial N$ are
compact and disjoint, and since $\phi(T_i',y_p,p)$ varies $C^1$ with
respect to $p$, shrinking $\delta$ further if necessary implies that
for $p \in \gamma((s-\delta,s+\delta))$,
$\phi_{T_i'}(y_p)$ is disjoint from $\partial N$ for all $i$.
More specifically, $\phi_{T_i'}(y_p) \subset N$ if and only if
$\phi_{T_i'}(y_{\gamma(s)}) \subset N$.
We can write $[0,T] = \bigcup_{i=1}^m T_i \bigcup_{i=1}^{m+1} T_i'$,
as shown in Fig.~\ref{fig:varies}.
Hence, for $p \in \gamma((s-\delta,s+\delta))$,
the only intersection times of $\phi_{[0,T]}(y_p)$ with $\partial N$
occur in $\bigcup_{i=1}^m T_i$.
But, by the choice of the $T_i$ above, for $p \in \gamma((s-\delta,s+\delta))$
this implies that the only intersection times of
$\phi_{[0,T]}(y_p)$ with $\partial N$ are $\bigcup_{i=1}^m k_i(p)$.
By the choice of $T$ above, this implies that
for $p \in \gamma((s-\delta,s+\delta))$,
the only intersection times of the orbit of $y_p$ under $V_p$ with $\partial N$
are $\bigcup_{i=1}^m k_i(p)$.
Hence, for $p \in \gamma((s-\delta,s+\delta))$, the number of
intersections of the orbit of $y_p$ under $V_p$ with $\partial N$ is constant.

Finally, we show that for $p \in \gamma((s-\delta,s+\delta))$,
there is a one-to-one correspondence between the closed intervals in
$T_p$ and the closed intervals in $T_{\gamma(s)}$, where the interval lengths
can be brought arbitrarily close to each other for sufficiently small $\delta$.
We will conclude that $|\tau_N(p)-\tau_N(\gamma(s))| < \epsilon$.
By Lemma~\ref{lem:defined}, as $(s-\delta,s+\delta) \subset [0,1)$,
for any $p \in \gamma((s-\delta,s+\delta))$,
$T_p$ consists of a finite union of closed intervals whose boundary points
are the intersection times.
Then for each $i$, $[k_{i-1}(p),k_i(p)] \subset T_p$ if and only if
$\phi_{T_i'}(y_p) \subset N$ since $T_i' \subset [k_{i-1}(p),k_i(p)]$,
$\phi_{[k_{i-1}(p),k_i(p)]}(y_p)$ is connected, and there are no intersections
of the orbit of $y_p$ under $V_p$ with $\partial N$ in the time interval
$(k_{i-1}(p),k_i(p))$.
Hence, for each $i$, 
$[k_{i-1}(p),k_i(p)] \subset T_p$ if and only if
$\phi_{T_i'}(y_p) \subset N$ if and only if
$\phi_{T_i'}(y_{\gamma(s)}) \subset N$
if and only if $[k_{i-1}(\gamma(s)),k_i(\gamma(s))] \subset T_{\gamma(s)}$.
Therefore, since the number of intersections is constant
over $p \in \gamma((s-\delta,s+\delta))$,
$T_p$ and $T_{\gamma(s)}$ consist of the same finite number of
corresponding closed intervals which differ only slightly in their endpoints,
the intersection times $\{k_i(p)\}_{i=1}^m$ and $\{k_i(\gamma(s)\}_{i=1}^m$, respectively.
Shrink $\delta$ such that for $p \in \gamma((s-\delta,s+\delta))$
and each $i$, $|k_i(p)-t_i| < \f{\epsilon}{2m}$, where $t_i = k_i(\gamma(s))$.
Since $T_p$ consists of the same number of corresponding
closed intervals as $T_{\gamma(s)}$, 
each closed interval in $T_p$ has length within
$\f{\epsilon}{m}$ of the length of the corresponding
interval in $T_{\gamma(s)}$.
For $p \in \gamma((s-\delta,s+\delta))$,
as $\tau_N(p)$ is equal to the sum of the lengths of the closed intervals
in $T_p$, there are $m$ such closed intervals in $T_p$, and the length
of each closed interval in $T_p$ is within $\f{\epsilon}{m}$
of the corresponding interval in $T_{\gamma(s)}$,
$|\tau_N(p)-\tau_N(\gamma(s))| < \epsilon$.
Hence, $\tau_N$ is continuous at $\gamma(s)$.
\end{proof}

\begin{proof}[Proof of Theorem~\ref{thm:time}]
Fix $p^* \in J_0$ and let $\gamma:[0,1] \to J$ be a $C^1$ path satisfying
Assumption~\ref{as:neigh2} and such that $\gamma(0) = p_0$, $\gamma(1) = p^*$,
and $\gamma([0,1)) \subset R$.
By Lemma~\ref{lem:defined}, $\tau_N:\gamma([0,1]) \to [0,\infty]$
is well-defined and $\tau_N(\gamma(1)) = \infty$.
By Lemma~\ref{lem:limit},
$\lim\limits_{\substack{p \in \gamma([0,1]) \\ p \to p^*}} \tau_N(p) = \infty = \tau_N(\gamma(1))$,
so $\tau_N$ is continuous at $p^* = \gamma(1)$.
By Lemma~\ref{lem:cont}, $\tau_N$ is continuous over $\gamma([0,1))$.
Hence, $\tau_N$ is continuous over $\gamma([0,1])$.
\end{proof}

\section{Conclusion}\label{sec:conc}

This work considers a weakly $C^1$ continuous family of vector fields on Euclidean space or on a compact Riemannian manifold. It shows that if the family possesses a stable equilibrium point, and if the vector field along its region of attraction (RoA) boundary satisfies Morse-Smale-like assumptions, then the RoA boundary is Hausdorff continuous (for a compact Riemannian manifold) or Chabauty continuous (for Euclidean space) with respect to parameter. This result builds on a decomposition of the RoA boundary into the union of the stable manifolds of the critical elements it contains. Furthermore, it is shown that this decomposition persists under small variations in parameter values. A recent complement to this work \cite{Fi18b} shows that the assumptions of this paper can be relaxed to Morse-Smale-like along with generic assumptions about a vector field at a single initial parameter value, so that it is not necessary to assume that no new nonwandering points can enter the RoA boundary under parameter perturbations.

These technical results are used to provide theoretical motivation for algorithms which numerically determine the recovery set $R$ by computing parameter values at points on its boundary $\partial R$. 
The algorithms proceed by identifying a controlling critical
element in the RoA boundary, and varying parameter values so as to maximize
the time spent by the trajectory in a neighborhood of that controlling critical
element. It is shown that the time spent by the trajectory in this neighborhood is
continuous with respect to parameter values, and approaches infinity as the
parameter values approach $\partial R$, thereby justifying the algorithmic
approach. Recently developed algorithms \cite{Fi18c} for numerically computing
boundary parameter values do not require prior knowledge of the
controlling critical element. Theoretical motivation of those algorithms again builds on the results developed in this paper.

\section{Acknowledgments}

Discussions with Ralf Spatzier and Wouter van Limbeek were very helpful
in developing the ideas in this paper. The authors gratefully acknowledge the contribution of the U.S.~National Science Foundation through grant ECCS-1810144.

\bibliographystyle{siamplain}
\bibliography{siam_ref}

\appendix

\section{Proof of Lemma~\ref{lem:tech1}}\label{ap:one}

Let $f$ and $D$ be defined as in the proof outline of Lemma~\ref{lem:tech1}. The proof is simple in the case that $W^s(X^i)$ has dimension zero. In that case, let $\hat{U} = W^u_{\text{loc}}(X^i)$. Then the interior of $\hat{U}$ is the open neighborhood that satisfies the claim of Lemma~\ref{lem:tech1}, so we may assume that the dimension of $W^s(X^i)$ is greater than zero.

We begin by constructing a $C^1$ continuous disk family along $D$ using the vector field $V$. This disk family will be extended to all of $W^u_{\text{loc}}(X^i)$ using the diffeomorphism $f$.
Let $A = \partial W^u_{\text{loc}}(X^i)$.
Then $A$ is a $C^1$ immersed submanifold of $W^u_{\text{loc}}(X^i)$ of codimension
one, and $\bigcup_{t < 0} \phi_t(A) = W^u_{\text{loc}}(X^i)$.
As the time-$t$ flow restricted to $W^u_{\text{loc}}(X^i)$ is a contraction for
any $t < 0$, for each $y \in D$ there exists a unique $x = x(y) \in A$
and $t = t(y) \geq 0$ such that $\phi(t(y),y) = x(y) \in A$.
By the tubular neighborhood theorem \cite[Theorem~6.24]{Lee13},
as discussed above in Section~\ref{sec:defs}, there exists a $C^1$ continuous family of pairwise disjoint disks $\{\tilde{D}(x)\}_{x \in A}$ centered along $A$ and transverse to $W^u(X^i)$. For each $y \in D$, let
$\tilde{D}(y) = \phi_{-t(y)}(\tilde{D}(x(y)))$.
Since $A$ is a $C^1$ immersed submanifold of $W^u(X^i)$ of codimension one,
there exists a real vector-valued function
$s$ defined on a neighborhood of $W^u_{\text{loc}}(X^i)$ in $W^u(X^i)$ such that
$s$ is a $C^1$ submersion and $A = s^{-1}(0)$. Then for any $x \in A$, $T_xA$ is equal to the kernel of $ds_x$ \cite[Proposition~5.38]{Lee13}.
Let $y \in D$ and choose $t$ such that $\phi(t,y) \in A$.
Then $s \circ \phi(t,y) = 0$.
Furthermore, $\pd{}{t} \big( s \circ \phi(t,y) \big) = ds_{\phi(t,y)}V(\phi(t,y))$.
Since the time-$t$ flow (for $t<0$) restricted to $W^u_{\text{loc}}(X^i)$ is
a contraction, $V$ is transverse to $A$, so $V_{\phi(t,y)} \not\in T_{\phi(t,y)}A$.
Thus, $V_{\phi(t,y)}$ is not in the kernel of $ds_{\phi(t,y)}$, so
$\pd{}{t} \big( s \circ \phi(t,y) \big) = ds_{\phi(t,y)}V(\phi(t,y)) \neq 0$.
Hence, by the implicit function theorem there exists a neighborhood
$N'$ of $y$ in $W^u_{\text{loc}}(X^i)$ and a $C^1$ function $t:N' \to \mathbb{R}$
such that $y' \in N'$ implies that $s \circ \phi(t(y'),y') = 0$ or,
equivalently, $\phi(t(y'),y') \in A$.
Thus, the function $t = t(y)$ is $C^1$, and $x = x(y) = \phi(t(y),y)$
is also $C^1$ since $\phi$ and $t(y)$ are $C^1$.
Therefore, by construction, the disk family
$\{\tilde{D}(y)\}_{y \in D}$ is $C^1$ continuous. As $D$ is compact, we may shrink $W^u_{\text{loc}}(X^i)$, shrink the disk family $\{\tilde{D}(x)\}_{x \in A}$, and choose $N$ an open neighborhood in $M$ such that $D \subset N \subset \overline{N} \subset D_\epsilon$
and for each $y \in D$, $\tilde{D}(y) \subset N$.

Next the $C^1$ continuous disk family along $W^u_{\text{loc}}(X^i)$ is
constructed for $f$ by backward iteration of the disk family
above, and it is shown to contain an open neighborhood of $X^i$.
If $X^i$ is an equilibrium point, let $x_0 = X^i$ and let $S = M$.
If $X^i$ is a periodic orbit, let $x_0 \in X^i$ such that $f$ is the
first return map for a cross section $S$ centered at $x_0$. It is possible to choose local $C^1$ coordinates in a neighborhood of $x_0$ such that, in these coordinates, $W^s_{\text{loc}}(x_0)$ is equal to $\mathbb{R}^s \times \{0\}$ and $W^u_{\text{loc}}(x_0)$ is equal to $\{0\} \times \mathbb{R}^u$, where $s+u=n$.  In particular,
let $Y$ be a neighborhood of $x_0$ in $S$ such that $Y$ can be expressed in the $C^1$ local coordinates of \cite[p.~80-81]{Pa82}, which aligns the local stable manifold with $\mathbb{R}^s \times \{0\}$ and the local unstable manifold with $\{0\} \times \mathbb{R}^u$.
In these coordinates, $Y = W^s_{\text{loc}}(x_0) \times W^u_{\text{loc}}(x_0)$, and we have $Y \subset \mathbb{R}^n = E^s \oplus E^u$, where $E^s$ and $E^u$ are the stable and unstable eigenspaces of $df_{x_0}$, respectively, and $W^s_{\text{loc}}(x_0) \subset E^s$, $W^u_{\text{loc}}(x_0) \subset E^u$. Next we extend the disk family $\{\tilde{D}(x)\}_{x \in D}$ to all of $W^u_{\text{loc}}(x_0)$. For each $x \in W^u_{\text{loc}}(x_0)- \{x_0\}$, let $l(x)$ be the smallest integer $l \geq 0$
such that $f^{l(x)}(x) \in D$. Such an intersection always exists by the construction of $D$ so $l(x)$ is finite. For $x \in W^u_{\text{loc}}(x_0)- \{x_0\}$, 
let $\tilde{D}^0(x) = \tilde{D}(f^{l(x)}(x))$, and for each $m \in 0,1,2, ..., l(x)$, let
$\tilde{D}^m(x)$ be the connected component of $f^{-1}(\tilde{D}^{m-1}(x)) \cap Y$ that
contains $f^{l(x)-m}(x)$, where we set $f^0(x) = x$.
Then let $\tilde{D}(x) = \tilde{D}^{l(x)}(x)$ and $\tilde{D}(x_0) = W^s_{\text{loc}}(x_0)$.
This gives a family of connected $C^1$ disks centered along $W^u_{\text{loc}}(x_0)$.

Next we show that the family $\{\tilde{D}(x)\}_{x \in W^u_{\text{loc}}(x_0)}$ is a $C^1$ continuous disk family. Let $y \in W^u_{\text{loc}}(x_0) - \{x_0\}$ and recall that $f^{l(y)}(y) \in D$. First we claim that there exists an open neighborhood $\overline{N}$ of $f^{l(y)}(y)$ in $W^u_{\text{loc}}(x_0) - \{x_0\}$ such that $\{\tilde{D}(x)\}_{x \in \overline{N}}$ is a $C^1$ continuous disk family.  If $f^{l(y)}(y) \in \text{int }D$ then let $\overline{N}$ be an open connected neighborhood of $f^{l(y)}(y)$ in $\text{int }D$.  As $\{\tilde{D}(x)\}_{x \in D}$ is a $C^1$ continuous disk family by construction and $\overline{N} \subset D$, it is clear that $\{\tilde{D}(x)\}_{x \in \overline{N}}$ is a $C^1$ continuous disk family as well.  So, suppose $f^{l(y)}(y) \in \partial D$.
Let $B_O$ and $B_I$ be the outer and inner topological boundaries of $D$, respectively. Then $\partial D = B_O \cup B_I$ and, by construction of $D$, $f(B_I) = B_O$.  First suppose $f^{l(y)}(y) \in B_O$. Note that by definition of $l(y)$ this implies that $f^{l(y)}(y) = y \in B_O$, with $l(y)=0$, since $f^{l(y)}(y) \in B_O$ implies that $f^{l(y)-1}(y) = f^{-1}(f^{l(y)}(y)) \in B_I \subset D$. Let $\overline{N}$ be an open connected neighborhood of $f^{l(y)}(y) = y$ in $D$. Then $\{\tilde{D}(x)\}_{x \in \overline{N}}$ is a $C^1$ continuous disk family since $\overline{N} \subset D$ and $\{\tilde{D}(x)\}_{x \in D}$ is a $C^1$ continuous disk family.  So, it suffices to consider $f^{l(y)}(y) \in B_I$.  Let $\overline{N}$ be a connected open neighborhood of $f^{l(y)}(y)$ in $W^u_{\text{loc}}(x_0) - \{x_0\}$.  Let $N_1 = \overline{N} \cap D$ and let $N_2 = \overline{N} \cap \left(W^u_{\text{loc}}(x_0) - \text{int }D\right)$.  Then $\overline{N} = N_1 \cup N_2$ and $N_1 \cap N_2 = \overline{N} \cap B_I$.  As $f(B_I) = B_O$ and $f$ is continuous, shrinking $\overline{N}$ if necessary implies that $f(N_2) \subset D$.  Then $\{\tilde{D}(x)\}_{x \in N_1}$ is a $C^1$ continuous disk family and $\{\tilde{D}(x)\}_{x \in f(N_2)}$ is a $C^1$ continuous disk family since $N_1, f(N_2) \subset D$ and $\{\tilde{D}(x)\}_{x \in D}$ is a $C^1$ continuous disk family. For $x' \in M$ and $\tilde{S} \subset M$, let $\pi_{x'}(\tilde{S})$ denote projection onto the connected component of $\tilde{S}$ containing $x'$ (noting that $\pi_{x'}(\tilde{S}) = \emptyset$ if $x' \not\in \tilde{S}$).
Note that for any $x \in N_2$, if $x \not\in D$ then $\tilde{D}(x) = \pi_x\big( f^{-1}(\tilde{D}(f(x))) \cap Y \big)$, and if $x \in D$ then $x \in B_I$ so $\tilde{D}(x) = f^{-1}(\tilde{D}(f(x)))$ by construction. Therefore, since $f^{-1}$ is a $C^1$ diffeomorphism, $\{\tilde{D}(x)\}_{x \in N_2} = \big\{\pi_{f^{-1}(x')}\big(f^{-1}(\tilde{D}(x')) \cap Y\big)\big\}_{x' \in f(N_2)}$ is a $C^1$ continuous disk family. 
Since $N_1 \cap N_2 = \overline{N} \cap B_I$, and
$f^{-1}(\tilde{D}(x)) = \tilde{D}(f^{-1}(x))$ for $x \in B_O$, the $C^1$ disk families $\{\tilde{D}(x)\}_{x \in N_1}$ and $\{\tilde{D}(x)\}_{x \in N_2}$ agree along their intersection. Hence, as each is a $C^1$ continuous disk family and they agree along their intersection, $\{\tilde{D}(x)\}_{x \in \overline{N}} = \{\tilde{D}(x)\}_{x \in N_1} \cup \{\tilde{D}(x)\}_{x \in N_2}$ is a $C^1$ continuous disk family.

Thus, for any $y \in W^u_{\text{loc}}(x_0) - \{x_0\}$, there exists an open neighborhood $\overline{N}$ of $f^{l(y)}(y)$ in $W^u_{\text{loc}}(x_0) - \{x_0\}$ such that $\{\tilde{D}(x)\}_{x \in \overline{N}}$ is a $C^1$ continuous disk family.  We claim that $f^{-l(y)}(\overline{N})$ is an open neighborhood of $y$ in $W^u_{\text{loc}}(x_0)-\{x_0\}$ such that $\{\tilde{D}(x)\}_{x \in f^{-l(y)}(\overline{N})}$ is a $C^1$ continuous disk family. To prove this, we will observe that $\{\tilde{D}(x)\}_{x \in f^{-m}(\overline{N})}$ is a $C^1$ continuous disk family for each $m \in \{0, 1,...,l(y)\}$. The proof proceeds by induction on $m$, and note that we have already proven the result for $m=0$. So, assume that $\{\tilde{D}(x)\}_{x \in f^{-m}(\overline{N})}$ is a $C^1$ continuous disk family for some $m \in \{0,1,...,l(y)-1\}$. Note that for any $x \in f^{-(m+1)}(\overline{N})$, $\tilde{D}(x) = \pi_x\big(f^{-1}(\tilde{D}(f(x))) \cap Y\big)$. Thus, since $f^{-1}$ is a $C^1$ diffeomorphism, $\{\tilde{D}(x)\}_{x \in f^{-(m+1)}(\overline{N})} = \big\{\pi_{f^{-1}(x')}\big(f^{-1}(\tilde{D}(x')) \cap Y\big)\big\}_{x' \in f^{-m}(\overline{N})}$ is a $C^1$ continuous disk family. As every point $y \in W^u_{\text{loc}}(x_0) - \{x_0\}$ has an open neighborhood, call it $N'$, such that $\{\tilde{D}(x)\}_{x \in N'}$ is a $C^1$ continuous disk family, $\{\tilde{D}(x)\}_{x \in W^u_{\text{loc}}(x_0) - \{x_0\}}$ is a $C^1$ continuous disk family.

It remains to consider continuity at $x_0$, where $\tilde{D}(x_0) = W^s_{\text{loc}}(x_0)$.
Since the family $\{\tilde{D}(x)\}_{x \in D}$ is $C^1$ continuous and transverse
to $W^u_{\text{loc}}(x_0)$, by the Inclination Lemma \cite{Pa69}, the family $\{\tilde{D}(f^{-m}(x))\}_{x \in D}$ converges uniformly in the $C^1$ topology to $W^s(x_0)$ as $m \to \infty$. By construction, for every $x \in W^u_{\text{loc}}(x_0) - \{x_0\}$, $\tilde{D}(x)$ is obtained by taking the connected component of $f^{-1}(\tilde{D}(f(x)) \cap Y$ that contains $x$.  Therefore, since $W^s(x_0) \cap Y = W^s_{\text{loc}}(x_0)$,
$\{\tilde{D}(x)\}_{x \in W^u_{\text{loc}}(x_0) - \{x_0\}}$ converges 
to $W^s_{\text{loc}}(x_0)$ as $x \to x_0$.
This shows that the family $\{\tilde{D}(x)\}_{x \in W^u_{\text{loc}}(x_0)}$ defined above is
$C^1$ continuous at $x_0$. Therefore, $\{\tilde{D}(x)\}_{x \in W^u_{\text{loc}}(x_0)}$ is a $C^1$ continuous disk family.

Thus, $\{\tilde{D}(x)\}_{x \in W^u_{\text{loc}}(x_0)}$ is a $C^1$
continuous family of disks transverse to $W^u_{\text{loc}}(x_0)$ and such that $x,y \in W^u_{\text{loc}}(x_0)$ with $x \neq y$ implies that
$\tilde{D}(x) \cap \tilde{D}(y) = \emptyset$.
Hence, there exists a $C^1$ injective function
$F:B_r^s \times W^u_{\text{loc}}(x_0) \to M$, where $B_r^s$ is the closed ball of radius $r > 0$ centered at the origin in $\mathbb{R}^s$, such that $F(B_r^s \times \{x\}) = \tilde{D}(x)$ for every $x \in W^u_{\text{loc}}(x_0)$.
Thus, since $F$ is a continuous injection between manifolds of the same
dimension, by invariance of domain \cite[Theorem~2B.3]{Ha01},
$\hat{U} = F(\text{int }B_r^s \times \text{int }W^u_{\text{loc}}(x_0))$
is an open neighborhood of $\text{int }W^u_{\text{loc}}(x_0)$ in $M$.
By construction, for every $x \in \hat{U} - W^s_{\text{loc}}(x_0)$,
its forward orbit intersects $\tilde{D}(y)$ for some $y \in D$ in finite positive time.
But by construction, $\tilde{D}(y) \subset N$ for all $y \in D$.
Hence, $\bigcup_{t < 0} \phi_t(N) \cup W^s_{\text{loc}}(x_0)$ contains
$\hat{U}$ an open neighborhood of $X^i$ in $M$.

\section{Proof of Lemma~\ref{lem:tech2}}\label{ap:two}


First note that $W^s_{\text{loc}}(X^i_p)$, $W^u_{\text{loc}}(X^i_p)$,
the tubular neighborhoods centered along them, and the flows are $C^1$ continuous with respect to parameter value, so that $p$ close to $p_0$ implies that the disk family $\{\tilde{D}(x)\}_{x \in D_p}$ of the perturbed vector field will be uniformly $C^1$-close to that of the original. Hence, $(D_p)_\epsilon$ will be $C^1$-close to $(D_{p_0})_\epsilon$. So, $J$ sufficiently small implies that there exists $N'$ open such that for $p \in J$, $D_p \subset N' \subset \overline{N'} \subset (D_p)_\epsilon$. Let $x_0(p)$ be defined in the natural way so that it is $C^1$ with respect to parameter, $x_0(p) = X^i_p$ if $X^i_p$ is an equilibrium point, and if $X^i_p$ is a periodic orbit then $x_0(p) \in X^i_p$. Construct the full disk family $\{\tilde{D}(x)\}_{x \in W^u_{\text{loc}}(x_0(p))}$ for the
perturbed vector field as in the proof of Lemma~\ref{lem:tech1}.

In the proof of the Inclination Lemma, a diffeomorphism $g$ sufficiently $C^1$-close to $f$ implies that the same bounds on $g$ and its partial derivatives will hold as for $f$. Hence, the uniform bounds on the inclinations (slopes) obtained for the transverse disk family $\{\tilde{D}(x)\}_{x \in D_{p_0}}$ can also be taken to apply to $\{\tilde{D}(x)\}_{x \in D_p}$ for $p$ sufficiently close to $p_0$.
In particular, for any $\delta > 0$ there exists $Z_N > 0$ such that $l \geq Z_N$ implies
that for any $x \in D_{p_0}$, $\tilde{D}(f_{p_0}^{-l}(x))$ is $\delta$ $C^1$-close to $W^s_{\text{loc}}(x_0(p_0))$ and for any $x' \in D_p$, $\tilde{D}(f_p^{-l}(x'))$ is $\delta$ $C^1$-close to $W^s_{\text{loc}}(x_0(p))$. Furthermore, for $p$ sufficiently close to $p_0$, $W^s_{\text{loc}}(x_0(p))$ is $\delta$ $C^1$-close to $W^s_{\text{loc}}(x_0(p_0))$.  Therefore, for every $x \in D_{p_0}$ and $x' \in D_p$, and for every $l \geq Z_N$, by the triangle inequality the distance from $\tilde{D}(f^{-l}_{p_0}(x))$ to $\tilde{D}(f^{-l}_p(x^\prime))$ is no more than the distance from $\tilde{D}(f^{-l}_{p_0}(x))$ to $W^s_{\text{loc}}(x(p_0))$ plus the distance from $W^s_{\text{loc}}(x(p_0))$ to $W^s_{\text{loc}}(x(p))$ plus the distance from $W^s_{\text{loc}}(x(p))$ to $\tilde{D}(f_p^{-l}(x'))$. Hence, $\tilde{D}(f_{p_0}^{-l}(x))$ is $3 \delta$ $C^1$-close to $\tilde{D}(f_p^{-l}(x'))$. As $\{\tilde{D}(x)\}_{x \in D_{p_0}}$ and $\{\tilde{D}(x)\}_{x \in D_p}$ are uniformly $C^1$-close and $Z_N$ is finite, for $p$ sufficiently close to $p_0$ we have that the two disk families $\{\tilde{D}(f_{p_0}^{-l}(x))\}_{x \in D_{p_0}}$ and $\{\tilde{D}(f_p^{-l}(x))\}_{x \in D_p}$ are uniformly $3 \delta$ $C^1$-close for $l \leq Z_N$. Hence, combining the above, we have that the disk families $\{\tilde{D}(x)\}_{x \in W^u_{\text{loc}}(x_0(p_0))}$ and $\{\tilde{D}(x)\}_{x \in W^u_{\text{loc}}(x_0(p))}$ are uniformly $3 \delta$ $C^1$-close.

Constructing $F_p$ analogously to the construction of $F$ in Lemma~\ref{lem:tech1}, this implies that $F_p(\text{int }B_r^s \times \text{int }W^u_{\text{loc}}(x_0(p)))$ has Hausdorff distance no greater than $3 \delta$ from $F_{p_0}(\text{int }B_r^s \times \text{int }W^u_{\text{loc}}(x_0(p_0)))$. In particular, this implies that $\delta > 0$ and $\hat{U}$ can be chosen sufficiently small such that for $J$ sufficiently small, $p \in J$ implies that $F_p(\text{int }B_r^s \times \text{int }W^u_{\text{loc}}(x_0(p))) \supset \hat{U}$. Hence, for every $x \in \hat{U} - W^s_{\text{loc}}(x_0(p))$, the forward orbit of $x$ intersects $N'$ in finite time.

\end{document}